\documentclass[11pt,reqno]{amsart}

\usepackage{amssymb, amscd, amsmath, amsthm, float, graphicx,
  enumerate, fullpage
}

\usepackage{mathrsfs}
\usepackage{psfrag}
\usepackage{fouriernc}\DeclareMathAlphabet{\mathcal}{OMS}{cmsy}{m}{n}

\usepackage[utf8]{inputenc}    
\usepackage[T1]{fontenc}

\newcommand{\hide}[1]{}

\usepackage[colorlinks]{hyperref}
\renewcommand{\MRhref}[2]{\href{https://mathscinet.ams.org/mathscinet-getitem?mr=#1}{#2}}

\DeclareMathOperator{\Ann}{Ann}
\DeclareMathOperator{\charac}{char}

\DeclareMathOperator{\codepth}{codepth}

\DeclareMathOperator{\cok}{cok}
\DeclareMathOperator{\cone}{{\mathsf{cone}}}

\DeclareMathOperator{\diag}{diag}
\DeclareMathOperator{\End}{End}
\DeclareMathOperator{\Ext}{Ext}

\DeclareMathOperator{\Hom}{Hom}

\DeclareMathOperator{\id}{id}

\DeclareMathOperator{\im}{im}

\DeclareMathOperator{\MCM}{MCM}
\DeclareMathOperator{\MF}{MF}

\DeclareMathOperator{\pd}{pd}

\DeclareMathOperator{\rank}{rank}

\DeclareMathOperator{\Spec}{Spec}

\DeclareMathOperator{\syz}{syz{}}
\DeclareMathOperator{\Tor}{Tor}

\DeclareMathOperator{\uHom}{\underline{\Hom}}
\DeclareMathOperator{\uEnd}{\underline{\End}}

\DeclareMathOperator{\uMCM}{\underline{\MCM}}
\DeclareMathOperator{\uMF}{\underline{\MF}}

\renewcommand{\phi}{\varphi}

\renewcommand{\tilde}{\widetilde}
\newcommand{\del}{\partial}
\renewcommand{\mod}{\operatorname{mod}}

\renewcommand{\geq}{\geqslant}

\renewcommand{\to}{\longrightarrow}
\newcommand\lto{{\longrightarrow}}

\newcommand\xto{\ -\negmedspace\negmedspace\negthinspace\xrightarrow}
\newcommand\onto{\twoheadrightarrow}

\newcommand{\dd}[1]{\frac{\partial}{\partial #1}}

\DeclareMathOperator{\jac}{jac}
\newcommand{\Dsg}{{\mathrm D}_{\mathrm{sg}}}
\newif\ifmusix
\IfFileExists{musixtex.tex}{\musixtrue}{}
\IfFileExists{musixtex.sty}{\musixtrue}{}
\ifmusix
  
  \newcommand{\dbl}{\text{\raisebox{.5ex}{\scalebox{1.3}{\mus 5}}}}%
\else
  \newcommand{\dbl}{\mathord{\text{\raisebox{.15ex}{\scalebox{.78}{$\times$}}}}}%
  \fi
  
\newcommand{\flfl}[1]{{#1}^{\flat\flat}}

\newcommand{\p}{{\mathfrak{p}}}
\newcommand{\q}{{\mathfrak{q}}}

\newcommand{\n}{{\mathfrak{n}}}

\newcommand{\ZZ}{{\mathbb Z}}

\newcommand{\cala}{{\mathcal A}}
\newcommand{\calb}{{\mathcal B}}

\newcommand{\calf}{{\mathcal F}}
\newcommand{\calg}{{\mathcal G}}

\newcommand{\cali}{{\mathcal I}}

\newcommand{\calp}{{\mathcal P}}

\newcommand{\calu}{{\mathcal U}}

\theoremstyle{plain}
\newtheorem{theorem}{Theorem}

\newtheorem{prop}[theorem]{Proposition}
\newtheorem{proposition}[theorem]{Proposition}

\newtheorem{lemma}[theorem]{Lemma}

\newtheorem{cor}[theorem]{Corollary}
\newtheorem{corollary}[theorem]{Corollary}
\newtheorem{question}[theorem]{Question}

\newtheorem*{thmA}{Theorem A}
\newtheorem*{thmB}{Theorem B}

\newtheorem*{conjecture*}{Conjecture}
\newtheorem*{theorem*}{Theorem}
\newtheorem*{prop*}{Proposition}

\theoremstyle{definition}
\newtheorem*{ack*}{Acknowledgements}
\newtheorem{defn}[theorem]{Definition}
\newtheorem{definition}[theorem]{Definition}

\newtheorem{remark}[theorem]{Remark}
\newtheorem*{notation*}{Notation}

\newtheorem{example}[theorem]{Example}

\newtheorem{sit}[theorem]{}

\numberwithin{theorem}{section}
\numberwithin{equation}{section}

   \makeatletter
   \newcommand{\thmlabel}[2]{\phantomsection\def\@currentlabel{#1}\label{#2}}
   \makeatother

\begin{document}

\title[Characteristic-free Kn\"orrer periodicity]{%
Characteristic-free Kn\"orrer periodicity}

\author[G.J. Leuschke]{Graham J.\ Leuschke}
\address{Dept.\ of Mathematics, Syracuse University,
Syracuse NY 13244, USA}
\urladdr{\href{http://www.leuschke.org/}{http://www.leuschke.org/}}
\email{gjleusch@syr.edu}


\date{\today}

\keywords{matrix factorizations, Kn\"orrer periodicity, maximal Cohen-Macaulay modules,
  characteristic two, hyperbolic extension}

\subjclass[2020]{Primary: 13C14; 
  Secondary: 13D09, 
  13D02, 
  18G80, 
  13H10, 
  16G50, 
  16G60, 
}

\begin{abstract}
  We show that the Kn\"orrer functor induces an equivalence $\uMCM(R) \simeq \uMCM(A)$ of
  stable categories of maximal Cohen-Macaulay modules, where $R = S/(f)$ is a complete
  hypersurface ring and $A = S[\![u,v]\!]/(f+uv)$ is the hyperbolic extension of $R$. No
  hypothesis is placed on the residue field or on its characteristic, $f$ need not define an
  isolated singularity, and $S$ need not contain a field. This is in contrast to the iterated
  double branched cover $R^{\sharp\sharp} = S[\![z,w]\!]/(f+z^2+w^2)$, which is stably
  equivalent to $R$ only in characteristic not equal to $2$. We also give a direct construction of a
  free resolution identifying $\syz_2^A(M)$ with the image of $M \oplus \syz_1^R M$ under the
  functor, and an example in characteristic two in which the corresponding statement for the
  double branched cover $S[\![z]\!]/(f+z^2)$ fails.
\end{abstract}

\dedicatory{Dedicated to the memory of Ragnar-Olaf Buchweitz}

\maketitle

\section{Introduction}\label{sect:intro}

Let $(S, \n, k)$ be a complete regular local ring, let $f$ be a non-zero element of $\n^2$, and
let $R = S/(f)$ be the associated hypersurface ring. The \emph{double branched
  cover}
\[
  R^\sharp = S[\![z]\!]/(f+z^2)
\]
plays a central role in the classification of hypersurface rings of finite Cohen-Macaulay type,
that is, those having only a finite number of isomorphism classes of indecomposable maximal
Cohen-Macaulay modules.

Write $\MCM(R)$ for the category of maximal Cohen-Macaulay (MCM)
$R$-modules and $\uMCM(R)$ for its stable category, obtained by killing the morphisms factoring
through a free module. The surjection $R^\sharp \onto R$ defined by killing the class of $z$
induces functors
\[
  \begin{split}
    (-)^\sharp &\colon \MCM(R) \to \MCM(R^\sharp)\,, \qquad M^\sharp = \syz_1^{R^\sharp} M \\
    (-)^\flat &\colon \MCM(R^\sharp) \to \MCM(R)\,, \qquad N^\flat = N/zN
  \end{split}
\]
where $\syz_1$ denotes the syzygy operator. The relationships between these functors are
pleasingly concrete and symmetric~\cite[Props.~8.15, 8.18]{Leuschke-Wiegand:BOOK}: as long as
$M$ and $N$ are stable (having no non-zero free direct summands) we have
\begin{equation}\label{eq:815}
  \left({M^\sharp}\right)^\flat \cong M \oplus \syz_1^R M
\end{equation}
and \emph{if in addition $\charac k \neq 2$},
\begin{equation}\label{eq:818a}
  \left({N^\flat}\right)^\sharp \cong N \oplus \syz_1^{R^\sharp} N\,.
\end{equation}
In characteristic $2$, the second statement is known to fail, see Section~\ref{sect:char2}.

The control that~\eqref{eq:815} and~\eqref{eq:818a} give over MCM $R$- and $R^\sharp$-modules allows
one to prove that finite Cohen-Macaulay type descends from $R^\sharp$ to $R$ in general, and
ascends from $R$ to $R^\sharp$ in characteristic not $2$.

A feature of the double branched cover construction is that doing it twice yields an even more
agreeable situation. Set 
\[
  R^{\sharp\sharp} = \left(R^{\sharp}\right)^{\sharp} = S[\![z,w]\!]/(f+z^2+w^2)\,.
\]
Then Kn\"orrer's periodicity theorem is as follows.
\begin{theorem}[{\cite[Theorem 3.1]{Knorrer}}]
  \label{thm:Knorrer}
  Suppose $S = k[\![x_0, \dots, x_d]\!]$ with $k$ algebraically closed of
  characteristic different from two. Then there is an equivalence of stable categories
  \[
    \uMCM(R) \simeq \uMCM\left(R^{\sharp\sharp}\right)\,,
  \]
  induced by an explicit operation on matrix factorizations.
\end{theorem}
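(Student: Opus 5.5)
Since $\charac k \neq 2$ and $k$ is algebraically closed, $-1$ has a square root $i \in k$. The change of variables $u = z + iw$, $v = z - iw$ rewrites $z^2 + w^2 = uv$, so $R^{\sharp\sharp} \cong S[\![u,v]\!]/(f+uv) =: A$, the hyperbolic extension of $R$. The plan is to prove the statement for $A$ directly, and that step uses no hypothesis on $k$ at all.

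By Eisenbud's theorem, $\uMCM(R)$ is equivalent to the homotopy category $\uMF_S(f)$ of reduced matrix factorizations, and likewise $\uMCM(A) \simeq \uMF_{S[\![u,v]\!]}(f+uv)$. I define the Kn\"orrer functor on a factorization $(\phi,\psi)$ of $f$ by
\[
  \Phi(\phi,\psi) = \left( \begin{pmatrix} \phi & -v \\ u & \psi \end{pmatrix}, \begin{pmatrix} \psi & v \\ -u & \phi \end{pmatrix} \right),
\]
a tensor product of $(\phi,\psi)$ with the factorization $(u,v)$ of $uv$. The product of the two matrices is $(f+uv)\cdot\mathrm{id}$. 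The construction is visibly additive and functorial on morphisms and homotopies, and it preserves reducedness. On modules, $\Phi$ sends $M=\cok\phi$ to the cokernel of the first matrix.

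Full faithfulness comes from a spectral-sequence-free computation of stable Hom via $\Ext$. I would first show that $\Phi(M)$ restricted to $A/(v) = S[\![u]\!]/(f)$, followed by killing $u$, recovers $M \oplus \syz^R_1 M$; this is a Kn\"orrer-type analogue of~\eqref{eq:815}, and there $u$ and $v$ play the roles that the factorization of $z^2$ played. Next I compute
\[
  \uHom_A(\Phi M,\Phi N) \cong \uHom_{R}(M,N)
\]
by writing the Hom complex of matrix factorizations. The Hom complex of $\Phi M$ and $\Phi N$ is the Hom complex of $M,N$ over $S$, tensored with the Koszul-type complex $\End((u,v))$ over $k[\![u,v]\!]$. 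The latter is homotopy equivalent to $k[\![u,v]\!]/(u,v)=k$, because $(u,v)$ is a regular sequence in $S[\![u,v]\!]$. This exhibits the canonical map as a quasi-isomorphism.

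Essential surjectivity is the main obstacle. Given a reduced factorization $(\alpha,\beta)$ of $f+uv$, I would consider $N=\cok\alpha$ and restrict it to $A/(v)\cong S[\![u]\!]/(f)$. Since $v$ is regular on $N$, $N/vN$ is MCM over $S[\![u]\!]/(f)$. A finitely generated MCM module over $S[\![u]\!]/(f)$ that is killed by ... is not available, so I instead reduce mod $(u,v)$: $M := N/(u,v)N$ is MCM over $R$. Then I aim to show $N \oplus \syz^A_1 N \cong \Phi(M)$ up to free summands, mirroring~\eqref{eq:818a}. The key point is that the map $u \cdot$ on $N/vN$ has a canonical splitting because $uv$ acts as $-f$, which is zero on $R$. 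No division by $2$ arises, unlike the $z^2$ case, where one must split $z$ into $\frac{1}{2}$'s.

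Granting this, every $N$ is a summand of an object in the image of $\Phi$. Because $\Phi$ is fully faithful and the categories are idempotent complete (the rings are complete, so Krull--Schmidt holds), idempotents on $\Phi(M)$ lift to idempotents on $M$, and $N$ lies in the essential image. Finally, I transport the equivalence back through the isomorphism $A \cong R^{\sharp\sharp}$.
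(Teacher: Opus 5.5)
Your reduction to $A=S[\![u,v]\!]/(f+uv)$ via $u=z+iw$, $v=z-iw$ is how the paper connects this statement to Theorem~\ref{thm:A}. Your Hom-complex argument for full faithfulness is a legitimate alternative to the paper's route, which imports Solberg's block-matrix computation through Lemma~\ref{lem:solberg} and Theorem~\ref{thm:solberg}. In your argument, $\End((u,v))$ is a folded Koszul complex on $u,v$ whose homology is $k$ in even degree, spanned by the class of the identity. Say ``quasi-isomorphic'' rather than ``homotopy equivalent'', since $k$ is not free over $k[\![u,v]\!]$. The induced map on Hom complexes is still a quasi-isomorphism, because the Koszul direction has length two and $u,v$ is regular on $\Hom_S(X,X')\otimes_S B$.

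The genuine gap is density. Your ``key point'' is that multiplication by $u$ on $N/vN$ has a canonical splitting because $uv$ acts as $-f$. This is not an argument, and read literally it is false. The sequence $0\to N/vN\xrightarrow{u}N/vN\to N/(u,v)N\to 0$ never splits for $N\neq 0$: $N/vN$ has no $u$-torsion, while $N/(u,v)N\neq 0$ is killed by $u$. Also, $uv=-f$ is just the defining relation of $A$ and carries no information about $N$.

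The missing idea is a \emph{stable} statement. Differentiating $\alpha\beta=(f+uv)\cdot 1$ with respect to $v$ gives $(\partial_v\alpha)\beta+\alpha(\partial_v\beta)=u\cdot 1$. So multiplication by $u$ on every MCM $A$-module factors through a free module, and likewise for $v=\partial_u(f+uv)$ (Lemma~\ref{lem:jacobian}, Corollary~\ref{cor:uv-kill}). This, and not the absence of a factor $\tfrac12$ in some base change, is where $f+uv$ differs from $f+z^2$, whose $z$-derivative is $2z$.

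Even granting that, your target identity $(N/(u,v)N)^{\dbl}\cong N\oplus\syz^A_1N$ is stronger than what is needed. In the paper it is obtained only \emph{after} the equivalence (Corollary~\ref{cor:summand}). A direct proof is Question~\ref{quest:matrices}, which admits no universal formula (Proposition~\ref{prop:no-formula}). The explicit clearing in Theorem~\ref{thm:clearing} assumes homogeneity, which is itself deduced from Theorem~\ref{thm:A}.

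The paper instead works in $\Dsg(A)$ with $N/(u,v)N$ regarded as an $A$-module, not with its image under the functor. Two steps are needed:
\begin{enumerate}
\item Since $u,v$ is regular on $N$, the Koszul complex $K(u,v;N)$ is quasi-isomorphic to $N/(u,v)N$. Since its maps vanish stably, the iterated cones split, giving $N/(u,v)N\cong N\oplus(\Sigma N)^2\oplus\Sigma^2N$ in $\Dsg(A)$.
\item An MCM $R$-module $X$ regarded over $A$ lies in the thick image of $(-)^{\dbl}$. Either take the Koszul complex on $X^{\dbl}$, whose homology is $X\oplus\syz^R_1X$ by Lemma~\ref{lem:HF}, or combine $\syz^A_2X\cong(X\oplus\syz^R_1X)^{\dbl}$ from Theorem~\ref{thm:B} with $X\cong\syz^A_2X$ in $\Dsg(A)$ (Lemma~\ref{lem:Rperfect}).
\end{enumerate}
Only with both steps in place does your concluding idempotent-lifting argument, which matches Lemma~\ref{lem:thick}, have an object to act on.
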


In Kn\"orrer's proof the ``explicit operation on matrix factorizations'' is most easily
expressed after a change of variables: set $u = z+iw$, $v = z-iw$ so that $z^2+w^2 = uv$. The
resulting ring is isomorphic to $R^{\sharp\sharp}$ if $i = \sqrt{-1}$ exists and $2$ is a unit,
but is genuinely different in characteristic $2$, so we give it its own notation:
\[
  R^{\dbl} = S[\![u,v]\!]/(f+uv)
\]
and call it the \emph{hyperbolic extension} of $R$. 

It is important to note that Kn\"orrer's result is about $f+z^2+w^2$, not $f+uv$. Beyond the
change of variables, Kn\"orrer's proof also uses~\eqref{eq:818a}, which is false in
characteristic $2$. Even further,~\cite[Lemma 1.3]{Knorrer} chooses a convergent power series
over $S$ for $(1+x)^{-1/2}$, which is invalid in characteristic $2$.

What is known without a hypothesis on the characteristic is the
following. Solberg~\cite[Theorem 4.5]{Solberg} proved, for $k$ an arbitrary field, that
$R = k[\![z_1 ,\dots, z_r]\!]/(f)$ has finite CM type if and only if $R^{\dbl}$ does; his argument
runs through Auslander-Reiten theory and therefore needs $f$ to define an isolated singularity,
and it needs finiteness of the AR quiver in an essential way. Yoshino~\cite{Yoshino:1998}
observed that the Kn\"orrer functor is given by the tensor product with the fixed matrix
factorization $(u,v)$ of $uv$, and proved, for $S = K[\![x_0,\dots,x_d]\!]$ with $K$ an
arbitrary field, that it carries indecomposable stable MCM $R$-modules to indecomposable
$A$-modules, and that $M^{\dbl} \cong (M')^{\dbl}$ forces $M' \cong M$ or
$M' \cong \syz^R_1 M$~\cite[Lemma 2.12, Theorem 3.7]{Yoshino:1998}. Orlov proves an equivalence
of singularity categories for $f$ and $f + xy$ over regular schemes over a field~\cite[Thm
2.1]{Orlov:2004}; his paper does not address the characteristic, and we do not rely on it.
Dyckerhoff~\cite[\S 5.3]{Dyckerhoff:2011} proved a characteristic-free equivalence of the
associated categories of matrix factorizations, using compact generation by the stabilized
residue field; his hypothesis, condition (B) of~\cite[\S 3.2]{Dyckerhoff:2011}, also implies
isolated singularity.

Here is the first main result of this paper.

\begin{thmA}\thmlabel{A}{thm:A}
  Let $(S, \n, k)$ be a complete regular local ring, let $0 \neq f \in \n^2$, and set
  $R = S/(f)$ and $A = R^{\dbl} = S[\![u,v]\!]/(f+uv)$. Then the Kn\"orrer functor of
  Def.~\ref{defn:functor} 
  \[
    (-)^{\dbl} \colon \uMCM(R)  \lto \uMCM(A)
  \]
  is an equivalence of triangulated categories. 
\end{thmA}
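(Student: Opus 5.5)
We work throughout with matrix factorizations, via Eisenbud's equivalence $\uMCM(R)\simeq \uMF_S(f)$ and $\uMCM(A)\simeq \uMF_{S[\![u,v]\!]}(f+uv)$. The Kn\"orrer functor sends $(\phi,\psi)$ to the tensor product with the fixed factorization $(u,v)$ of $uv$:
\[
  (\phi,\psi)^{\dbl} = \left( \begin{pmatrix} \phi & u \\ -v & \psi\end{pmatrix},\ \begin{pmatrix} \psi & -u \\ v & \phi\end{pmatrix}\right).
\]
This operation is additive, commutes with shift (the shift swaps $\phi$ and $\psi$, up to sign and a permutation of blocks), and carries mapping cones to mapping cones. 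Hence it is a triangle functor, and it remains to prove that it is fully faithful and essentially surjective. No step below uses $\sqrt{-1}$, the inverse of $2$, or any isolated singularity hypothesis.

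\emph{Full faithfulness.} Let $T = S[\![u,v]\!]$. For factorizations $X, Y$ of $f$ the Hom complex is the $\ZZ/2$-graded complex $\Hom(X,Y)$ over $S$, whose homology is the stable Hom. The Hom complex of $X^{\dbl}, Y^{\dbl}$ is
\[
  \Hom_T(X,Y)\otimes_T \End(K),
\]
where $K=(u,v)$ is the Koszul-type factorization of $uv$ over $k$-free variables. I would then compute $H(\End_{T}(K))$ as a $\ZZ/2$-graded complex over $T$ with differential built from $u$ and $v$. It is quasi-isomorphic, as a dg algebra over $S[\![u,v]\!]$, to $T/(u,v)=S$ placed in even degree: $\End(K)$ is the Koszul complex on $u,v$ folded into a $\ZZ/2$-grading, so it resolves $T/(u,v)$. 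Tensoring with the complex of free $T$-modules $\Hom_T(X_T,Y_T)$ and using that $u,v$ act nilpotently up to homotopy then gives
\[
  H\Hom(X^{\dbl},Y^{\dbl}) \cong H\Hom_S(X,Y).
\]
One must check that this isomorphism is the map induced by the functor. I would do this by writing an explicit contracting argument: every morphism $X^{\dbl}\to Y^{\dbl}$ is homotopic to one of the form $\alpha\otimes 1$ after reducing modulo $u,v$ by a filtration/completeness argument, using that $T$ is $(u,v)$-adically complete.

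\emph{Essential surjectivity.} This is the main obstacle. I would first show that the essential image is a thick subcategory. It is triangulated because the functor is a fully faithful triangle functor, and closure under summands follows because $\uMCM(R)$ is idempotent complete: $R$ is complete, so Krull--Schmidt holds. Next I would show that every MCM $A$-module $N$ lies in the thick closure. Set $\bar N = N/vN$, an MCM module over $A/(v)\cong S[\![u]\!]/(f)=R[\![u]\!]$. Then $N$ is obtained from $\bar N$ by a pushforward along $A\onto A/(v)$, and one has a triangle relating $N$, its syzygy, and the $A$-module $\syz^A(\bar N)$. I would then compute that $\syz_2^A$ of $M\otimes_R R[\![u]\!]/(u)$ (that is, of $M$ viewed over $A$ via $A\onto R$) is $M^{\dbl}\oplus(\syz_1^R M)^{\dbl}$, i.e.\ the direct free resolution promised in the abstract, and that $R[\![u]\!]$-modules are filtered by $u$ with subquotients that are $R$-modules. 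Combining these, every $A$-module of the form $\syz^A(L)$ with $L$ an $R[\![u]\!]$-module lies in the image. Finally I would show that $N$ is a summand of such a syzygy up to shift: for example, $N$ is a summand of $\syz^A_2(N/(u,v)N)$, by analogy with \eqref{eq:815}, using a Koszul-type argument in $u,v$ that works in every characteristic because $uv$ is used and not $z^2$. The summand step is where I expect the real difficulty, and I would try first to verify it directly on matrix factorizations by an explicit change of basis.
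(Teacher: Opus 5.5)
Your reduction to matrix factorizations, the triangle-functor step, and the thickness of the essential image agree with the paper. (Your normalization of the functor is $\left(\syz_1^R(-)\right)^{\dbl}\cong\syz_1^A\circ(-)^{\dbl}$ after a block permutation, which is harmless.)

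For full faithfulness you take a different route. The paper imports Solberg's block-matrix computation through Lemma~\ref{lem:solberg}. Your route can be made to work, but ``$\End(K)$ is quasi-isomorphic to $S$, so tensor'' is not valid by itself for $\ZZ/2$-graded complexes. What rescues it is that $\End(K)$ is a genuinely $\ZZ$-graded complex read mod $2$: it is the Koszul complex on $u,v$ with $1_K$ in degree $0$, and its differential strictly lowers Koszul degree. Hence
\[
  \Hom\left(X^{\dbl},Y^{\dbl}\right)\cong\Hom_S(X,Y)\otimes_S\mathrm{K}\left(u,v;S[\![u,v]\!]\right)
\]
has a finite filtration by Koszul degree. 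Let $H$ be the homology of $\Hom_S(X,Y)$. By flatness, the $E_2$ page of the associated spectral sequence is the Koszul homology of $u,v$ on $H\otimes_S S[\![u,v]\!]$, which is $H$ in degree $0$. So the augmentation to $\Hom_S(X,Y)$ is a quasi-isomorphism. Since the augmentation is a left inverse of $(\alpha,\beta)\mapsto(\alpha,\beta)^{\dbl}$, the resulting isomorphism is the one induced by the functor, and no separate completeness argument is needed. This gives a self-contained, more conceptual proof of full faithfulness than citing Solberg.

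The genuine gap is essential surjectivity, at exactly the step you flag. What is missing is an idea, not a computation: multiplication by $u$ and by $v$ is null-homotopic on \emph{every} matrix factorization $(\Phi,\Psi)$ of $f+uv$. Differentiating $\Phi\Psi=(f+uv)\cdot 1$ gives
\[
  \left(\dd{u}\Phi\right)\Psi+\Phi\left(\dd{u}\Psi\right)=v\cdot 1\,,\qquad
  \left(\dd{v}\Phi\right)\Psi+\Phi\left(\dd{v}\Psi\right)=u\cdot 1\,,
\]
and likewise with $\Phi,\Psi$ exchanged. So $u$ and $v$ annihilate $\uHom_A$ and act as zero on every object of $\Dsg(A)$ (Lemma~\ref{lem:jacobian}, Corollary~\ref{cor:uv-kill}). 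Without this, your steps do not close:
\begin{itemize}
\item Your Theorem~B computation does correctly put the MCM $R$-modules into the image.
\item The triangle $N\xrightarrow{v}N\to N/vN\to\Sigma N$ only builds $N/vN$ out of $N$. To recover $N$ as a summand of $\syz_1^A(N/vN)$, you need $0\to\syz_1^AN\to\syz_1^A(N/vN)\to N\to0$ to split. Its class is the image of $v\cdot 1_N$ in $\Ext^1_A(N,\syz_1^AN)$, so splitting is exactly the vanishing of $v$ stably.
\item The $u$-adic filtration of an MCM $R[\![u]\!]$-module is infinite, since $u$ is a nonzerodivisor on it. So it places nothing in a thick subcategory.
\item ``$N$ is a summand of $\syz_2^A(\flfl N)$'' is the analogue of \eqref{eq:818a}, not of \eqref{eq:815}. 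It is the upward direction, which fails for $f+z^2$ in characteristic $2$ (Section~\ref{sect:char2}) precisely because $\dd{z}(f+z^2)=2z$. Replacing $z^2$ by $uv$ is not itself an argument.
\item No change of basis built universally from $f,u,v,\Phi,\Psi$ can do the job (Proposition~\ref{prop:no-formula}). The explicit clearing in Proposition~\ref{prop:four-blocks} is assembled from the four derivative homotopies above.
\end{itemize}

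Once you have the homotopies, the paper's argument is short and makes the detour through $N/vN$ unnecessary. For a stable MCM $A$-module $N$, the Koszul complex on $u,v$ with coefficients in $N$ is quasi-isomorphic to $\flfl N$. In $\Dsg(A)$ it is an iterated cone of zero maps, so $\flfl N\cong N\oplus(\Sigma N)^2\oplus\Sigma^2N$ there. Since $\flfl N$ is an MCM $R$-module, it lies in the thick essential image, and hence so does its summand $N$.
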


The fact that $(-)^{\dbl}$ is fully faithful is already in~\cite{Solberg}. The hypotheses of
the main theorem there include equicharacteristic, finite CM type, and an isolated singularity,
but the proof of full faithfulness is a direct computation with block matrices which holds in full
generality. This seems to be overlooked in the literature. We describe Solberg's proof more
fully in Remark~\ref{rem:solberg-audit}.

Density of $(-)^{\dbl}$ is proved in Section~\ref{sect:density}. It rests on the following
elementary observation (Lemma~\ref{lem:jacobian}), which again already appears in the
literature: for a matrix factorization $(\Phi,\Psi)$ of an element $h$ in a ring $T$, any
derivation $\del$ of $T$ applied to the equation $\Phi\Psi = h \cdot 1$ yields a null-homotopy
for multiplication by $\del h$. Hence the Jacobian ideal of $f+uv$ annihilates the stable
homomorphism groups $\uHom_A$. It follows that the Koszul complex on $u$ and $v$ with coefficients
in a MCM $A$-module $N$ is built from maps that vanish in the stable category, so that
$N$ is a direct summand of $\flfl{N}$.  See Prop.~\ref{prop:density}.

Combining the lemma with a criterion of the author and Dugas \cite[Lemma
2.2]{Dugas-Leuschke:2017} gives a one-variable statement worth recording separately, and in the
form closest to Theorem~\ref{thm:Knorrer}.

\begin{corollary}[Corollary~\ref{cor:one-variable}]
  Let $A = S[\![u,v]\!]/(f+uv)$ and let $N$ be a MCM $A$-module. Then for $z$ either of
  $u$ or $v$,
  \[
    \syz_1^A(N/zN) \cong  N \oplus \syz_1^A(N)\,,
  \]
  with no hypothesis on $\charac k$.
\end{corollary}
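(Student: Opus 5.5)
We prove the isomorphism $\syz_1^A(N/zN) \cong N \oplus \syz_1^A(N)$ by realizing $N$ and $\syz_1^A N$ as the two ends of a short exact sequence, and then showing that this sequence splits because its connecting class is zero in the stable category. By the symmetry $u \leftrightarrow v$ it suffices to treat $z = u$. We may assume $N$ has no free summands; a free summand $A$ contributes $\syz_1^A(A/uA) \cong A$ on the left and $A \oplus 0$ on the right.

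\emph{Step 1: the sequence.} Since $N$ is MCM and $u$ is a non-zerodivisor on $A$, $u$ is $N$-regular. Hence $N/uN$ is an MCM module over $A/uA \cong S[\![v]\!]/(f)$, and it has depth $\dim A - 1$ over $A$. Take a free cover $F \onto N$ with kernel $\syz_1^A N$. The composite $F \onto N \onto N/uN$ has kernel $K = \syz_1^A(N/uN)$, up to free summands, and $K$ is MCM. The snake lemma applied to $uN \subseteq N$ gives a short exact sequence
\[
0 \to \syz_1^A N \to K \to uN \to 0,
\]
and $uN \cong N$ because $u$ is $N$-regular. This is the pullback of $0\to \syz_1^A N\to F\to N\to 0$ along $u\colon N\to N$.

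\emph{Step 2: the extension class.} The class of this pullback in $\Ext^1_A(N,\syz_1^A N) \cong \uHom_A(\syz_1^A N,\syz_1^A N)$ is $u$ times the class of the universal extension. So it corresponds to multiplication by $u$ on $\syz_1^A N$, viewed stably. This identification is exactly the criterion of \cite[Lemma 2.2]{Dugas-Leuschke:2017}: for $x$ regular on $A$ and $N$, $\syz_1^A(N/xN)\cong N\oplus\syz_1^A N$ holds if and only if $x$ annihilates $\uEnd_A(N)$, equivalently $\uEnd_A(\syz_1^A N)$. I would quote that lemma directly rather than redo the Ext bookkeeping.

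\emph{Step 3: vanishing.} Lemma~\ref{lem:jacobian} says the Jacobian ideal of $h = f+uv$ annihilates $\uHom_A$. Apply it to the derivation $\partial/\partial v$ of $S[\![u,v]\!]$ (with $S$-linear coefficients). This gives $\partial h/\partial v = u$, so multiplication by $u$ is null-homotopic on every matrix factorization of $h$. Hence $u$ annihilates $\uEnd_A(N)$, and by Step 2 the sequence splits, which gives the displayed isomorphism with no assumption on $\charac k$.

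The main point needing care is Step 3 when $S$ does not contain a field. There we must check that $\partial/\partial v$ is a genuine derivation of $S[\![u,v]\!]$ killing $S$. It is: the formal partial derivative is defined coefficientwise over any base ring, and the product rule holds. So the Jacobian lemma applies as stated.
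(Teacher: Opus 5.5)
Your argument is correct and is essentially the paper's proof. The paper applies Lemma~\ref{lem:speed-kills} (the Dugas--Leuschke criterion, whose proof is your pullback-along-$z$ sequence) together with Corollary~\ref{cor:uv-kill}, which uses Lemma~\ref{lem:jacobian} and $\partial(f+uv)/\partial v = u$, $\partial(f+uv)/\partial u = v$ to show that $u$ and $v$ kill $\Ext^1_A(N,-)$. You phrase the criterion through $\uEnd_A(\syz_1^A N) \cong \Ext^1_A(N,\syz_1^A N)$ rather than through annihilation of $\Ext^1_A(N,Y)$ for all $Y$, which is equivalent here because $A$ is Gorenstein and $N$ is MCM; your reduction to the stable case is harmless but unnecessary.
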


Our second main result is a module-theoretic identity which does not pass through the stable
category at all. It is proved in Section~\ref{sect:syzygy} by explicitly constructing a free
resolution, and it is logically independent of Theorem~\ref{thm:A}.

\begin{thmB}\thmlabel{B}{thm:B}
  Let $M$ be a MCM $R$-module with no free direct summands, and regard $M$ as a
  $A$-module through $A \onto A/(u,v) = R$. Then
  \[
    \syz_2^A(M) \cong  M^{\dbl} \oplus \left(\syz_1^R M\right)^{\dbl}
    = \left(M \oplus \syz_1^R M\right)^{\dbl}\,,
  \]
  and neither side has a free direct summand.
\end{thmB}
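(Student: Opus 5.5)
Write $M = \cok(\phi)$ for a reduced matrix factorization $(\phi,\psi)$ of $f$ over $S$, with $\phi,\psi$ square of size $n$ and entries in $\n$; this is possible since $M$ has no free summands. I take the Kn\"orrer functor of Def.~\ref{defn:functor} to send $(\phi,\psi)$ to the tensor product with $(u,v)$, namely
\[
  \Phi = \begin{pmatrix} \phi & -v\cdot 1 \\ u\cdot 1 & \psi \end{pmatrix}, \qquad
  \Psi = \begin{pmatrix} \psi & v\cdot 1 \\ -u\cdot 1 & \phi \end{pmatrix},
\]
so that $\Phi\Psi = \Psi\Phi = (f+uv)\cdot 1_{2n}$ and $M^{\dbl} = \cok(\Phi)$. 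Likewise $(\syz_1^R M)^{\dbl} = \cok(\Psi')$, where $\Psi'$ is the same construction applied to $(\psi,\phi)$. Up to signs and a permutation of blocks, $\Psi' \sim \Psi$. Hence the target of Theorem~\ref{thm:B} is $\cok(\Phi)\oplus\cok(\Psi)$.

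The plan is to write down the start of an explicit $A$-free resolution of $M = A/(u,v)\otimes_R M$ and read off the second syzygy. Set $B = S[\![u,v]\!]$. Over $B$, $M$ is resolved by the tensor product of the Koszul complex $K(u,v;B)$ with $0\to B^n \xrightarrow{\phi} B^n$. This gives a $B$-resolution of length $3$:
\[
  0 \to B^n \to B^{2n}\oplus B^n \to B^{n}\oplus B^{2n} \to B^n \to M \to 0 .
\]
Since $f+uv$ annihilates $M$, I then apply the standard Shamash/Eisenbud construction (a system of higher homotopies for multiplication by $f+uv$) to pass to an $A$-free resolution. Concretely, multiplication by $f+uv$ on this complex is null-homotopic, with homotopy built from $\psi$ (for the $f$ part) and from the Koszul contractions (for $uv$: $u$ times one Koszul homotopy in the $v$-direction). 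Using these homotopies, the Shamash resolution becomes $2$-periodic from degree $\geq 2$ on, with ranks $n,\ 3n,\ 4n,\ 4n,\dots$.

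The key step is to show that the second differential, suitably changed by a basis change, is the block matrix $\Phi\oplus\Psi$ (up to signs and permutations). Then $\syz_2^A M$ is the cokernel of the third differential, which is $\Psi\oplus\Phi$, and this cokernel is $\cok(\Phi)\oplus\cok(\Psi) \cong M^{\dbl}\oplus(\syz_1^R M)^{\dbl}$. I would do this by computing $d_3$ and $d_4$ explicitly from the homotopies and checking that $d_3d_4 = (f+uv)$ on the periodic part. Because $\phi$ and $\psi$ are fixed, the entries are determined. I will organize the $4n$ generators as two copies of $B^{2n}$ so that $d_3 = \diag(\Psi,\Phi)$ after the permutation. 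Exactness needs no separate argument, since the Shamash construction is a resolution by general theory. As a cross-check, rank: $\syz_2^A M$ has rank $2n$ over $A$ as an MCM module of a $2n$-periodic factorization, which matches $\cok(\Phi)\oplus\cok(\Psi)$.

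The main obstacle is this bookkeeping: finding the right basis in which the periodic differential of the Shamash resolution splits as $\Phi\oplus\Psi$ rather than as a mixed $4n\times 4n$ matrix. If the splitting is not visible directly, I would instead show that the periodic $4n\times4n$ matrix factorization is equivalent to $(\Phi,\Psi)\oplus(\Psi,\Phi)$ by exhibiting explicit invertible matrices over $B$. For the final clause, both $\Phi$ and $\Psi$ have all entries in the maximal ideal $(\n,u,v)$, since $\phi,\psi$ are reduced. A reduced matrix factorization has a cokernel with no free summands, so neither side has a free direct summand. The same observation shows that the resolution constructed is minimal.
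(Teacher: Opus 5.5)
Your route is sound, and it is closer to the paper than it may look. The paper's $\alpha$, $\beta$, $\gamma$ are, up to ordering of basis vectors and signs, exactly $d_1$, $d_2$, $d_3$ of your Eisenbud--Shamash complex. That complex is built from $F=K(u,v)\otimes(\tilde G\xrightarrow{\phi}\tilde F)$, with $de_1=u$, $de_2=v$, and the homotopy
\[
\sigma(a\otimes b)=(-1)^{|a|}a\otimes\psi b+v\,e_1a\otimes b .
\]
This $\sigma$ satisfies $d\sigma+\sigma d=f+uv$ and $\sigma^2=0$, so no higher homotopies are needed. The real difference is how exactness is obtained:
\begin{itemize}
\item The paper proves $\ker\alpha=\im\beta$ by hand, by lifting to $B$ and using injectivity of $\phi$ over $S$ and regularity of $u,v$. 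It then gets only a surjection $\cok\gamma\onto\syz_2^AM$, and upgrades it to an isomorphism by a rank count plus torsion-freeness of MCM modules.
\item You cite Shamash/Eisenbud~\cite[\S 7]{Eisenbud:1980}, which holds for any nonzerodivisor $g$ on $B$ with $gM=0$, with no characteristic hypothesis. This replaces both verifications and explains where $\beta$ and $\gamma$ come from; the paper's version is self-contained.
\end{itemize}

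However, you have not carried out the step that carries the content, and your primary plan for it fails. The square differential that must split is $d_3$, not $d_2$, which is $3n\times 4n$. Order the columns of $D_3=F_3\oplus F_1$ as $e_1e_2\tilde G,\ e_2\tilde F,\ e_1\tilde F,\ \tilde G$ and the rows of $D_2=F_2\oplus F_0$ as $e_1e_2\tilde F,\ e_2\tilde G,\ e_1\tilde G,\ \tilde F$. Then
\[
d_3=\begin{bmatrix}\phi&v&0&0\\ u&-\psi&0&0\\ -v&0&-\psi&v\\ 0&v&u&\phi\end{bmatrix}
=\begin{bmatrix}\Phi'&0\\ L&\Psi'\end{bmatrix}.
\]
Already for $n=1$ the support of this matrix is connected, so no permutation of bases makes it block diagonal. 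You need your fallback: a unipotent change of basis
\[
\left[\begin{smallmatrix}1&0\\-X&1\end{smallmatrix}\right]d_3\left[\begin{smallmatrix}1&0\\-Y&1\end{smallmatrix}\right]=\diag(\Phi',\Psi'),
\]
which requires $L=X\Phi'+\Psi'Y$. This holds with $X=\left[\begin{smallmatrix}0&0\\1&0\end{smallmatrix}\right]$ and $Y=\left[\begin{smallmatrix}0&0\\-1&0\end{smallmatrix}\right]$.

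This is the same mechanism as the paper's key step. Since $v=\partial_u(f+uv)$, Lemma~\ref{lem:jacobian} makes $v$ null-homotopic, and the homotopies $\partial_u$ of the Kn\"orrer matrices are constant with entries $0,\pm1$. Finally, $\Phi'\diag(1,-1)=\Phi$ and $-\Psi'\diag(1,-1)=\Psi$, so $\syz_2^AM\cong\cok\Phi\oplus\cok\Psi$, as required.

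Your rank cross-check should come from $0\to\syz_2^AM\to A^{3n}\to\syz_1^AM\to 0$, giving $3n-n=2n$. The size of the factorization alone does not determine the rank. The argument for the final clause about free summands is correct and matches the paper's.
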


Theorem~\ref{thm:B} is an independent confirmation of the periodicity statement;
the only scalars in its proof are $\pm 1$. It also reconciles the definition of the functor
$(-)^{\dbl}$ with that of $(-)^\sharp = \syz_1^{R^\sharp}$, contradicting the claim
in~\cite[p.~160]{Leuschke-Wiegand:BOOK} that ``\dots there seems to be no nice correspondence
between MCM $R$-modules and MCM $R^{\sharp\sharp}$-modules.''

Section~\ref{sect:matrices} continues the explicit matrix computations, addressing the question of
exhibiting Proposition~\ref{prop:density} as a concrete matrix equation. We prove first that no
universal formula exists: there is no polynomial in the input data that clears the necessary
off-diagonal entries (Prop.~\ref{prop:no-formula}). We then formulate the derivation/homotopy
argument used in Theorem~\ref{thm:A} in matrix language to give an independent proof of
Theorem~\ref{thm:B}.

Section~\ref{sect:char2} contains a minimal example of the failure of~\eqref{eq:818a} and
compares it with Theorems~\ref{thm:A} and~\ref{thm:B}.

\begin{ack*}
  This paper grew out of an extended exchange with the LLM Claude (Opus 5) about whether the
  characteristic ${} \neq 2$ hypothesis in Kn\"orrer periodicity is essential. The model pointed out the
  key computations in Lemma~\ref{lem:jacobian} and Prop.~\ref{prop:density}, namely that
  partial derivatives form null-homotopies of $u$ and $v$ on the nose and therefore the Koszul
  complex splits in the singularity category. The model also compiled an initial draft. The
  author is responsible for all statements, proofs, and errors.

  I was asked the initial question --- Is there a direct characteristic-free proof of the
  equivalence $\MF(f) \simeq \MF(f+uv)$? --- by Osamu Iyama on November 12, 2017, during a very
  pleasant and productive visit to Nagoya. My friend and collaborator Ragnar-Olaf Buchweitz had
  just passed away the day before. I have returned to the question many times over the years since,
  always with the feeling that Ragnar would have had deeper insight into the question than I
  was able to. I am pleased that the resulting proof uses so many of Ragnar's favorite tools. 
\end{ack*}

\begin{notation*}
  All rings are commutative and Noetherian and all modules are finitely generated unless stated
  otherwise. We write $\syz^\Gamma_i(X)$ for an $i$th syzygy of $X$ over $\Gamma$; it is
  uniquely defined up to isomorphism if $\Gamma$ is a local ring. A MCM module is \emph{stable}
  if it has no non-zero free direct summand; a matrix factorization is \emph{reduced} if all of
  its entries lie in the maximal ideal, and this happens if and only if its cokernel is stable
  \cite[8.35]{Leuschke-Wiegand:BOOK}. We use
  $\Dsg(\Gamma) = {\mathrm D}^{\mathrm b}(\mod \Gamma)/\operatorname{perf}(\Gamma)$ for the
  singularity category.
\end{notation*}

\section{Matrix factorizations and the Kn\"orrer functor}\label{sect:notation}

Throughout the paper, $(S, \n, k)$ is a complete regular
local ring of dimension $d+1$, $f$ is a non-zero element of $\n^2$, and we set
\[
  R = S/(f)\,, \qquad B = S[\![u,v]\!]\,, \qquad A = R^{\dbl} = B/(f+uv)\,.
\]
We sometimes write $g = f + uv$ for brevity, so that $A = B/(g)$. No
hypothesis is placed on $k$ anywhere below; in particular we do not assume that $S$ contains a
coefficient field.

Recall~\cite[8.2]{Leuschke-Wiegand:BOOK} that a \emph{matrix factorization} of $f$ over
$S$ is a pair $(\phi \colon G \to F, \psi \colon F \to G)$ of homomorphisms of free
$S$-modules of the same finite rank with
\[
  \psi\phi = f\cdot 1_G \qquad\text{and}\qquad \phi\psi = f\cdot 1_F\,,
\]
and that $\cok(\phi,\psi) := \cok \phi$ is then a MCM $R$-module. Eisenbud's theorem \cite[\S
6]{Eisenbud:1980} says that $\cok$ induces a bijection between the reduced matrix
factorizations of $f$ up to equivalence and the stable MCM $R$-modules up to isomorphism; see
Lemma~\ref{lem:hmf-umcm} for a precise statement.

It is well-known that reversing the pair $(\phi, \psi)$ gives the syzygy over $R$:
\begin{equation}
  \label{eq:reverse}
  \cok(\psi,\phi)  \cong \syz_1^R \cok(\phi,\psi)\,.
\end{equation}
In particular $\syz_2^R \cong \id$ on stable MCM $R$-modules.

\begin{definition}[{\cite[8.31]{Leuschke-Wiegand:BOOK}}]
  \label{defn:functor}\ \\ 
  \begin{enumerate}
  \item For $M = \cok(\phi \colon G \to F, \psi \colon F \to G)$ a MCM $R$-module, set
    \[
      M^{\dbl} =  \cok \left(
        \begin{bmatrix} \phi & -v\,1_F \\ u\,1_G & \psi \end{bmatrix},
        \begin{bmatrix} \psi & v\,1_G \\ -u\,1_F & \phi \end{bmatrix} \right)\,,
    \]
    a MCM $A$-module.
    
  \item For $N$ a MCM $A$-module, define
    \[
      N^{\flat\flat} = N/(u,v)N\,,
    \]
    a MCM $R$-module.
  \end{enumerate}
\end{definition}

The assertion that $M^{\dbl}$ is a MCM $A$-module is equivalent to the easily-verified fact that
the displayed matrices in the definition do indeed comprise a matrix factorization of
$f+uv$. The assertion that $\flfl N$ is MCM over $R$ follows from the fact that $u,v$ form a
regular sequence on $A$ and thus on every MCM $A$-module.

\begin{remark}
  \label{rmk:normalizations}
  The functor $(-)^{\dbl}$ appears previously in both \cite[\S 3]{Knorrer} and \cite[\S
  2]{Solberg}. Kn\" orrer's version uses the matrix
  $\left[\begin{smallmatrix} u & \psi \\ \phi & -v\end{smallmatrix}\right]$, which is an
  elementary block-row operation away from our definition, so is isomorphic to ours. Solberg's
  functor differs by a syzygy; we will compare the two more precisely in
  Lemma~\ref{lem:solberg} below, since we require Solberg's results for the proof of Theorem~\ref{thm:A}.
\end{remark}

For later use we record here the rank of $M^{\dbl}$.

\begin{lemma}
  \label{lem:rank}
  Let $n$ be the size of the reduced matrix factorization $(\phi,\psi)$. Then $\left(\cok
    \phi\right)^{\dbl}$ has rank $n$ as an $A$-module.
\end{lemma}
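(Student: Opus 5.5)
To compute the rank of $M^{\dbl}$ with $M = \cok(\phi,\psi)$, I will use the standard fact that for a matrix factorization $(\Phi,\Psi)$ of a nonzerodivisor $g$ over the regular ring $B$, the rank of $\cok\Phi$ over $A = B/(g)$ is read off from the determinant. First, $A$ is a domain only when $f+uv$ is irreducible, so to make ``rank'' meaningful I would work at the minimal primes of $A$, or use the determinantal formula for the rank over each component. The cleanest route is this: since $\Phi\Psi = g\cdot 1$ with $\Phi$ a $2n\times 2n$ matrix, we get $\det\Phi\cdot\det\Psi = g^{2n}$, and for each prime factor $p$ of $g$ the length of $\cok\Phi$ localized at $(p)$ equals the $p$-adic valuation of $\det\Phi$. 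So the key step is to compute $\det\Phi$ for
\[
\Phi = \begin{bmatrix} \phi & -v\,1_F \\ u\,1_G & \psi\end{bmatrix}.
\]

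To compute this determinant, I will work over the fraction field of $B$, where $u$ is invertible, and take the Schur complement with respect to the lower-left block. The row/column manipulation is:
\[
\begin{bmatrix} 1 & -u^{-1}\phi \\ 0 & 1\end{bmatrix}\Phi = \begin{bmatrix} 0 & -v - u^{-1}\phi\psi \\ u & \psi\end{bmatrix} = \begin{bmatrix} 0 & -u^{-1}(f+uv)1_F \\ u\,1_G & \psi\end{bmatrix}.
\]
Its determinant is $\pm u^n\cdot(-u^{-1}g)^n = \pm g^n$, where the sign depends only on $n$. Hence $\det\Phi = \pm g^n$, a unit times $g^n$. By symmetry $\det\Psi = \pm g^n$ as well, which is consistent with $\det\Phi\cdot\det\Psi = g^{2n}$.

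From $\det\Phi = \pm g^n$ I conclude that $\cok\Phi$ has rank $n$ over $A$, in the sense that it is locally free of rank $n$ at each minimal prime of $A$. To see this, localize at a minimal prime $\mathfrak p$ of $A$; it corresponds to a height-one prime $(p)$ of $B$ with $p \mid g$. Over the DVR $B_{(p)}$, the Smith normal form of $\Phi$ has diagonal entries dividing $g$, since $\Phi\Psi = g$. If $g$ is reduced, these entries are $1$ or $g$ up to units, and the number of entries equal to $g$ is $v_p(\det\Phi)/v_p(g) = n$. So $(\cok\Phi)_{\mathfrak p} \cong A_{\mathfrak p}^n$. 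If $g$ is not reduced, I would fall back on the definition of rank via $\det\Phi = g^{\rank}$ (Eisenbud's convention), which gives the same answer.

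The main obstacle is exactly this point: what ``rank'' means when $f+uv$ is not a domain or is not reduced. I expect $f+uv$ is in fact always reduced, and often prime: it is linear in $u$, so any factorization would have to be of the form $(a+ub)(c+\dots)$ with $v\mid$ one factor, which quickly forces triviality. I would try to show irreducibility of $f+uv$ in $S[\![u,v]\!]$ first, since then $A$ is a domain and the determinant argument gives the rank directly. The reducedness hypothesis of the matrix factorization plays no role beyond fixing $n$ as its size.
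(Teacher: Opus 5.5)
Your proof is correct and is essentially the paper's: the paper computes $\det\Phi=\det(\phi\psi+uv\,1_n)=g^n$ directly from the determinant formula for block matrices whose lower blocks commute, so it never needs to invert $u$, and then cites Eisenbud's Prop.~5.6 to read off the rank, whereas you check that step by hand via Smith normal form over $B_{(p)}$. Your worry about reducedness is moot: Lemma~\ref{lem:normal} shows that $A$ is a normal domain, so $g=f+uv$ is prime and your localization argument applies directly at the unique minimal prime of $A$.
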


\begin{proof}
  By~\cite[Prop.~5.6]{Eisenbud:1980}, the rank of $\cok \phi$ as an $R$-module is equal to the
  largest $p$ with $\det \phi = r f^p$ for some $r \in R$.  Applying this to $\left(\cok
    \phi\right)^{\dbl}$ over $A$, we must compute the order of $g = f+uv$ in the determinant of
  the block matrix $\left[\begin{smallmatrix} \phi & -v \\ u &
      \psi\end{smallmatrix}\right]$. Since the blocks all commute, this is $\det \left(\phi\psi
    + uv 1_n\right)= \det(g 1_n) = g^n$.
\end{proof}

The next lemma is two-thirds of \cite[Lemma 8.32]{Leuschke-Wiegand:BOOK}. Both parts are
elementary but we include the proofs to verify that they are characteristic-free. 

\begin{lemma}
  \label{lem:HF}
  Let $M$ be a MCM $R$-module. Then
  \begin{enumerate}[\quad(i)]
  \item \label{item:HF} $\flfl{\left(M^{\dbl}\right)} \cong M \oplus \syz_1^R M$; and
  \item \label{item:Fsyz} $\left(\syz_1^R M\right)^{\dbl} \cong \syz_1^A\left(M^{\dbl}\right)$.
  \end{enumerate}
\end{lemma}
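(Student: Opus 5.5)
Both parts are direct manipulations of the defining matrix factorization in Definition~\ref{defn:functor}. Throughout I write $M = \cok(\phi\colon G\to F,\ \psi\colon F\to G)$ with $(\phi,\psi)$ a matrix factorization of $f$ over $S$. A free summand $R$ of $M$ corresponds to the trivial factorization $(f,1)$ (or $(1,f)$). Since both sides of (i) and (ii) are additive in $M$, I may reduce to the case where $(\phi,\psi)$ is reduced, or simply carry the general factorization along; the computations below do not care. No scalars other than $\pm1$ will appear, so the argument is characteristic-free.

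For (i), $M^{\dbl}$ is the cokernel over $A$ (equivalently over $B$) of
\[
\Phi=\begin{bmatrix}\phi & -v\,1_F\\ u\,1_G & \psi\end{bmatrix}\colon (G\oplus F)\otimes_S B\to (F\oplus G)\otimes_S B .
\]
Tensoring with $B/(u,v)=S$ is right exact, so $\flfl{(M^{\dbl})} = M^{\dbl}/(u,v)M^{\dbl}$ is the cokernel of $\Phi\otimes_B B/(u,v)$. Setting $u=v=0$ kills the off-diagonal blocks and leaves $\left[\begin{smallmatrix}\phi&0\\0&\psi\end{smallmatrix}\right]$ over $S$. Its cokernel is $\cok\phi\oplus\cok\psi = M\oplus \cok(\psi,\phi)$, and by \eqref{eq:reverse} this is $M\oplus\syz_1^R M$. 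The only thing to check is that the cokernel is the same whether it is computed over $B$ or over $A$, which holds because $g$ already acts as zero on $\cok\Phi$.

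For (ii), by \eqref{eq:reverse} over $A$, $\syz_1^A(M^{\dbl})$ is the cokernel of the reversed pair
\[
\Psi=\begin{bmatrix}\psi & v\,1_G\\ -u\,1_F & \phi\end{bmatrix},\qquad \Phi .
\]
On the other hand, $(\syz_1^RM)^{\dbl}$ is the functor applied to $(\psi,\phi)$, namely the cokernel of $\left[\begin{smallmatrix}\psi & -v\,1_G\\ u\,1_F & \phi\end{smallmatrix}\right]$. These two matrices differ only by the signs of the off-diagonal blocks. Conjugating by $\diag(1_G,-1_F)$ on the target and $\diag(1_F,-1_G)$ on the source (or the analogous diagonal sign changes) carries one to the other. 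Hence the cokernels are isomorphic.

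The one point of care is the use of \eqref{eq:reverse} over $A$ when $(\Phi,\Psi)$ may not be reduced. Even then the statement holds, because the exact sequence $0\to\cok\Psi\to A^{2n}\to\cok\Phi\to0$, coming from the periodic resolution, exhibits $\cok\Psi$ as a first syzygy up to free summands. So the main obstacle is not mathematical but bookkeeping: keeping track of the sign conventions so that the diagonal sign matrices match up exactly.
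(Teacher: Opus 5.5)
Your proposal is correct and is essentially the paper's proof. For (i) you reduce the presentation matrix of $M^{\dbl}$ modulo $(u,v)$ to get $\diag(\phi,\psi)$ and invoke \eqref{eq:reverse}; for (ii) you identify $\syz_1^A(M^{\dbl})$ with the cokernel of the reversed matrix and match it to the Kn\"orrer image of $(\psi,\phi)$ via the sign matrices $\diag(1_G,-1_F)$ and $\diag(1_F,-1_G)$, which is exactly the paper's ``conjugation by $\diag(1,-1)$.'' Your extra remarks about computing the cokernel over $B$ versus $A$, and about non-reduced factorizations, are harmless refinements of points the paper leaves implicit.
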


\begin{proof}
  For~\eqref{item:HF}, write $M^{\dbl} = \cok_B \Phi$ with $\Phi = \left[
    \begin{smallmatrix} \phi & -v \\ u & \psi\end{smallmatrix}\right]$. Tensoring the
  presentation $B^{2n} \xrightarrow{\Phi} B^{2n} \to M^{\dbl} \to 0$ with $S = B/(u,v)$ is
  right exact, so
  \[
    M^{\dbl}/(u,v)M^{\dbl} =\cok \begin{bmatrix} \phi & 0 \\ 0 & \psi\end{bmatrix}
    = \cok(\phi,\psi) \oplus \cok(\psi,\phi)  \cong  M \oplus \syz_1^R M\,.
  \]
  For~\eqref{item:Fsyz}, we have
  $\syz_1^A\left(M^{\dbl}\right) = \cok \left[\begin{smallmatrix} \psi & v \\ -u &
      \phi\end{smallmatrix}\right]$, while $\left(\syz_1^R M\right)^{\dbl} = \cok
  \left[\begin{smallmatrix} \psi & -v \\ u & \phi\end{smallmatrix}\right]$. Conjugating by the
  invertible (over any ring) matrix $\diag(1,-1)$ transforms the second into the first, so the
  cokernels are isomorphic. 
\end{proof}

Part~\eqref{item:Fsyz} of the Lemma above says that $(-)^{\dbl}$ commutes with the syzygy
functors over $R$ and $A$. In order to show that $(-)^{\dbl}$ is actually an exact functor on
the triangulated category $\uMCM(R)$, we must recall the basics of the triangulated structure
via that on the homotopy category of matrix factorizations $\uMF_S(f)$.

\begin{defn}
  \label{defn:hmf}
  Let $X = (\phi\colon G \to F, \psi\colon F \to G)$,
  $X' = (\phi'\colon G' \to F', \psi'\colon F' \to G')$ be matrix factorizations of an element
  $f$ of a ring $S$. 
  \begin{enumerate}[\quad(1)]
  \item A \emph{morphism} $(\alpha, \beta) \colon X \to X'$ is a pair of homomorphisms of free modules
    $\alpha \colon F \to F'$ and $\beta \colon G \to G'$ satisfying $\alpha \phi = \phi' \beta$
    and $\beta \psi = \psi'\alpha$. 
  \item The morphism $(\alpha, \beta)$ is \emph{null-homotopic} if there exist $s \colon F \to
    G'$ and $t \colon G \to F'$ with
    \[
      \alpha = \phi' s + t \psi \qquad \text{and} \qquad \beta = \psi' t + s \phi\,.
    \]
    
  \item The \emph{suspension} of $X$ is $\Sigma X = (-\psi, -\phi)$ and that of $(\alpha,
    \beta)$ is $(\beta, \alpha)$.
  \item The \emph{cone} of $(\alpha, \beta)$ is
    \[
      \cone(\alpha,\beta) = \left(
        \left[\begin{matrix} \phi' & \alpha \\ 0 & -\psi\end{matrix}\right]\,,
        \left[\begin{matrix} \psi' & \beta \\ 0 & -\phi\end{matrix}\right]
      \right)\,.
    \]
    It is again a matrix factorization of $f$. 
  \item The morphisms
    \[
      \begin{split}
        \iota = (\iota_1, \iota_2) &= \left(
        \left[\begin{matrix} 1 \\ 0 \end{matrix}\right]\,,
        \left[\begin{matrix} 1 \\ 0\end{matrix}\right]
      \right)
      \colon X' \to \cone (\alpha, \beta) \\
      \pi = (\pi_1, \pi_2) &= \left(
        \left[\begin{matrix} 0 & -1 \end{matrix}\right]\,,
        \left[\begin{matrix} 0 & -1\end{matrix}\right]
      \right)
      \colon \cone(\alpha,\beta) \to \Sigma X
      \end{split}
    \]
    give the \emph{standard triangle} $X \xto{(\alpha,\beta)} X' \xto{\iota}
    \cone(\alpha,\beta) \xto{\pi} \Sigma X$. 
  \item The distinguished triangles in $\uMF_S(f)$ are those isomorphic to a standard
    triangle. 
  \end{enumerate}
\end{defn}

\begin{lemma}
  [{\cite[Thm.~4.4.1]{Buchweitz:2021}, \cite[\S 6]{Eisenbud:1980}}]
  \label{lem:hmf-umcm}
  With the definitions above, $\uMF_S(f)$ is a triangulated category, and $\cok$ induces an
  equivalence of triangulated categories $\uMF_S(f) \simeq \uMCM(R)$. \qed
\end{lemma}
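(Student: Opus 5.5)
The plan is to split the statement into its two halves: first show that $\uMF_S(f)$ with the listed suspension, cones and standard triangles satisfies the axioms (TR1)–(TR4), and then show that $\cok$ is a triangle equivalence onto $\uMCM(R)$. For the first half I would not verify the axioms by hand. Instead, I would identify $\uMF_S(f)$ with the homotopy category of $2$-periodic ``curved'' complexes of free $S$-modules,
\[
  \cdots \to G \xrightarrow{\phi} F \xrightarrow{\psi} G \xrightarrow{\phi} F \to \cdots.
\]
Under this identification the given notions become the usual ones. Morphisms $(\alpha,\beta)$ are chain maps. The pair $(s,t)$ is a chain homotopy, and the relations $\alpha=\phi's+t\psi$, $\beta=\psi't+s\phi$ are exactly the homotopy equations. $\Sigma$ is the shift, which is why the signs $(-\psi,-\phi)$ appear. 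The cone is the mapping cone. The axioms then follow verbatim from the classical proof for $K(\mathcal A)$, because that argument only uses the identities $d^2=0$ and block-matrix manipulations. Here $d^2=f\cdot 1$ instead of $0$, but $f$ is central, so every identity involving $d^2$ is replaced by one involving $f$ that cancels in the same way. In particular, the checks that the cone is a matrix factorization, that $\iota$ and $\pi$ are morphisms, and the octahedral axiom are all sign-only computations. They involve only $\pm1$, so they are characteristic-free, which is the point of quoting \cite{Buchweitz:2021} here. Alternatively, one could show directly that $\MF_S(f)$ is a Frobenius exact category (componentwise split exact sequences, with projective-injectives the trivial factorizations $(1,f)$ and $(f,1)$) whose stable category is $\uMF_S(f)$, and then invoke Happel's theorem.

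For the second half, I would first check that $\cok$ is well defined on homotopy classes. A morphism $(\alpha,\beta)$ induces $\bar\alpha\colon \cok\phi\to\cok\phi'$. If $\alpha=\phi's+t\psi$, then $\bar\alpha$ factors through the free $R$-module $F/fF$ via $\bar t$, because $\phi'$ vanishes on $\cok\phi'$. Conversely, any $R$-map $\cok\phi\to\cok\phi'$ lifts to some $\alpha$, and $\beta$ is then forced by injectivity of $\phi'$ (Eisenbud). So $\cok$ is full.

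The main obstacle is faithfulness. Suppose $\bar\alpha$ factors through a free $R$-module. Using that $\cok\phi'$ is MCM and that free $R$-modules are the covers $R^n=\cok(f\cdot 1)$, I would lift the factorization to an equation $\alpha=\phi's+t\psi$ modulo $f$. I would then absorb the error term, using $f=\phi'\psi'$, into a correction of $s$. This is the standard Eisenbud argument and, again, it involves no division.

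For density, every MCM $R$-module has a minimal $S$-resolution of length one, $0\to G\xrightarrow{\phi}F$, with $f$ annihilating the cokernel, so $\phi$ extends to a matrix factorization. Finally, exactness: $\cok(\Sigma X)=\cok(-\psi)\cong\syz^R_1\cok\phi$ by \eqref{eq:reverse}, which is the suspension $\syz^{-1}=\syz$ in $\uMCM(R)$ since $\syz^2\cong\id$. I would also check that the cokernel of a standard triangle is a short exact sequence $0\to\cok\phi'\to\cok(\text{cone})\to\cok(-\psi)\to 0$ of MCM modules, which gives a distinguished triangle in $\uMCM(R)$.
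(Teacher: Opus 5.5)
Your outline is correct. It is essentially the standard argument in the references the paper cites in place of a proof: Eisenbud's correspondence between matrix factorizations and MCM modules, and the Frobenius/Happel or curved-complex construction of the triangulated structure. A few small points need tightening. First, a null-homotopy makes $\bar\alpha$ factor as $\cok\phi\xrightarrow{\bar\psi}G/fG\xrightarrow{\bar t}\cok\phi'$, i.e.\ through $G/fG$ rather than $F/fF$. Second, in the faithfulness step the operative fact is simply that $f$ is a nonzerodivisor: once the factorization is routed through the cover $F'/fF'\onto\cok\phi'$, a map $\gamma\colon F\to F'$ inducing $\cok\phi\to F'/fF'$ has $\gamma\phi=f\delta$, hence $f\gamma=\gamma\phi\psi=f\delta\psi$ and $\gamma=\delta\psi$ on the nose; then $\alpha-\delta\psi$ lands in $\phi'(G')$, and the companion identity $\beta=\psi't+s\phi$ follows from injectivity of $\phi'$. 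Third, for exactness you still have to identify the connecting map of $0\to\cok\phi'\to\cok\bigl(\cone(\alpha,\beta)\bigr)\to\cok(-\psi)\to0$. Comparing with the hull $\bar\psi'\colon\cok\phi'\to G'/fG'$ via $[\,\psi'\ \ \beta\,]$ shows it is $-\bar\beta=\cok\bigl(-\Sigma(\alpha,\beta)\bigr)$, which is precisely what matches the rotated standard triangle.
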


\begin{prop}
  \label{prop:functor-exact}
  For a morphism $(\alpha, \beta)\colon X \to X'$ of matrix factorizations of $f$ over $S$, set
  \[
    (\alpha,\beta)^{\dbl} =
    \left(
      \left[\begin{matrix} \alpha & 0 \\ 0 & \beta \end{matrix}\right]\,,
      \left[\begin{matrix} \beta & 0  \\ 0 & \alpha\end{matrix}\right]
    \right)\,.
  \]
  \begin{enumerate}[\quad(i)]
  \item This makes $(-)^{\dbl}$ into a functor from matrix factorizations of $f$ over $S$ to
    those of $f+uv$ over $B$, and the functor carries null-homotopic morphisms to
    null-homotopic morphisms.
  \item $(-)^{\dbl}$ commutes with suspension on both objects and morphisms: $\left(\Sigma X\right)^{\dbl}
    = \Sigma \left(X^{\dbl}\right)$ and $\left(\Sigma(\alpha,\beta)\right)^{\dbl} = \Sigma\left((\alpha,\beta)^{\dbl}\right)$.
  \item $(-)^{\dbl}$ commutes with cones; specifically, there is an isomorphism $\sigma \colon
    \cone(\alpha,\beta)^{\dbl} \to \cone ((\alpha,\beta)^{\dbl})$ in $\MF_B(f+uv)$, both
    components of which are permutation matrices, such that $\sigma \iota^{\dbl} = \iota$ and
    $\pi \sigma = \pi ^{\dbl}$. 
  \end{enumerate}
  In particular $(-)^{\dbl}$ gives a functor of triangulated categories $\uMF_S(f) \to
  \uMF_B(f+uv)$, equivalently $\uMCM(R) \to \uMCM(A)$. 
\end{prop}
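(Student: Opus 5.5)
Everything here is a direct block-matrix verification, and I would organize it around one convention: every map of free $B$-modules in sight is a $2\times 2$ block matrix whose blocks are either matrices over $S$ or scalar multiples $u\,1$, $v\,1$. Since $u,v$ are central scalars, they commute with every block. Write
\[
  X^{\dbl} = (\Phi,\Psi)\,, \qquad \Phi = \begin{bmatrix} \phi & -v \\ u & \psi \end{bmatrix}, \qquad \Psi = \begin{bmatrix} \psi & v \\ -u & \phi \end{bmatrix}.
\]
Here $\Phi$ maps $G\oplus F \to F\oplus G$ and $\Psi$ maps $F\oplus G\to G\oplus F$; I would fix these orderings once so that $(\alpha,\beta)^{\dbl}$ is typed correctly.

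For (i), I would first check that $(\alpha,\beta)^{\dbl}$ is a morphism:
\[
  \diag(\alpha,\beta)\,\Phi = \Phi'\,\diag(\beta,\alpha)\,.
\]
Entrywise this reduces to $\alpha\phi=\phi'\beta$ on the diagonal, $\beta\psi=\psi'\alpha$ on the diagonal, and $-v\alpha = -v\alpha$, $u\beta=u\beta$ off the diagonal. The companion identity for $\Psi$ is symmetric. Functoriality is clear, since composition and identities are computed blockwise. For null-homotopies, given $s\colon F\to G'$ and $t\colon G\to F'$, I would take as candidate homotopies
\[
  \tilde s=\diag(s,t), \qquad \tilde t=\diag(t,s).
\]
Expanding $\Phi'\tilde s+\tilde t\,\Psi$ gives $\diag(\phi's+t\psi,\ \psi't+s\phi)=\diag(\alpha,\beta)$, because the off-diagonal terms $-vt+tv$ and $us-su$ cancel by centrality of $u,v$. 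The other identity works the same way. Getting the domains and codomains of $\tilde s,\tilde t$ right is the only place where care is needed.

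For (ii), the object identity compares $(\Sigma X)^{\dbl}$, with first matrix $\left[\begin{smallmatrix}-\psi&-v\\u&-\phi\end{smallmatrix}\right]$, against $\Sigma(X^{\dbl})=(-\Psi,-\Phi)$, with first matrix $\left[\begin{smallmatrix}-\psi&-v\\u&-\phi\end{smallmatrix}\right]$. These agree literally, and the second components agree the same way. On morphisms, $(\Sigma(\alpha,\beta))^{\dbl}=(\beta,\alpha)^{\dbl}=(\diag(\beta,\alpha),\diag(\alpha,\beta))$, which is $\Sigma$ applied to $(\alpha,\beta)^{\dbl}$.

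For (iii), which I expect to be the main bookkeeping obstacle, the task is to find the permutation $\sigma$.
\begin{itemize}
  \item $\cone(\alpha,\beta)^{\dbl}$ has first matrix a $2\times2$ block of $2\times2$ blocks, with rows indexed by $(F'\oplus G)\oplus(G'\oplus F)$. Its diagonal blocks are $\left[\begin{smallmatrix}\phi'&\alpha\\0&-\psi\end{smallmatrix}\right]$ and $\left[\begin{smallmatrix}\psi'&\beta\\0&-\phi\end{smallmatrix}\right]$, and its off-diagonal blocks are $\mp v, u$ times identities.
  \item $\cone((\alpha,\beta)^{\dbl})$ has first matrix $\left[\begin{smallmatrix}\Phi'&\diag(\alpha,\beta)\\0&-\Psi\end{smallmatrix}\right]$, with rows indexed by $(F'\oplus G')\oplus(G\oplus F)$.
\end{itemize}
The permutation sending summands in the order $(F',G,G',F)$ to the order $(F',G',G,F)$ swaps the middle two. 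The analogous permutation on the source side gives $\sigma=(\sigma_1,\sigma_2)$. I would then check $\sigma_1\cdot(\text{first matrix})=(\text{first matrix}')\cdot\sigma_2$ entry by entry. The one sign issue is that $-\Psi$ has $-v$ and $+u$ in its off-diagonal entries, and these must match the off-diagonal entries in the $G\oplus F$ part of $\cone(\alpha,\beta)^{\dbl}$. If a sign mismatch appears, I would allow $\sigma$ to be a signed permutation, or reorder within the $G\oplus F$ summand. Finally, $\sigma\iota^{\dbl}=\iota$ and $\pi\sigma=\pi^{\dbl}$ follow because $\iota$ and $\pi$ are inclusions and projections onto the very summands that $\sigma$ tracks.

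The concluding sentence then follows formally. By (i), $(-)^{\dbl}$ descends to the homotopy categories. By (ii) and (iii), it sends standard triangles to triangles isomorphic to standard ones, so it is exact. Lemma~\ref{lem:hmf-umcm} then transports this to $\uMCM(R)\to\uMCM(A)$.
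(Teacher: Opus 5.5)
Your proposal is correct and follows the paper's proof: the same block computations in (i) and (ii), the same homotopies $\diag(s,t)$ and $\diag(t,s)$, and in (iii) the same $\sigma$, which swaps the second and third summands on both source and target. The sign mismatch you anticipate does not occur, so you can drop the signed-permutation fallback. The $(G,G)$ entry $-v$ and the $(F,F)$ entry $u$ of $\cone(\alpha,\beta)^{\dbl}$ are exactly the off-diagonal entries of $-\Psi$, and the second matrix matches $-\Phi$ in the same way, so an honest permutation works. This matters, because a sign on the $G\oplus F$ summand would break $\pi\sigma=\pi^{\dbl}$.
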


The proof of the Proposition is elementary, but we include it to verify that it is valid over
any $S$.

\begin{proof}
  Write $X^{\dbl} = (\Phi \colon G \oplus F \to F \oplus G, \Psi\colon F \oplus G \to G \oplus
  F)$ and ${X'}^{\dbl} = (\Phi' \colon G' \oplus F' \to F' \oplus G', \Psi'\colon F'\oplus G'
  \to G' \oplus F')$ as in Definition~\ref{defn:functor}.  Also write $(A,B) =
  (\alpha,\beta)^{\dbl}$.

  (i) We have $A \Phi = \Phi' B$ and $B \Psi = \Psi' A$ by a direct computation, so $(A,B)$ is a morphism.
  Given a null-homotopy $(s,t)$ for $(\alpha,\beta)$, put $S = \diag(s,t)$ and $T =
  \diag(t,s)$; another direct computation shows that $\Phi' S +T \Psi = A$ and $\Psi' T + S
  \Phi = B$.

  (ii) We have $\Sigma X = (-\psi\colon F \to G, -\phi \colon G \to F)$, so by
  Definition~\ref{defn:functor}
  \[
    (\Sigma X)^{\dbl} =
    \left(
      \left[\begin{matrix} -\psi & -v \\ u & -\phi \end{matrix}\right]\,,
      \left[\begin{matrix} -\phi & v  \\ -u & -\psi\end{matrix}\right]
    \right)
    =
    (-\Psi,-\Phi) = \Sigma(\Phi,\Psi)\,.
  \]
  On morphisms both sides send $(\alpha, \beta)$ to
  $\left(\diag(\beta,\alpha),\diag(\alpha,\beta)\right)$.

  (iii) Set 
  \[
    C = \cone(\alpha,\beta) = \left(
        \left[\begin{matrix} \phi' & \alpha \\ 0 & -\psi\end{matrix}\right] \colon G'\oplus F
        \to F' \oplus G\,,
        \left[\begin{matrix} \psi' & \beta \\ 0 & -\phi\end{matrix}\right] \colon F' \oplus G
        \to G' \oplus F
      \right)\,.
    \]
    Then $C^{\dbl}$ consists of the pair of matrices
    \[
      \left[
        \begin{matrix}
          \phi' & \alpha & -v \\
          0  & -\psi & & -v \\
          u & & \psi' & \beta \\
          & u & 0 & -\phi
        \end{matrix}
      \right]
      \colon
      G' \oplus F \oplus F' \oplus G
      \to
      F' \oplus G \oplus G' \oplus F\,,
    \]
    \[
      \left[
        \begin{matrix}
          \psi' & \beta & v & \\
          0 & -\phi & & v \\
          -u & & \phi' & \alpha \\
          & -u & 0 & -\psi
        \end{matrix}
      \right]
      \colon
      F' \oplus G \oplus G' \oplus F
      \to
      G' \oplus F \oplus F' \oplus G
    \]
    Meanwhile, $(\alpha,\beta)^{\dbl} = (A,B) = \left(\diag(\alpha,\beta)\colon F \oplus G \to F'\oplus G',
      \diag(\beta,\alpha \colon G \oplus F \to G'\oplus F'\right)$ and so
    \begin{multline*}
      \cone\left((\alpha,\beta)^{\dbl}\right) =
      \left(
        \left[
          \begin{matrix} \Phi' & A \\ 0 & -\Psi \end{matrix}
        \right]
        \colon
        G' \oplus F' \oplus F \oplus G
        \to
        F' \oplus G' \oplus G \oplus F\,, \right. \\
          \left.
        \left[
          \begin{matrix} \Psi' & B \\ 0 & -\Phi \end{matrix}
        \right]
        \colon
        F' \oplus G' \oplus G \oplus F
        \to
        G' \oplus F' \oplus F \oplus G
        \right)\,.
      \end{multline*}
      Swapping the second and third rows, and second and third columns, of each matrix in
      $C^{\dbl}$ yields the matrices in $\cone \left(A,B\right)$. The permutation matrix
      $\sigma$ accomplishing these swaps satisfies the listed equations. 

      For the last assertion, we compile the previous parts: By (i) the functor $(-)^{\dbl}$
      induces an additive functor on homotopy categories, which by (ii) commutes with the
      suspension functors. By (iii), cones are carried to cones, whence standard triangles are
      carried to standard triangles. The same is true of distinguished triangles by
      definition. The statement for $\uMCM$ follows from Lemma~\ref{lem:hmf-umcm}.
    \end{proof}

  Solberg proves~\cite[Prop.~3.1]{Solberg} that his version of $(-)^{\dbl}$ is fully
  faithful. To translate this to our context, we must make precise the relationship between the
  two versions.

\begin{lemma}
  \label{lem:solberg}
  Let $(\phi,  \psi)$ be a matrix factorization of $f$ of size $n$, let $M = \cok(\phi, \psi)
  \in \MCM(R)$, and let $F(-)$ be the functor of~\cite[\S 2]{Solberg}, defined on objects by
  \[
    F(\phi, \psi) =
    \left(\left[\begin{matrix} u & \phi \\ \psi & -v\end{matrix}\right], 
      \left[\begin{matrix} v & \phi \\ \psi & -u\end{matrix}\right]\right)\,.
  \]
  Put $\epsilon = \left[\begin{smallmatrix} 0 & 1_n \\ 1_n & 0\end{smallmatrix}\right]$. Then
  \begin{enumerate}[\quad(i)]
  \item\label{item:natural} $(\epsilon, 1)$ is an isomorphism $F(\phi, \psi) \xto{\sim} (\psi, \phi)^{\dbl}$ in
    $\MF_B(f+uv)$, natural in $(\phi,\psi)$; and
  \item\label{item:compare} there is therefore a natural isomorphism of functors
    \[
      \cok \circ F \cong \syz_1^A \circ (-)^{\dbl}  \colon \uMCM(R) \to \uMCM(A)\,,
    \]
    where we define $F$ on stable MCM $R$-modules in the natural way.  In particular
    $(-)^{\dbl}$ is fully faithful if and only if $F$ is.
  \end{enumerate}
\end{lemma}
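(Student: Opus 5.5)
The plan is to verify part~\eqref{item:natural} by a direct block-matrix computation, and then to deduce part~\eqref{item:compare} by applying $\cok$ and combining the result with Lemma~\ref{lem:HF}\eqref{item:Fsyz} and~\eqref{eq:reverse}.

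\textbf{Part~\eqref{item:natural}.} By Definition~\ref{defn:functor} applied to the reversed pair,
\[
  (\psi,\phi)^{\dbl} = \left( \begin{bmatrix} \psi & -v \\ u & \phi\end{bmatrix},
    \begin{bmatrix} \phi & v \\ -u & \psi\end{bmatrix}\right).
\]
By Definition~\ref{defn:hmf}, $(\epsilon,1)$ is a morphism $F(\phi,\psi) \to (\psi,\phi)^{\dbl}$ exactly when two identities hold. The first is
\[
  \epsilon\begin{bmatrix} u & \phi \\ \psi & -v\end{bmatrix} = \begin{bmatrix} \psi & -v \\ u & \phi\end{bmatrix}\cdot 1 .
\]
This is true because left multiplication by $\epsilon$ swaps the two block rows. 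The second is
\[
  1\cdot\begin{bmatrix} v & \phi \\ \psi & -u\end{bmatrix} = \begin{bmatrix} \phi & v \\ -u & \psi\end{bmatrix}\epsilon .
\]
This is true because right multiplication by $\epsilon$ swaps the two block columns. Both $\epsilon$ and $1$ are invertible (indeed $\epsilon^2=1$), so $(\epsilon,1)$ is an isomorphism in $\MF_B(f+uv)$. Only the scalars $0$ and $\pm1$ appear, so the argument is characteristic-free.

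\textbf{Naturality in~\eqref{item:natural}.} Let $(\alpha,\beta)\colon(\phi,\psi)\to(\phi',\psi')$ be a morphism. Viewed as a morphism of the reversed factorizations it is $(\beta,\alpha)\colon(\psi,\phi)\to(\psi',\phi')$. By Proposition~\ref{prop:functor-exact} it is therefore sent to $(\diag(\beta,\alpha),\diag(\alpha,\beta))$. Solberg's $F$ acts on morphisms by block-diagonal matrices with the blocks placed compatibly with the rows and columns of $\left[\begin{smallmatrix} u & \phi\\ \psi & -v\end{smallmatrix}\right]$, namely $F(\alpha,\beta) = (\diag(\alpha,\beta),\diag(\beta,\alpha))$. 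I expect the main (minor) obstacle to be pinning down Solberg's exact convention here and checking that it matches. With this convention, naturality reduces to two identities:
\[
  \epsilon\,\diag(\alpha,\beta) = \diag(\beta,\alpha)\,\epsilon
  \qquad\text{and}\qquad
  1\cdot \diag(\beta,\alpha) = \diag(\beta,\alpha)\cdot 1 .
\]
The first holds because conjugation by $\epsilon$ swaps the diagonal blocks; the second is trivial.

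\textbf{Part~\eqref{item:compare}.} Applying $\cok$ to~\eqref{item:natural} gives a natural isomorphism $\cok F(\phi,\psi) \cong (\cok(\psi,\phi))^{\dbl}$. By~\eqref{eq:reverse} we have $\cok(\psi,\phi)\cong\syz_1^R M$, so this is $(\syz_1^R M)^{\dbl}$. By Lemma~\ref{lem:HF}\eqref{item:Fsyz}, $(\syz_1^R M)^{\dbl} \cong \syz_1^A(M^{\dbl})$. That isomorphism is induced by conjugation by $\diag(1,-1)$, which commutes with the block-diagonal images of morphisms, so it is natural as well. All these isomorphisms are isomorphisms of matrix factorizations, so they survive passage to homotopy classes. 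Lemma~\ref{lem:hmf-umcm} then transports them to $\uMCM$, which yields the natural isomorphism $\cok\circ F \cong \syz_1^A\circ(-)^{\dbl}$.

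\textbf{Full faithfulness.} Over $A$, $\syz_1^A$ is the suspension (up to sign) on $\uMF_B(f+uv)\simeq\uMCM(A)$. It is therefore an autoequivalence, with $\syz_2^A\cong\id$ by~\eqref{eq:reverse} applied over $A$. Composing a functor with an autoequivalence preserves and reflects full faithfulness. Hence $(-)^{\dbl}$ is fully faithful if and only if $\cok\circ F$ is, that is, if and only if $F$ is.
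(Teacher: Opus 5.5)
Your proof follows the same route as the paper: you check the two block identities for $(\epsilon,1)$, apply $\cok$ together with \eqref{eq:reverse} and Lemma~\ref{lem:HF}\eqref{item:Fsyz}, and finish with the formal remark that $\syz_1^A$ is an autoequivalence. The one genuine error is in the naturality paragraph.

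Both matrices $\left[\begin{smallmatrix} u & \phi \\ \psi & -v\end{smallmatrix}\right]$ and $\left[\begin{smallmatrix} v & \phi \\ \psi & -u\end{smallmatrix}\right]$ are maps $F\oplus G\to F\oplus G$, with the summands in the same order. So $F$ must send $(\alpha,\beta)$ to $(\diag(\alpha,\beta),\diag(\alpha,\beta))$. Your proposed $(\diag(\alpha,\beta),\diag(\beta,\alpha))$ is not a morphism of matrix factorizations. The $(1,1)$ blocks of $\diag(\alpha,\beta)\left[\begin{smallmatrix} u & \phi \\ \psi & -v\end{smallmatrix}\right]$ and of $\left[\begin{smallmatrix} u & \phi' \\ \psi' & -v\end{smallmatrix}\right]\diag(\beta,\alpha)$ are $u\alpha$ and $u\beta$, which differ in general.

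Your second naturality identity is also not the one you need. You correctly computed that the second component of $(\beta,\alpha)^{\dbl}$ is $\diag(\alpha,\beta)$. The naturality square on second components therefore reads $1\cdot F(\alpha,\beta)_2=\diag(\alpha,\beta)\cdot 1$. With your convention this would say $\diag(\beta,\alpha)=\diag(\alpha,\beta)$, which is false.

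With the correct convention the two conditions are $\epsilon\,\diag(\alpha,\beta)=\diag(\beta,\alpha)\,\epsilon$ and $\diag(\alpha,\beta)=\diag(\alpha,\beta)$, and both hold. After this fix your argument is complete and agrees with the paper's, which only says that naturality ``is checked by a similar calculation.''
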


\begin{proof}
  It is straightforward to check that $F(\phi,\psi)$ is indeed a matrix factorization of
  $f+uv$. A morphism $(\alpha, \beta) \colon (\Phi_1, \Psi_1) \to (\Phi_2, \Psi_2)$ in
  $\MF_B(f+uv)$ is a pair of matrices with $\alpha \Phi_1 = \Phi_2 \beta$ and
  $\beta \Psi_1 = \Psi_2 \alpha$; taking $(\alpha, \beta)=(\epsilon, 1)$ gives
  \[
    \begin{split}
      \left( \epsilon
        \left[\begin{matrix} u & \phi \\ \psi & -v \end{matrix}\right]\,,
        \left[\begin{matrix} v & \phi \\ \psi & -u \end{matrix}\right]
        \epsilon
      \right)
      &=
      \left(\left[\begin{matrix} \psi & -v \\ u & \phi\end{matrix}\right]\,,
          \left[\begin{matrix}\phi & v \\ -u & \psi\end{matrix}\right]\right) \\ 
        &=
        (\psi,\phi)^{\dbl}\,.
      \end{split}
    \]
    Since $\epsilon^2=1$, the morphism $(\epsilon, 1)$ is its own inverse. Naturality is checked
    by a similar calculation.

    For~\eqref{item:compare}, we have
    \[
      \begin{split}
        \cok F(\phi,\psi)
        &= \cok (\psi,\phi)^{\dbl} \\
        &= \syz_1^R(M)^{\dbl} \\
        &\cong \syz_1^A\left(M^{\dbl}\right)\,,
      \end{split}
    \]
    the isomorphism being Lemma~\ref{lem:HF}\eqref{item:Fsyz}.

    The last sentence is a formal consequence of the fact that $\cok$ is an equivalence and
    $\syz_1^A$ an auto-equivalence.
  \end{proof}

\begin{theorem}
  \label{thm:solberg}
  The functor $(-)^{\dbl} \colon \uMCM(R) \to \uMCM(A)$ is a fully faithful functor of
  triangulated categories.
\end{theorem}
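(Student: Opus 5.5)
The triangulated part is already done: by Proposition~\ref{prop:functor-exact}, $(-)^{\dbl}$ is an exact functor $\uMF_S(f)\to\uMF_B(f+uv)$, and by Lemma~\ref{lem:hmf-umcm} this is the same as an exact functor $\uMCM(R)\to\uMCM(A)$. What remains is full faithfulness. I would prove it directly, as a block-matrix computation in the homotopy category of matrix factorizations, following the shape of Solberg's argument. I would also check that the argument uses only the scalars $\pm1$ and no hypothesis on $S$. By Lemma~\ref{lem:solberg}\eqref{item:compare}, this is equivalent to full faithfulness of Solberg's $F$.

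Fix reduced matrix factorizations $X=(\phi,\psi)$ and $X'=(\phi',\psi')$ of $f$ over $S$. Write $X^{\dbl}=(\Phi,\Psi)$ and ${X'}^{\dbl}=(\Phi',\Psi')$ as in Definition~\ref{defn:functor}. A morphism $X^{\dbl}\to{X'}^{\dbl}$ is a pair of $2\times2$ block matrices $(P,Q)$ over $B$ with $P\Phi=\Phi'Q$ and $Q\Psi=\Psi'P$.

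For \textbf{fullness}, I would first reduce modulo homotopy so that the entries of $P$ and $Q$ do not involve $u$ and $v$. Expand each entry as a power series in $u,v$ with coefficients over $S$, using completeness of $B=S[\![u,v]\!]$. Compare coefficients of $u^iv^j$ in the two commutation relations. The diagonal parts of the constant terms satisfy $\alpha\phi=\phi'\beta$ and $\beta\psi=\psi'\alpha$, so they give a morphism $(\alpha,\beta)\colon X\to X'$. The claim is that $(P,Q)-(\alpha,\beta)^{\dbl}$ is null-homotopic. To prove it, I would build the homotopy degree by degree in $(u,v)$. The key point is that the off-diagonal $u$ and $v$ entries of $\Phi,\Psi$ let one absorb any term divisible by $u$ or $v$: a term $u\gamma$ in $P$ is cancelled by a homotopy whose component is $\gamma$ placed in the block where $\Psi$ or $\Phi'$ carries $u$. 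The resulting correction terms have strictly higher $(u,v)$-degree, and the corrections converge $(u,v)$-adically because $B$ is complete.

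Finally one has to handle the off-diagonal constant blocks. Reducing the relations modulo $(u,v)$ shows that each such block is a morphism $X\to\Sigma X'$ or $\Sigma X\to X'$ of matrix factorizations over $S$. Looking at the coefficients of $u$ and $v$ in the relations, these blocks are homotopies against the identity, so they are themselves null-homotopic and can be removed as well.

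For \textbf{faithfulness}, suppose $(\alpha,\beta)^{\dbl}$ is null-homotopic via $(\mathcal S,\mathcal T)$. Reduce the homotopy equations modulo $(u,v)$; the $u,v$ entries of $\Phi'$ and $\Psi$ then vanish. Reading off the $(1,1)$ blocks gives $\alpha=\phi's_{11}+t_{11}\psi$ and $\beta=\psi't_{11}+s_{11}\phi$ over $S$ (with the appropriate block indices), which is a null-homotopy of $(\alpha,\beta)$. Equivalently, one can use Lemma~\ref{lem:HF}\eqref{item:HF}: $(-)^{\flat\flat}\circ(-)^{\dbl}\cong\id\oplus\syz_1^R$, and the first component gives a left inverse on Hom groups.

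The main obstacle is the fullness step: organizing the degree-by-degree homotopy so that the corrections really converge and the off-diagonal blocks are killed. This is the only place where Solberg's computation needs to be audited, to confirm that it uses only signs $\pm1$ and no field or isolated-singularity hypothesis.
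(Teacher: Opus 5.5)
\textbf{There is a gap in the fullness step.} Your exactness step is fine, and so is your faithfulness argument: reducing a null-homotopy $(\mathcal S,\mathcal T)$ of $(\alpha,\beta)^{\dbl}$ modulo $(u,v)$ and reading off the $(1,1)$ blocks does give a null-homotopy of $(\alpha,\beta)$ over $S$. The paper does not reprove anything here. It cites Solberg's Prop.~3.1 for $F$ and transfers it via Lemma~\ref{lem:solberg}, and Remark~\ref{rem:solberg-audit} checks that Solberg's computation is characteristic-free.

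The problem is what you call the key point. A term $u\gamma$ cannot be absorbed at the cost of corrections of strictly higher $(u,v)$-degree. A homotopy contributes $(\Phi'\mathcal S+\mathcal T\Psi,\ \Psi'\mathcal T+\mathcal S\Phi)$. Suppose you put $\gamma$ in the $(1,1)$ block of $\mathcal S$ to produce $u\gamma$ in the $(2,1)$ block of $P$. You then also produce $\phi'\gamma$ in the $(1,1)$ block of $P$ and $\gamma\phi$ in the $(1,1)$ block of $Q$, and these have degree one \emph{lower} than $u\gamma$. Every other placement behaves the same way, since every block of $\mathcal S,\mathcal T$ is multiplied by some $\phi',\psi'$ and some $\phi,\psi$. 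For example, $\gamma$ in the $(1,2)$ block of $\mathcal T$ creates $\gamma\phi$ and $\psi'\gamma$. So the induction you describe runs backwards. A degree-$(m-1)$ homotopy component is harmless only if it is a cycle for the differential $d_0$ built from $\phi,\psi,\phi',\psi'$ alone, and such cycles can only produce leading terms of a special form.

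\textbf{How the argument has to go.} It must be organized upward from degree $0$, and it needs an ingredient you have not supplied. Let $(P,Q)$ be a morphism with entries in $(u,v)^m$, $m\geq 1$. Its leading form $(P_m,Q_m)$ is a $d_0$-cycle. The degree-$(m+1)$ part of the morphism equations gives $d_1(P_m,Q_m)\in\im d_0$, where $d_1$ is the part of the differential coming from the entries $\pm u,\pm v$. You must show that $(P_m,Q_m)=d_1\tau+d_0\sigma$ with $\tau$ a $d_0$-cycle of degree $m-1$. Subtracting $d(\tau+\sigma)$ then moves the morphism into $(u,v)^{m+1}$, and completeness gives convergence.

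This vanishing is the real content. It comes from $\Hom_B(X^{\dbl},X'^{\dbl})\cong\Hom_S(X,X')\otimes_S\End_B(K)$ with $K=(u,v)$, up to harmless signs. Here $\End_B(K)$ is the $\ZZ/2$-folding of the Koszul complex of $B$ on $u,v$, which is $S$-linearly homotopy equivalent to $S\cdot 1_K$. Tensoring that homotopy equivalence with $\Hom_S(X,X')$ in fact proves fullness in one step, since $h\otimes 1_K$ is $h^{\dbl}$.

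Your degree-$0$ discussion is a special case of the same computation. The $u$- and $v$-coefficients of the relations show that the off-diagonal constant blocks are null-homotopic over $S$. They also show this for the difference between the two diagonal constant blocks, a point you did not address.

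Finally, your order of steps is inverted. Once $P$ and $Q$ have constant entries, the $u$- and $v$-coefficients of $P\Phi=\Phi'Q$ already force $P=\diag(\alpha,\beta)$ and $Q=\diag(\beta,\alpha)$ exactly. So all of the work lies in the reduction step, which the proposal leaves unproved.
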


\begin{proof}
  Exactness is Proposition~\ref{prop:functor-exact}.  Proposition~3.1 of \cite{Solberg} proves
  full faithfulness for $F$, and Lemma~\ref{lem:solberg} transfers this property to
  $(-)^{\dbl}$. 
\end{proof}

\begin{remark}
  \label{rem:solberg-audit}
  The main results of~\cite{Solberg} are often cited as applying to equicharacteristic
  hypersurface rings of finite Cohen-Macaulay type. We describe his proof of full faithfulness briefly, to confirm
  that it does not depend on the characteristic nor even on the ring containing a field.

  To show
  \[
    \uHom_{\MF_S(f)} \left((\phi,\psi), (\phi', \psi')\right) \cong \uHom_{\MF_B(f+uv)}\left(F(\phi,\psi),
    F(\phi',\psi')\right)\,,
  \]
  injectivity is immediate from Lemma~\ref{lem:solberg}\eqref{item:natural}. For surjectivity,
  Solberg first identifies two families of morphisms between $F(\phi,\psi)$ and
  $F(\phi',\psi')$ that factor through the projective objects $(f+uv,1)$ and $(1,f+uv)$. He
  then writes out the eight matrix equations that the blocks of an arbitrary morphism must
  satisfy, writes each block as a power series in $u$ over $S[\![v]\!]$, and subtracts
  morphisms from the two families successively. The result turns out to be of the form
  \[
    \left(\left[\begin{matrix} 0 & \alpha \\ \beta & 0\end{matrix}\right]\,,
      \left[\begin{matrix} 0 & \delta \\ \gamma & 0\end{matrix}\right]\right)
  \]
  and satisfies $u\beta = -v\gamma$ and $-v\alpha = u\delta$. It follows that $\alpha = uP$ is
  a multiple of $u$ and $\beta = vQ$ is a multiple of $v$. Since the resulting expressions for
  $uP$ and $-uQ$ are then shown not to depend on $u$, he concludes $P=Q=0$.

  The operations in this argument are block-multiplication and writing block matrices over
  $S[\![v]\!]$, and the conclusion follows as $u,v$ is a regular sequence. No coefficients
  other than $\pm 1$ arise. 
\end{remark}

\section{The Jacobian homotopies}\label{sect:jacobian}

This section contains the elementary observation on derivations that we will use to establish
density of the Kn\"orrer functor $(-)^{\dbl}$. The statement is not new; see
Remark~\ref{rmk:jacobian-history} for some of the history.

\begin{lemma}
  \label{lem:jacobian}
  Let $T$ be a commutative ring, $h \in T$, and let $(\Phi \colon \calg \to \calf, 
  \Psi \colon \calf \to \calg)$ be a matrix factorization of $h$ over $T$. Let $\del$ be
  any derivation of $T$, extended entrywise to matrices. Then
  \begin{equation}
    \label{eq:jacobian}
    (\del\Phi)\Psi + \Phi(\del\Psi) = (\del h)\cdot 1_{\calf}
    \qquad\text{and}\qquad
    (\del\Psi)\Phi + \Psi(\del\Phi) = (\del h)\cdot 1_{\calg}\,.
  \end{equation}
  In particular the pair $(\del\Psi, \del\Phi)$ is a null-homotopy for multiplication by
  $\del h$.  It follows that $\del h$ annihilates the stable endomorphism ring of $(\Phi,\Psi)$, and hence
  that $\del h$ annihilates $\uHom\left((\Phi,\Psi), (\Phi',\Psi')\right)$ for every matrix
  factorization $(\Phi',\Psi')$ of $h$. In particular the Jacobian ideal $\jac(h)$
  annihilates $\uHom_T$.
\end{lemma}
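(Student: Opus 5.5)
The plan is to apply the derivation $\del$ entrywise to the defining identities $\Phi\Psi = h\cdot 1_{\calf}$ and $\Psi\Phi = h\cdot 1_{\calg}$. The Leibniz rule holds entrywise for matrix products: the $(i,j)$ entry of $\Phi\Psi$ is $\sum_k \Phi_{ik}\Psi_{kj}$. Hence $\del(\Phi\Psi) = (\del\Phi)\Psi + \Phi(\del\Psi)$, and likewise for $\Psi\Phi$. Since $\del$ of the scalar matrix $h\cdot 1$ is $(\del h)\cdot 1$, this gives \eqref{eq:jacobian} immediately, with no use of the characteristic or of any coefficient field.

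Next I would compare \eqref{eq:jacobian} with Definition~\ref{defn:hmf}(2). The identity endomorphism of $X=(\Phi,\Psi)$ is $(1_{\calf},1_{\calg})$, and multiplication by $\del h$ is the morphism $((\del h)1_{\calf},(\del h)1_{\calg})$. A null-homotopy consists of $s\colon \calf\to\calg$ and $t\colon\calg\to\calf$ with
\[
(\del h)1_{\calf} = \Phi s + t\Psi \qquad\text{and}\qquad (\del h)1_{\calg} = \Psi t + s\Phi .
\]
Taking $s=\del\Psi$ and $t=\del\Phi$, these are exactly the two equations of \eqref{eq:jacobian}. So $(\del h)\cdot \id_X = 0$ in $\uMF_T(h)$.

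For the annihilation statements, I would use that the homotopy category is $T$-linear: null-homotopic morphisms form an ideal closed under composition with arbitrary morphisms and under multiplication by scalars. Given any morphism $\gamma\colon X\to X'$, we have $(\del h)\gamma = \gamma\circ((\del h)\id_X)$. Explicitly, if $(s,t)$ is a homotopy for $(\del h)\id_X$, then $(\gamma_2 s,\gamma_1 t)$ is a homotopy for $(\del h)\gamma$, where $\gamma=(\gamma_1,\gamma_2)$. This uses $\gamma_1\Phi=\Phi'\gamma_2$ and $\gamma_2\Psi=\Psi'\gamma_1$, and it is a one-line check. Hence $\del h$ annihilates $\uEnd(X)$ and every $\uHom(X,X')$.

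Finally, $\jac(h)$ is generated by the elements $\del h$ as $\del$ ranges over derivations (for instance partial derivatives in a power series ring), and annihilators are ideals, so $\jac(h)$ annihilates $\uHom_T$. There is no real obstacle here. The only point needing care is the bookkeeping that the homotopy pair comes in the order $(\del\Psi,\del\Phi)$ matching the convention $(s,t)$ of Definition~\ref{defn:hmf}.
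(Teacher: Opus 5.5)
Your proposal is correct and follows the paper's own route. You differentiate $\Phi\Psi = h\cdot 1$ and $\Psi\Phi = h\cdot 1$ with the Leibniz rule, read off $(\del\Psi,\del\Phi)$ as a null-homotopy, and compose to get the annihilation statements; your explicit homotopy $(\gamma_2 s,\gamma_1 t)$ for $(\del h)\gamma$ simply spells out what the paper compresses into ``the last two assertions follow by composing.''
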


\begin{proof}
  The equations \eqref{eq:jacobian} follow from differentiating $\Phi\Psi = h \cdot 1_\calf$
  and $\Psi\Phi = h \cdot 1_\calg$ and obeying the Leibniz rule. Either equation implies that
  multiplication by $\del h$ factors through a free module. The last two assertions follow by
  composing. 
\end{proof}

\begin{corollary}
  \label{cor:uv-kill}
  Let $N$ and $N'$ be MCM $A$-modules. Then $(u,v)$ annihilates $\uHom_A(N,N')$.
  Moreover $(u,v)$ annihilates $\Ext_A^1(N,Y)$ for every $A$-module $Y$.
\end{corollary}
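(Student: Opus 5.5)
We apply Lemma~\ref{lem:jacobian} with $T = B = S[\![u,v]\!]$ and $h = g = f+uv$. The derivations are $\dd{u}$ and $\dd{v}$, which are $S$-linear derivations of $B$; they exist without any hypothesis on $k$ or on $S$ containing a field, since $B$ is a power series ring over $S$. Since $f \in S$, we have $\dd{u}(g) = v$ and $\dd{v}(g) = u$. So both $u$ and $v$ lie in the ideal generated by derivatives of $g$.

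For the first statement, let $N, N'$ be MCM $A$-modules. If $N$ is free, then $\uHom_A(N,N') = 0$ and there is nothing to prove. Otherwise, strip off free summands, which does not change stable $\Hom$. Write $N = \cok(\Phi,\Psi)$ and $N' = \cok(\Phi',\Psi')$ for reduced matrix factorizations of $g$ over $B$, which we may do by Eisenbud's theorem. By Lemma~\ref{lem:hmf-umcm}, $\uHom_A(N,N') \cong \uHom_{\uMF_B(g)}\left((\Phi,\Psi),(\Phi',\Psi')\right)$, and this isomorphism is $B$-linear. Lemma~\ref{lem:jacobian} then shows that $v = \dd{u}g$ and $u = \dd{v}g$ annihilate the right side. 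Hence the ideal $(u,v)$ annihilates $\uHom_A(N,N')$.

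For the $\Ext$ statement, the plan is to use that multiplication by $x \in \{u,v\}$ on $N$ factors through a free $A$-module. Indeed, the null-homotopy of Lemma~\ref{lem:jacobian} for the identity matrix factorization shows that $x\cdot 1_N$ is zero in $\uEnd_A(N)$, by the first part. So $x\cdot 1_N = \beta\alpha$ with $\alpha\colon N \to P$, $\beta\colon P \to N$ and $P$ free. The map $x \colon \Ext^1_A(N,Y) \to \Ext^1_A(N,Y)$ is induced by $x\cdot 1_N$ in the first variable, since $\Ext$ is $A$-bilinear. It therefore factors through $\Ext^1_A(P,Y) = 0$, and so $x$ annihilates $\Ext^1_A(N,Y)$.

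The only point needing care, and the main obstacle, is the compatibility step: that the module structure on $\uHom$ induced by multiplication by $x$ matches the action of $x$ on matrix factorizations, and that a null-homotopic endomorphism of matrix factorizations gives a map of cokernels factoring through a free module. The second is the standard direction of Lemma~\ref{lem:hmf-umcm}. Concretely, if $x\cdot 1 = \Phi s + t\Psi$, then on $\cok\Phi$ the map $x$ equals the map induced by $t\Psi$. This factors through $\cok(\Phi\colon\ldots)\to G/gG$, which is a free $A$-module, via $\Psi$ and then $t$. This argument is characteristic-free.
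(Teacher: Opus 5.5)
Your proof is correct and is essentially the paper's argument. You apply Lemma~\ref{lem:jacobian} over $B$ with the derivations $\dd{u}$ and $\dd{v}$, using $\dd{u}(f+uv)=v$ and $\dd{v}(f+uv)=u$, and then get the $\Ext$ statement because multiplication by $u$ or $v$ on $N$ factors through a free $A$-module, together with functoriality of $\Ext^1_A(-,Y)$. Your extra details (the $B$-linearity of the equivalence in Lemma~\ref{lem:hmf-umcm} and the explicit factorization through $G/gG$) just unpack what the paper leaves implicit. The preliminary reduction to reduced factorizations is harmless but unnecessary, since Lemma~\ref{lem:jacobian} applies to any matrix factorization.
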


\begin{proof}
  Apply Lemma~\ref{lem:jacobian} with $\del = \dd{v}$ and $\del = \dd{u}$, and observe
  that
  \[
    \dd{u}(f+uv) = v \qquad\text{and}\qquad \dd{v}(f+uv) = u\,,
  \]
  since $f$ involves neither $u$ nor $v$.

  The last statement follows from the fact that 
  multiplication by $u$, resp.\ $v$, factors through a free $A$-module, together with the
  functoriality of $\Ext$.
\end{proof}

\begin{remark}
  \label{rmk:jacobian-history}
  Lemma~\ref{lem:jacobian} is not new, and indeed has appeared multiple times in the literature
  in a few different costumes. The earliest avatar I can find
  is~\cite[Corollary 7.8.7]{Buchweitz:2021}, which states that if $f = (f_1, \dots, f_c)$ is a
  regular sequence in the maximal ideal of a regular local ring $P$, then the Jacobian ideal
  $\jac(f)$ annihilates all Tate cohomology groups over the complete intersection ring
  $P/(f)$. This is more general than Lemma~\ref{lem:jacobian}, both in allowing complete
  intersections of codimension greater than one and in covering all of Tate cohomology. The
  proof does not use derivations directly, instead using the K\"ahler (or Dedekind) different,
  which in this setting agrees with the Noether different, and citing~\cite[S\"atze 1 \&
  2]{Kunz:1986}.

  Buchweitz's result, in the form ``$\jac(T)$ annihilates $\Ext_T^1(N,Y)$ when $N$ is maximal
  Cohen-Macaulay'' was extended beyond complete intersections by H.-J. Wang~\cite[Theorem
  5.4]{Wang:1994} and Iyengar-Takahashi~\cite[Theorem
  4.2]{Iyengar-Takahashi:2021}, again using (generalizations of) the Noether different.

  The proof above, differentiating $\Phi\Psi = h\cdot 1$, has apparently been rediscovered
  several times. It appears in~\cite[Lemma 2.4]{Kajiura-Saito-Takahashi:2007}, showing that
  $\uHom$ is a finitely-generated module over the Jacobian ring, hence is a finite-dimensional
  vector space when $f$ defines an isolated singularity. It reappears in~\cite[\S 4.4 and Lemma
  4.5]{Dyckerhoff:2011} in the context of annihilating $\Tor$, and in~\cite[Lemma
  6.3]{Murfet:2013} in writing down a residue formula for a nondegenerate pairing on homotopy
  classes of matrix factorizations. I first learned of this proof from \"Ozg\"ur Esentepe, who
  used it to conclude~\cite[Remark 3.1]{Esentepe:2020} that the Jacobian ideal is contained in the
  \emph{cohomological annihilator}
  \[
    \operatorname{ca}(T) = \bigcup_n \bigcap_{N, N' \text{ MCM}} \Ann_T \Ext_T^n(N,N')\,.
  \]
  The significance of Lemma~\ref{lem:jacobian} for our purposes is twofold. Unlike some other
  similar statements, which deliver a \emph{power} of an element that annihilates $\uHom$ or
  $\Ext^1$, Lemma~\ref{lem:jacobian} gives exponent $1$. Secondly, for the hyperbolic extension
  $R^{\dbl}$, the two adjoined variables are themselves partial derivatives of $f+uv$ on the
  nose, whereas for the double branched cover the corresponding computation is $\dd{z}(f+z^2) =
  2z$, which is a significant difference if $2$ is not a unit in $R$ or is even equal to
  $0$. This asymmetry is what permits Lemma~\ref{lem:singular-locus} below and
  makes~\cite[Prop.~8.18]{Leuschke-Wiegand:BOOK} fail in characteristic~$2$, as we show in
  Section~\ref{sect:char2}.
\end{remark}

We can now record the one-variable statement promised in the introduction. It is the
analogue for the hyperbolic extension of Kn\"orrer's
Theorem~\ref{thm:Knorrer} in the form given in \cite{Dugas-Leuschke:2017}, and unlike
that theorem it needs no hypothesis on the characteristic.

\begin{lemma}[{\cite[Lemma 2.2]{Dugas-Leuschke:2017}}]
  \label{lem:speed-kills}
  Let $Q$ be a commutative Noetherian ring, $X$ a finitely generated $Q$-module, and $z$
  a non-zerodivisor on $X$. The following are equivalent.
  \begin{enumerate}[\quad(i)]
  \item \label{item:DL}$\syz_1^Q(X/zX) \cong X \oplus \syz_1^Q(X)$;
  \item $z$ annihilates $\Ext^1_Q\left(X, \syz_1^Q(X)\right)$;
  \item $z$ annihilates $\Ext^1_Q(X,Y)$ for every finitely generated $Q$-module $Y$.
  \end{enumerate}
\end{lemma}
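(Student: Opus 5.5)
I would prove the cycle of implications (iii)$\Rightarrow$(ii)$\Rightarrow$(i)$\Rightarrow$(iii). The first is trivial: take $Y = \syz_1^Q(X)$. All the work sits in (ii)$\Leftrightarrow$(i), and the tool is the short exact sequence
\[
  0 \lto X \xrightarrow{\ z\ } X \lto X/zX \lto 0,
\]
which is exact because $z$ is a non-zerodivisor on $X$. I would fix a presentation $0 \to \syz_1^Q(X) \to P \to X \to 0$ with $P$ projective. Since the surjection $X \to X/zX$ composed with $P \to X$ is surjective, the kernel $K$ of $P \to X/zX$ is a first syzygy of $X/zX$.

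This $K$ sits in a short exact sequence
\[
  0 \lto \syz_1^Q(X) \lto K \lto X \lto 0,
\]
where the map $K \to X$ is the restriction of $P \to X$, whose image is $zX \cong X$. Equivalently, $K$ is the pullback of $P \to X \leftarrow X$, the right-hand map being multiplication by $z$. So the extension class of $K$ in $\Ext^1_Q(X, \syz_1^Q X)$ is $z \cdot \eta$, where $\eta$ is the class of the syzygy sequence. Pullback along $z$ acts on $\Ext^1$ as multiplication by $z$.

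For (ii)$\Rightarrow$(i): if $z$ kills $\Ext^1_Q(X,\syz_1^Q X)$, then $z\eta = 0$. The sequence for $K$ therefore splits, giving $K \cong X \oplus \syz_1^Q X$, which is (i).

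For (i)$\Rightarrow$(iii): I first claim that $\eta$ generates $\Ext^1_Q(X,\syz_1^QX)$ as a module over $\End(X)$ acting on the left, i.e.\ via pullback. Indeed, any class is the pushout of $\eta$ along some map $\syz_1^Q X \to Y$; so more precisely every element of $\Ext^1(X,Y)$ is $g_*\eta$ for some $g$. Hence it suffices to show $z\eta = 0$, since then $z\, g_*\eta = g_*(z\eta) = 0$ for all $Y$.

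So the crux is deducing $z\eta=0$ from an abstract isomorphism $K \cong X \oplus \syz_1^Q X$. This is the main obstacle, because an abstract isomorphism does not obviously split the specific sequence. Over a local ring, or in a setting with Krull–Schmidt, I would use the standard fact (Miyata's theorem) that a short exact sequence $0\to A\to B\to C\to 0$ with $B\cong A\oplus C$ splits, for finitely generated modules over a Noetherian ring. Miyata's theorem holds in exactly this generality, so I would cite it. Then $z\eta=0$ follows and the cycle closes. If citing Miyata is undesirable, the fallback is the original argument of \cite{Dugas-Leuschke:2017}, which the lemma is quoted from.
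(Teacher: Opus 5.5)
Your proposal is correct and follows the paper's argument. The pullback of the syzygy sequence along multiplication by $z$ gives $0 \to \syz_1^Q X \to \syz_1^Q(X/zX) \to X \to 0$ with class $z\eta$, and Miyata's theorem turns the abstract isomorphism in (i) into a splitting. Your pushout argument for (ii)$\Leftrightarrow$(iii) makes explicit a step the paper leaves implicit, since every class in $\Ext^1_Q(X,Y)$ has the form $g_*\eta$.

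One small slip: $\eta$ generates $\Ext^1_Q(X,\syz_1^Q X)$ under pushout along $\End_Q(\syz_1^Q X)$, not in general under pullback by $\End_Q(X)$ as you first state. The pushout version is the one your argument actually uses, so the proof is unaffected.
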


For the reader's convenience we recall the argument. There is a short exact sequence
$0 \to \syz_1^Q(X) \to \syz_1^Q(X/zX) \to X \to 0$, obtained by pulling back a projective
presentation of $X$ along multiplication by $z$, and this sequence is the image of the element
$z\cdot \id_X$ under $\End_Q(X) \to \Ext^1_Q\left(X,\syz_1^Q(X)\right)$. So $z$ kills that
$\Ext$ group if and only if the sequence splits, which by Miyata's theorem~\cite{Miyata}
happens if and only if the isomorphism in \eqref{item:DL} holds.

\begin{corollary}
  \label{cor:one-variable}
  Let $N$ be a MCM $A$-module and let $z$ be either of $u$ or $v$. Then
  \[
    \syz_1^A(N/zN) \cong N \oplus \syz_1^A(N)\,.
  \]
\end{corollary}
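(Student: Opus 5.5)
The plan is to apply Lemma~\ref{lem:speed-kills} with $Q = A$, $X = N$, and $z \in \{u, v\}$, verifying condition (iii). Two hypotheses have to be checked.

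The first is that $z$ is a non-zerodivisor on $N$. As remarked after Definition~\ref{defn:functor}, $u, v$ is a regular sequence on $A$. Indeed, $A/(u) \cong S[\![v]\!]/(f)$, on which $v$ is regular, and $u$ is regular on $A$ because $A$ is a domain modulo nothing subtle: $f + uv$ is a nonzerodivisor in $B$, and $(f+uv, u) = (f, u)$ is a regular sequence in $B$. Hence $u$ and $v$ each form part of a system of parameters of the Cohen--Macaulay local ring $A$. The same holds by symmetry with the roles of $u$ and $v$ exchanged. Since $N$ is MCM, every element that is part of a system of parameters of $A$ is $N$-regular. In particular each of $u$ and $v$ separately is a non-zerodivisor on $N$; for $N = 0$ the statement is trivial.

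The second is condition (iii): $z$ annihilates $\Ext^1_A(N, Y)$ for every finitely generated $A$-module $Y$. This is exactly the last assertion of Corollary~\ref{cor:uv-kill}. To recall the mechanism, apply Lemma~\ref{lem:jacobian} with $\del = \dd{v}$ (respectively $\dd{u}$) to a matrix factorization of $g = f+uv$ presenting $N$. This shows that multiplication by $u$ (respectively $v$) on $N$ factors through a free $A$-module. Since $\Ext^1_A(-,Y)$ vanishes on free modules, functoriality gives $z \cdot \Ext^1_A(N,Y) = 0$.

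Lemma~\ref{lem:speed-kills} then yields $\syz_1^A(N/zN) \cong N \oplus \syz_1^A(N)$. I do not expect a real obstacle. The only points needing care are confirming that $z$ is $N$-regular without assuming anything about $k$, which rests only on the regular-sequence fact above, and checking that Corollary~\ref{cor:uv-kill} applies to arbitrary MCM $N$, possibly with free summands. The latter is fine because a free summand contributes nothing to $\Ext^1$.
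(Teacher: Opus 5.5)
Your proposal is correct and matches the paper's proof: the paper applies Lemma~\ref{lem:speed-kills} to $N$, using that $z$ is a non-zerodivisor on $N$ (because $u,v$ is a regular sequence on $A$ and $N$ is MCM) and that $z$ annihilates $\Ext^1_A(N,Y)$ for all $Y$ by Corollary~\ref{cor:uv-kill}. Your write-up gives more detail on the regularity of $u$ and $v$ than the paper, which simply cites the regular-sequence fact. That extra detail is sound, apart from the garbled phrase ``a domain modulo nothing subtle.''
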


\begin{proof}
  Apply Lemma~\ref{lem:speed-kills}, noting that $z$ is a non-zerodivisor on $N$ and
  annihilates $\Ext^1_A(N,Y)$ for all $Y$ by Corollary~\ref{cor:uv-kill}.
\end{proof}

\section{Density}\label{sect:density}

In this section we prove density of the Kn\"orrer functor $(-)^{\dbl}$. Keep the notation
established thus far.

At this point it becomes convenient to pass from the stable module category $\uMCM$ to the
singularity category $\Dsg$. Recall that this is the Verdier quotient
$\Dsg(\Gamma) = {\mathrm D}^{\mathrm b}(\mod \Gamma)/\operatorname{perf}(\Gamma)$, where
$\operatorname{perf}(\Gamma)$ consists of the perfect complexes over $\Gamma$, i.e.\ those
which are isomorphic in $\mathrm{D}(\mod \Gamma)$ to a bounded complex of finitely generated
projective modules.  Buchweitz proves~\cite[Thm~4.4.1]{Buchweitz:2021}
\[
  \uMCM(\Gamma) \simeq \Dsg(\Gamma)
\]
for $\Gamma$ Gorenstein; in particular, this equivalence holds for both $R$ and $A$. We denote
by $\Sigma$ the shift on $\Dsg$, which is $\syz_1^{-1}$ on MCM modules. 

The advantage of working in the singularity category is that the surjection $A \onto R =
A/(u,v)$ allows us to consider (MCM) $R$-modules as objects in $\Dsg(A)$ directly.

We collect a few lemmas.

\begin{lemma}
  \label{lem:Rperfect}
  Viewed as a complex over $A$ via $A \onto R$, the ring $R$ is a perfect complex, hence
  $R \cong 0$ in $\Dsg(A)$. Furthermore, for every finitely generated $R$-module $X$, there is
  an isomorphism $X \cong \Sigma \left(\syz_1^R X\right)$ in $\Dsg(A)$.  In particular if $X$
  is a stable MCM $R$-module then $\syz_2^A X \cong X$ in $\Dsg(A)$.
\end{lemma}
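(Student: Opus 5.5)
The plan is to argue in three steps: first $R$ is perfect over $A$, then a short exact sequence of $R$-modules gives a triangle in $\Dsg(A)$, and finally the syzygy statements combine.

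First, $R = A/(u,v)$, and $u,v$ is a regular sequence on $A$. To see this, note that $A/(u) = S[\![v]\!]/(f)$, on which $v$ is a non-zerodivisor. Also $u$ is a non-zerodivisor on $A = B/(f+uv)$: the ring $B$ is a regular local domain, $f+uv \neq 0$, and $B/(f+uv,u) \cong S[\![v]\!]/(f)$ has dimension one less than $A$, so $u$ is a parameter on the Cohen--Macaulay ring $A$. Hence the Koszul complex $K(u,v;A)$, namely
\[
0 \to A \to A^2 \to A \to 0,
\]
is a finite free resolution of $R$ over $A$. So $R$ is perfect, and $R \cong 0$ in $\Dsg(A)$. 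The same holds for every finitely generated free $R$-module $R^m$.

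Second, let $X$ be a finitely generated $R$-module. Take a short exact sequence of $R$-modules
\[
0 \to \syz_1^R X \to R^m \to X \to 0 .
\]
Viewed over $A$, it gives a distinguished triangle
\[
\syz_1^R X \to R^m \to X \to \Sigma\, \syz_1^R X
\]
in $\mathrm{D}^{\mathrm b}(\mod A)$, and hence in $\Dsg(A)$. The middle term is zero in $\Dsg(A)$ by the first step, so the connecting morphism $X \to \Sigma\,\syz_1^R X$ is an isomorphism in $\Dsg(A)$.

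Third, for the last assertion, apply the previous step twice:
\[
X \cong \Sigma\,\syz_1^R X \cong \Sigma^2 \syz_2^R X .
\]
When $X$ is a stable MCM $R$-module, \eqref{eq:reverse} gives $\syz_2^R X \cong X$ as $R$-modules, so $X \cong \Sigma^2 X$ in $\Dsg(A)$. On the other hand, the syzygy sequences $0 \to \syz_{i+1}^A X \to A^{n_i} \to \syz_i^A X \to 0$ give triangles in $\Dsg(A)$ with free middle term. So $\syz_2^A X \cong \Sigma^{-2} X$ in $\Dsg(A)$, with $\Sigma^{-1}$ realized by $\syz_1^A$ as recalled above. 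Combining the two gives $\syz_2^A X \cong \Sigma^{-2}X \cong X$, which is the claim.

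There is no serious obstacle here. The only points that need care are the regularity of $u,v$ on $A$, which holds with no hypothesis on the characteristic, and keeping the direction of the shift consistent. Note that $\syz_2^A X$ is an honest MCM $A$-module, because $A$ has depth $d+2$ while $X$ has depth $d$, so the statement is meaningful under Buchweitz's equivalence.
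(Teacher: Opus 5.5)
Your proof is correct and follows essentially the same route as the paper: the Koszul complex on $u,v$ shows $R$ is perfect, the triangle from $0\to\syz_1^R X\to R^m\to X\to 0$ gives $X\cong\Sigma\,\syz_1^R X$, and iterating with $\syz_2^A=\Sigma^{-2}$ and $\syz_2^R X\cong X$ finishes. Your only addition is an explicit check that $u,v$ is $A$-regular: $u$ is a parameter on the Cohen--Macaulay ring $A$, and $v$ is a non-zerodivisor on $A/uA\cong R[\![v]\!]$. The paper takes that fact for granted.
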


\begin{proof}
  Since $u, v$ is a regular sequence in $A$ and $A/(u,v) = R$, $R$ is quasi-isomorphic to the
  Koszul complex
  \(
  0 \lto A \xrightarrow{\left[\begin{smallmatrix} -v \\ u\end{smallmatrix}\right]}
    A^2 \xrightarrow{[\,u \, v\,]} A \lto 0
    \).

    For the statements about a finitely generated $R$-module $X$, let
    $0 \to \syz_1^R X \to F \to X \to 0$ be an exact sequence of $R$-modules with $F$ finitely
    generated free. Considered in $\Dsg(A)$ we have $F \cong 0$ by above, so the associated
    triangle gives $X \cong \Sigma\left(\syz_1^R X\right)$.  Applying this to $\syz_1^R X$ in
    place of $X$ gives $X \cong \Sigma^2\left( \syz_2^R X \right)$, and since
    $\Sigma^{-2} = \syz_2^A$ on $\Dsg(A)$, we may conclude $\syz_2^A X \cong \syz_2^R X$.  If
    $X$ is stable MCM over $R$, then $\syz_2^R X \cong X$. 
  \end{proof}

\begin{lemma}
  \label{lem:singular-locus}
  The singular locus of $A$ is contained in $V(u,v)$. Equivalently, $A_\p$ is a regular local
  ring for $\p \in \Spec A$ if $\p \not\supseteq (u,v)$. 
\end{lemma}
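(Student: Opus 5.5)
We need to show that $A_\p$ is regular whenever $\p \not\supseteq (u,v)$. By symmetry in $u$ and $v$ it suffices to treat the case $u \notin \p$; the case $v \notin \p$ is identical with the roles of $u$ and $v$ exchanged.

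The plan is to localize at $u$ and exhibit $A_u$ as a localization of a regular ring. In $A_u = B_u/(f+uv)$ the element $u$ is a unit, so the relation $f+uv=0$ can be solved as $v = -u^{-1}f$. Concretely, the $S[\![u]\!]$-algebra map
\[
S[\![u]\!]_u[v] \lto S[\![u]\!]_u, \qquad v \mapsto -u^{-1} f,
\]
is surjective with kernel generated by $v + u^{-1}f$, and this is a unit multiple of $f+uv$.

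The difficulty is that $B = S[\![u,v]\!]$ is a power series ring in $v$, not a polynomial ring, so $B_u$ is not simply $S[\![u]\!]_u[v]$. To handle this I would avoid a global identification and argue locally at $\p$. Let $\q \subset B$ be the preimage of $\p$, so $u \notin \q$ and $A_\p = B_\q/(g)$. Since $B$ is regular, $B_\q$ is a regular local ring. It therefore suffices to show $g = f+uv \notin \q^{(2)}B_\q$, that is, that $g$ is part of a regular system of parameters of $B_\q$; then $B_\q/(g)$ is regular.

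For this step I would use the derivation $\dd{v}$, which is a derivation of $B$ and extends to $B_\q$. We have $\dd{v}(g) = u$, and $u$ is a unit in $B_\q$. On the other hand, any derivation maps $\q^2 B_\q$ into $\q B_\q$, by the Leibniz rule. So if $g$ lay in $\q^2B_\q$, then $u = \dd{v}g$ would lie in $\q B_\q$, contradicting $u \notin \q$. Hence $g \in \q B_\q \setminus \q^2 B_\q$. (Note $g \in \q$, since $g \mapsto 0 \in \p$.) It follows that $B_\q/(g)$ is regular local.

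The only point needing care is the claim about derivations: for $a,b \in \q B_\q$ we have $\del(ab) = a\,\del b + b\,\del a \in \q B_\q$. This holds in any characteristic, because the coefficient of $v$ in $g$ is exactly $1\cdot u$ with no factor of $2$. This is precisely the asymmetry noted in Remark~\ref{rmk:jacobian-history}. The equivalence of the two formulations in the statement is immediate, since the singular locus is the set of $\p$ with $A_\p$ not regular.
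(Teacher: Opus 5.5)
Your argument is correct and is essentially the paper's proof: pass to $B_\q$ with $\q$ the preimage of $\p$, and use the derivation $\dd{v}$. It maps $\q^2 B_\q$ into $\q B_\q$ by the Leibniz rule, while $\dd{v}(g) = u$ is a unit, so $g \notin \q^2 B_\q$ and $B_\q/(g)$ is regular. The opening discussion of $A_u$ is never used and can be deleted, and the characteristic-free point is simply that $\dd{v}(g) = u$ is a unit; the Leibniz rule itself needs no justification in terms of the coefficient of $v$.
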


\begin{proof}
  Let $\p \in \Spec A$ with $(u,v) \not\subseteq \p$. At least one of $u$, $v$ must lie outside
  $\p$; by symmetry we may assume $u \notin \p$.

  Let $\q \subseteq B$ be the preimage of $\p$, so that $A_\p = B_\q/(g)B_\q$ and the image of
  $u$ is a unit in $B_\q$.  If $g \in \q^2 B_\q$, then we can write $g= \sum_i a_i b_i$ with all
  $a_i, b_i \in \q B_\q$; then
  \[
    \dd{v}g = \sum_i \left((\dd{v}a_i)b_i + a_i(\dd{v}b_i)\right) \in \q B_\q
  \]
  since every term contains a factor from $\q B_\q$.  But $\del g/\del v = u \notin \q B_\q$, a
  contradiction. This implies $g \notin \q^2 B_\q$ and $A_\p$ regular. 
\end{proof}

\begin{lemma}
  \label{lem:normal}
  The hyperbolic extension $A = B/(f+uv)$ is a normal domain.
\end{lemma}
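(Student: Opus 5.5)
I would use Serre's criterion: a Noetherian ring is normal iff it satisfies $(R_1)$ and $(S_2)$. First, $A = B/(g)$ is a hypersurface in the regular local ring $B$, hence Cohen--Macaulay, so $(S_2)$ holds automatically. Second, $A$ is local and normal, so it is a domain; concretely, a local normal ring is a product of normal domains, and a local ring has no nontrivial idempotents. It therefore remains to verify $(R_1)$: $A_\p$ is regular for every prime $\p$ of height at most one.

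For $(R_1)$ I would invoke Lemma~\ref{lem:singular-locus}. It says that $A_\p$ is regular unless $\p \supseteq (u,v)$. Since $u,v$ is a regular sequence on $A$, the ideal $(u,v)A$ has height $2$: $A$ is Cohen--Macaulay, so height equals grade for this ideal. Hence any prime containing $(u,v)$ has height at least $2$. Every prime of height $\le 1$ therefore avoids $V(u,v)$, and the localization there is regular.

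The one point to double-check is that $u,v$ really is a regular sequence on $A$, which the paper has already used. I would verify it as follows. Since $A/(u) \cong S[\![v]\!]/(f)$, it suffices that $u$ is a nonzerodivisor on $A$ and that $v$ is a nonzerodivisor on $S[\![v]\!]/(f)$. The second is clear because $f \neq 0$ and $S[\![v]\!]$ is a domain. The first holds because $u, g$ is a regular sequence in the domain $B$: $B/(u) = S[\![v]\!]$ and $g \equiv f \neq 0$ there. Permuting regular sequences in the local ring $B$ shows $g,u$ is regular, so $u$ is regular on $A$.

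I do not expect a genuine obstacle here; the only care needed is the height bookkeeping, and that all of this is characteristic-free. It is, since Lemma~\ref{lem:singular-locus} used only $\partial g/\partial v = u$.
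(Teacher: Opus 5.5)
Your proof is correct and is the paper's argument: Serre's criterion, with $(S_2)$ because the hypersurface $A$ is Cohen--Macaulay, and $(R_1)$ from Lemma~\ref{lem:singular-locus} together with $(u,v)A$ having height two; then $A$ is a domain because it is local. Your added check that $u,v$ is $A$-regular is fine, but one justification needs correcting: $v$ is regular on $S[\![v]\!]/(f)$ because this ring is $R[\![v]\!]$ (equivalently, because $v,f$ is regular in $S[\![v]\!]$ and one can permute), not merely because $f\neq 0$ in a domain.
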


\begin{proof}
  Since $u,v$ form a regular sequence in $A$, the ideal $(u,v)$ has height two. Then
  Lemma~\ref{lem:singular-locus} shows that $A_\p$ is regular for every prime $\p$ of height at
  most one, so $A$ satisfies $(R_1)$.  Being a hypersurface, $A$ is Cohen-Macaulay and so
  satisfies $(S_2)$. By Serre's criterion $A$ is a normal ring, and a domain since it is local.
\end{proof}

\begin{lemma}
  \label{lem:lift}
  Let $M$ be a MCM $R$-module without free direct summands. Then $M$ is indecomposable in
  $\MCM(R)$ if and only if it is indecomposable in $\uMCM(R)$. 
\end{lemma}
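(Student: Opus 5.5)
The plan is to compare the two endomorphism rings $\End_R(M)$ and $\uEnd_R(M)$ and use that $R$ is complete local, so that $\MCM(R)$ is a Krull--Schmidt category.

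\emph{Indecomposable in $\uMCM(R)$ implies indecomposable in $\MCM(R)$.} Suppose $M \cong M_1 \oplus M_2$ in $\MCM(R)$ with both summands nonzero. Since $M$ has no free summands, neither does $M_i$. A stable MCM module that is zero in $\uMCM(R)$ has identity map factoring through a free module, so it is a summand of a free module, hence free, hence zero (as it is stable). So each $M_i$ is nonzero in $\uMCM(R)$, and $M \cong M_1\oplus M_2$ is a nontrivial decomposition there.

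\emph{Indecomposable in $\MCM(R)$ implies indecomposable in $\uMCM(R)$.} Since $R$ is complete and $M$ is finitely generated and indecomposable, $E = \End_R(M)$ is a local ring. The ideal $\calp(M,M)$ of endomorphisms factoring through a free module is a two-sided ideal. I claim it is proper. If $1_M$ factored through a free module, $M$ would be a summand of a free module and thus free, contradicting stability. Hence $\uEnd_R(M) = E/\calp(M,M)$ is a nonzero quotient of a local ring, so it is local.

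Any decomposition $M \cong X \oplus Y$ in $\uMCM(R)$ gives an idempotent in $\uEnd_R(M)$. A local ring has only the idempotents $0$ and $1$, so one of $X$, $Y$ is zero in $\uMCM(R)$. Thus $M$ is indecomposable in the stable category.

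The only step needing care is verifying that $\calp(M,M)$ is a proper ideal, which uses precisely the no-free-summand hypothesis. The rest is standard Krull--Schmidt theory for complete local rings. No assumption on $k$ or its characteristic enters.
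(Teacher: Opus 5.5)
Your proof is correct and rests on the same idea as the paper's. Over the complete ring $R$, $\End_R(M)$ is local for indecomposable $M$, and the ideal $\calp_M$ of endomorphisms factoring through a free module is proper (hence inside the radical) precisely because $M$ is stable, so $\uEnd_R(M)$ is local and has no nontrivial idempotents.

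The paper takes a slightly longer route: it proves $\calp_M \subseteq \jac(\End_R(M))$ for every stable $M$ and lifts idempotents using semiperfectness of $\End_R(M)$, because that stronger idempotent-lifting statement is reused in Lemma~\ref{lem:thick}. Your shortcut suffices for the lemma as stated.
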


\begin{proof}
  A nontrivial decomposition $M= M_1 \oplus M_2$ in $\MCM(R)$ passes to one in $\uMCM(R)$,
  still nontrivial since $M$ and all summands of $M$ are stable. The content is the other
  direction.

  Let $\calp_M \subseteq \End_R(M)$ be the ideal of endomorphisms factoring through a free
  $R$-module; we will show that $\calp_M$ is contained in the Jacobson radical of
  $\End_R(M)$. Indeed, if $M$ is indecomposable then $\End_R(M)$ is a nc-local ring (in the
  terminology of~\cite[\S 1.1]{Leuschke-Wiegand:BOOK}, meaning that the quotient
  $\End_R(M)/\jac(\End_R(M))$ is a division ring). No element of $\calp_M$ can be an
  isomorphism, so we have $\calp_M \subseteq \jac(\End_R(M))$. For general stable $M$, write $M
  = \bigoplus_i M_i$ with each $M_i$ indecomposable nonfree. Then $\jac(\End_R(M))$ is the set
  of matrices $(\phi_{ij})$ each of whose entries $M_j \to M_i$ is a non-isomorphism. By the
  previous case no component of an element of $\calp_M$ can be an isomorphism, so again
  $\calp_M \subseteq \jac(\End_R(M))$.

  Now $\End_R(M)$ is a module-finite algebra over the complete local ring $R$, hence is
  semiperfect, i.e.\ $\End_R(M)$ is semilocal and idempotents of the quotient
  $\End_R(M)/\jac(\End_R(M))$ can be lifted to $\End_R(M)$. It follows that the idempotents of
  $\uEnd_R(M) = \End_R(M)/\calp_M$ can be lifted as well, finishing the proof.
\end{proof}

Now we begin the proof of Theorem~\ref{thm:A} in earnest. 
  Let $\cali$ be the essential image of $(-)^{\dbl}$ in $\Dsg(A)$. 

\begin{lemma}
  \label{lem:thick}
  The subcategory $\cali$ is a thick subcategory of $\Dsg(A)$: it is closed under isomorphisms,
  finite direct sums, $\Sigma^{\pm 1}$, cones and direct summands.
\end{lemma}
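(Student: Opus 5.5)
Throughout, identify $\Dsg(A)$ with $\uMCM(A)$ via Buchweitz's equivalence, and likewise for $R$, so that $(-)^{\dbl}$ becomes an exact functor $\Dsg(R)\to\Dsg(A)$ by Proposition~\ref{prop:functor-exact}. The easy closure properties come first. Closure under isomorphisms is part of the definition of essential image. Closure under finite direct sums holds because $(-)^{\dbl}$ is additive: the block matrix construction of Definition~\ref{defn:functor} visibly commutes with direct sums of matrix factorizations. Closure under $\Sigma^{\pm1}$ is Proposition~\ref{prop:functor-exact}(ii), together with the fact that $\Sigma$ is an auto-equivalence of $\uMF_S(f)$ (indeed $\Sigma^2\cong\id$). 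Equivalently, one can use Lemma~\ref{lem:HF}\eqref{item:Fsyz}.

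For cones, take $X^{\dbl}, Y^{\dbl}$ in $\cali$ and any morphism $h\colon X^{\dbl}\to Y^{\dbl}$ in $\Dsg(A)$. By full faithfulness (Theorem~\ref{thm:solberg}), $h=g^{\dbl}$ for some $g\colon X\to Y$ in $\uMCM(R)$. Complete $g$ to a triangle $X\to Y\to Z\to\Sigma X$. Exactness (Proposition~\ref{prop:functor-exact}(iii)) sends this to a distinguished triangle
\[
X^{\dbl}\to Y^{\dbl}\to Z^{\dbl}\to \Sigma X^{\dbl}.
\]
Uniqueness of cones up to isomorphism then gives $\cone(h)\cong Z^{\dbl}\in\cali$. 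If the cone is required for objects merely isomorphic to images, conjugate by the isomorphisms first.

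Closure under direct summands is the main obstacle, and here I would use Lemma~\ref{lem:lift} together with Krull--Schmidt. Suppose $N\oplus N'\cong M^{\dbl}$ in $\uMCM(A)$, and write $e$ for the idempotent of $\uEnd_A(M^{\dbl})$ projecting onto $N$. By full faithfulness, $e=\epsilon^{\dbl}$ for an idempotent $\epsilon\in\uEnd_R(M)$, where we may take $M$ stable. By Lemma~\ref{lem:lift}, or more precisely by its proof showing that idempotents of $\uEnd_R(M)$ lift to $\End_R(M)$, we can lift $\epsilon$ to an honest idempotent of $\End_R(M)$. This yields a decomposition $M=M_1\oplus M_2$ whose projection maps to $\epsilon$ in the stable category. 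Applying $(-)^{\dbl}$ gives $M^{\dbl}\cong M_1^{\dbl}\oplus M_2^{\dbl}$ with $e$ equal to the projection onto $M_1^{\dbl}$. Since both $N$ and $M_1^{\dbl}$ are images of the same idempotent $e$ in the idempotent-complete category $\uMCM(A)$, we get $N\cong M_1^{\dbl}$.

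That $\uMCM(A)$ is idempotent complete is needed for the phrase ``image of $e$'' to make sense. It holds because $A$ is complete local, so idempotents lift from $\uEnd_A$ to $\End_A$ by the same semiperfectness argument as in Lemma~\ref{lem:lift}. An alternative route is to decompose $M$ into indecomposables, apply Lemma~\ref{lem:lift} to see each summand stays indecomposable after passing through the fully faithful functor, since its endomorphism ring is local, and then invoke Krull--Schmidt in $\uMCM(A)$ to match $N$ with a subsum. I would present the idempotent argument, as it avoids this bookkeeping.
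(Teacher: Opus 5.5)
Your proof is correct and follows the paper's argument. You get cones from full faithfulness plus Proposition~\ref{prop:functor-exact}(iii), and summands by pulling the idempotent back to $\uEnd_R(M)$, lifting it to $\End_R(M)$ via the proof of Lemma~\ref{lem:lift}, and applying $(-)^{\dbl}$; the only superfluous point is your appeal to idempotent completeness of $\uMCM(A)$, since $e$ already splits through both $N$ and $M_1^{\dbl}$, and any two splittings of an idempotent in an additive category give isomorphic images.
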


\begin{proof}
  We have already observed (Theorem~\ref{thm:solberg}) that $(-)^{\dbl}$ is a functor of
  triangulated categories, i.e.\ commutes with $\Sigma^{\pm 1}$. Given
  $\Theta \colon X^{\dbl} \to Y^{\dbl}$, Thm.~\ref{thm:solberg} gives
  $\Theta \cong \theta^{\dbl}$ for some $\theta \colon X \to Y$, and then
  $\cone(\Theta) \cong \cone(\theta^{\dbl})\cong \cone (\theta)^{\dbl}$ by
  Prop.~\ref{prop:functor-exact}(iii), so $\cali$ is closed under cones.

  For summands, let $X$ be an object of $\Dsg(R)$ and let $Y$ be a summand of $X^{\dbl}$. Using
  Buchweitz's equivalence, we may choose a representative for $X$ which is a stable MCM
  $R$-module. Since the entries of the matrices in Definition~\ref{defn:functor} are those of
  $\phi$ and $\psi$ together with $\pm u$ and $\pm v$, and all live in the maximal ideal of
  $B$, it follows that $X^{\dbl}$, and hence $Y$, is stable too.  A decomposition $X^{\dbl}
  \cong Y \oplus Y'$ determines an idempotent $\tilde e \in \uEnd_A\left(X^{\dbl}\right)$, and by
  Theorem~\ref{thm:solberg} there is a unique $e \in \uEnd_R(X)$ with $\tilde e =
  e^{\dbl}$. Since $(e^2)^{\dbl} = {\tilde e}^2 = \tilde e = e^{\dbl}$ we get $e^2=e$.
  By the proof of Lemma~\ref{lem:lift}, $e \in \uEnd_R(X)$ lifts to an honest idempotent $\hat e
  \in \End_R(X)$, which determines a decomposition $X = \hat e X \oplus (1-\hat e) X$ in
  $\MCM(R)$.  Applying $(-)^{\dbl}$ gives $Y \cong \hat e X$. 
\end{proof}

\begin{proposition}
  \label{prop:density}
  Every object of $\Dsg(A)$ lies in $\cali$. Equivalently, for every MCM $A$-module $N$
  there is a MCM $R$-module $M$ with $N \cong M^{\dbl}$ in $\uMCM(A)$, and for $N$ stable
  this is an isomorphism of modules.
\end{proposition}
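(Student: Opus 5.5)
Given a MCM $A$-module $N$, I will show that $N$ is a direct summand of $\left(\flfl{N}\right)^{\dbl}$ in $\Dsg(A)$. Since $\flfl{N}$ is a MCM $R$-module, Lemma~\ref{lem:thick} then gives $N \in \cali$. The bridge is Lemma~\ref{lem:Rperfect}, extended from $R$-modules to $R$-complexes. For an MCM $R$-module $M$, regarded as an $A$-module, it gives $\syz_2^A M \cong M$ in $\Dsg(A)$. We need to compare this with $M^{\dbl}$. Theorem~\ref{thm:B} would give this directly, but it is proved later, so I will argue in the singularity category instead.

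\textbf{Step 1: two triangles for $N$.} Since $u,v$ is a regular sequence on $N$, the Koszul complex $K(u,v;N)$ resolves $\flfl{N}$. Its brutal truncations give triangles in $\Dsg(A)$:
\[
N \xrightarrow{v} N \to N/vN \to \Sigma N,
\qquad
N/vN \xrightarrow{u} N/vN \to \flfl{N} \to \Sigma(N/vN).
\]
By Corollary~\ref{cor:uv-kill}, multiplication by $u$ and by $v$ is zero on $\uHom_A$ between MCM modules. In $\Dsg(A)$ the object $N/vN$ is isomorphic to an MCM module (a syzygy shift), so these maps vanish there as well. A triangle with zero first map splits, so
\[
N/vN \cong N \oplus \Sigma N
\quad\text{and}\quad
\flfl{N} \cong N/vN \oplus \Sigma(N/vN) \cong N \oplus \Sigma N^{\oplus 2} \oplus \Sigma^2 N
\]
in $\Dsg(A)$. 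This is consistent with Corollary~\ref{cor:one-variable}. In particular, $N$ is a direct summand of $\flfl{N}$, viewed in $\Dsg(A)$.

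\textbf{Step 2: $M$ and $M^{\dbl}$ generate the same thick subcategory of $\Dsg(A)$.} Let $M$ be an MCM $R$-module regarded as an $A$-module. Apply Step 1 to $N' = M^{\dbl}$. By Lemma~\ref{lem:HF}\eqref{item:HF}, $\flfl{(M^{\dbl})} \cong M \oplus \syz_1^R M$ as $R$-modules, hence in $\Dsg(A)$. By Lemma~\ref{lem:Rperfect}, $\syz_1^R M \cong \Sigma^{-1} M$ in $\Dsg(A)$. Therefore
\[
M \oplus \Sigma^{-1} M \cong M^{\dbl} \oplus \Sigma (M^{\dbl})^{\oplus 2} \oplus \Sigma^2 M^{\dbl}.
\]
In particular $M$ is a direct summand of an object of $\cali$, since $\cali$ is closed under $\Sigma$ and finite sums. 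By Lemma~\ref{lem:thick}, $M \in \cali$.

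\textbf{Step 3: conclusion.} Take $M = \flfl{N}$, which is MCM over $R$. Step 2 gives $\flfl{N} \in \cali$, and Step 1 shows $N$ is a summand of it in $\Dsg(A)$. So $N \in \cali$ by Lemma~\ref{lem:thick}.

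For the module-level statement: if $N$ is stable, write $N \cong M^{\dbl}$ in $\uMCM(A)$ with $M$ stable. Here $M^{\dbl}$ is stable because its matrix factorization is reduced. Two stable MCM modules that are isomorphic in $\uMCM(A)$ are isomorphic as modules, because the corresponding reduced matrix factorizations are homotopy equivalent, hence isomorphic (Lemma~\ref{lem:hmf-umcm} together with Eisenbud's theorem).

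\textbf{Expected obstacle.} The main difficulty is justifying the vanishing in Step 1. The maps in question are multiplications by $u$ on the non-MCM module $N/vN$, taken in $\Dsg(A)$, not honest $\uHom$ between MCM modules. My first attempt is to note that multiplication by $u$ is a natural transformation of the identity functor on $\Dsg(A)$, which is a central action. By Buchweitz's equivalence, each object is isomorphic to an MCM module, and Corollary~\ref{cor:uv-kill} kills $u$ and $v$ there. By naturality, $u$ then acts as zero on every object of $\Dsg(A)$.
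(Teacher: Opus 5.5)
Your argument is correct and is essentially the paper's proof: Corollary~\ref{cor:uv-kill} splits the Koszul complex on $u,v$ in $\Dsg(A)$ to exhibit $N$ as a summand of $\flfl N$, and Lemma~\ref{lem:HF}\eqref{item:HF} together with thickness of $\cali$ puts every MCM $R$-module in $\cali$. There are two minor variations: in Step~2 you reuse the Step~1 splitting for $M^{\dbl}$ instead of invoking closure of $\cali$ under cones, and your central-action (naturality) argument makes explicit why $u$ vanishes on the non-MCM object $N/vN$, a point the paper passes over quickly.
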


\begin{proof}
  First we show that every MCM $R$-module, viewed as an $A$-module via restriction of scalars
  along $A \onto R$, lies in $\cali$. Let $M \in \MCM(R)$ and set $N = M^{\dbl}$. By
  Lemma~\ref{lem:HF}\eqref{item:HF} we have
  \[
    \flfl{N} = N/(u,v)N \cong M \oplus \syz_1^R(M)\,.
  \]
  Since $u,v$ is a regular sequence on $N$, the Koszul complex
  \[
    0 \lto N \xrightarrow{\left[\begin{smallmatrix} -v \\ u\end{smallmatrix}\right]}
    N \oplus N \xrightarrow{[\,u \;\; v\,]} N \lto 0
  \]
  is quasi-isomorphic to $M \oplus \syz_1^R M$. Since $N$ lies in $\cali$ and $\cali$ is closed
  under cones, the Koszul complex lies in $\cali$, and since $\cali$ is closed under direct
  summands we see that $M$ lies in $\cali$.

  Next, we show that every object $Y$ of $\Dsg(A)$ lies in $\cali$. By Buchweitz's equivalence,
  we may choose a representative for $Y$ which is a stable MCM $A$-module $N$. By
  Cor.~\ref{cor:uv-kill} multiplications by $u$ and $v$ are both $0$ in $\uEnd_A(N)$ and
  likewise on all the stable $\Hom$ groups $\uHom_A(\Sigma^i N, \Sigma^j N)$. As above, the
  Koszul complex on $u, v$ over $N$ is exact, so that in $\Dsg(A)$ we have
  \[
    \flfl N \cong \cone\left(\cone\left(N \xrightarrow{\,u\,} N\right)
    \xrightarrow{\,v\,} \cone\left(N \xrightarrow{\,u\,} N\right)\right)\,,
  \]
  an iterated cone construction. Both maps are zero in $\Dsg(A)$, and a triangle of the form
  $A \xrightarrow{\,0\,} A \to C \to \Sigma A$ always splits. Hence we have
  \[
    \flfl N \cong N \oplus (\Sigma N)^2 \oplus \Sigma^2 N
  \]
  in $\Dsg(A)$.  In particular  $N$ is a direct summand of $\flfl N$. But $\flfl N$ is a MCM
  $R$-module, hence lives in $\cali$ by the previous paragraph, and $\cali$ is closed under
  direct summands, so $N$ lies in $\cali$. 
\end{proof}

We restate Theorem~\ref{thm:A} for convenience.
{\def\thetheorem{A}
\begin{theorem}
  Let $(S, \n, k)$ be a complete regular local ring, let $0 \neq f \in \n^2$, and set
  $R = S/(f)$ and $A = R^{\dbl} = S[\![u,v]\!]/(f+uv)$. Then the Kn\"orrer functor
  \[
    (-)^{\dbl} \colon  \uMCM(R)  \lto  \uMCM(A)
  \]
  is an equivalence of triangulated categories. 
\end{theorem}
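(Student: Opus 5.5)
The proof of Theorem~\ref{thm:A} is now an assembly of results already established. An exact functor of triangulated categories is an equivalence as soon as it is fully faithful and dense (essentially surjective). Its inverse is then automatically exact, since a quasi-inverse of an exact equivalence can be given a compatible exact structure. So I would check three things in turn: exactness, full faithfulness, and density.

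Exactness of $(-)^{\dbl}\colon \uMCM(R)\to\uMCM(A)$ is Proposition~\ref{prop:functor-exact}. That proposition shows that the functor is well defined on homotopy classes and commutes with $\Sigma$. It also carries standard triangles to standard triangles, via the permutation isomorphism $\sigma$. All of this is transported to $\uMCM$ through Lemma~\ref{lem:hmf-umcm}.

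Full faithfulness is Theorem~\ref{thm:solberg}. That theorem rests on Solberg's computation, which Remark~\ref{rem:solberg-audit} checks to be characteristic-free and valid without a coefficient field. Lemma~\ref{lem:solberg} transports it from Solberg's $F$ to our $(-)^{\dbl}$.

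Density is Proposition~\ref{prop:density}. Every object of $\Dsg(A)\simeq\uMCM(A)$ is isomorphic to some $M^{\dbl}$ with $M$ a MCM $R$-module. Here we identify $\uMCM$ with $\Dsg$ on both sides via Buchweitz's theorem. Because $(-)^{\dbl}$ is compatible with these identifications, the essential image $\cali$ computed in $\Dsg(A)$ is the essential image in $\uMCM(A)$.

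The only point needing care is the density step, and it has already been handled in Proposition~\ref{prop:density}. In the theorem's proof I would just recall its structure, since it contains the characteristic-free input. First, Corollary~\ref{cor:uv-kill} shows that $u$ and $v$ act as zero on all stable $\Hom$ groups; the key fact is that $\dd{u}(f+uv)=v$ and $\dd{v}(f+uv)=u$ on the nose, with no factor of $2$. Second, this splits the Koszul triangles, so $N$ is a summand of $\flfl N$ in $\Dsg(A)$. Third, Lemma~\ref{lem:HF} together with the Koszul complex of $M^{\dbl}$ places every MCM $R$-module in $\cali$. Finally, $\cali$ is thick by Lemma~\ref{lem:thick}.

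I would point out two further things. Thickness, and in particular closure under direct summands, uses full faithfulness. It also uses idempotent lifting for $\End_R(M)$, which is where completeness of $S$ enters. No step invokes $\charac k\neq 2$, a coefficient field, or an isolated singularity. Putting these together, $(-)^{\dbl}$ is an exact, fully faithful, essentially surjective functor, hence an equivalence of triangulated categories.
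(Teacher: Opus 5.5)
Your proposal is correct and follows the paper's proof: the paper likewise obtains Theorem~A by combining exactness (Proposition~\ref{prop:functor-exact}) and full faithfulness (Theorem~\ref{thm:solberg}) with density (Proposition~\ref{prop:density}), which in turn rests on Corollary~\ref{cor:uv-kill}, Lemma~\ref{lem:HF}, and the thickness of $\cali$ from Lemma~\ref{lem:thick}. Your added remarks, that thickness needs full faithfulness and that completeness of $S$ enters through idempotent lifting, are also accurate.
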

}
\begin{proof}[Proof of Theorem~\ref{thm:A}]
  The triangulated functor $(-)^{\dbl}$ is fully faithful by Theorem~\ref{thm:solberg} and
  dense by Proposition~\ref{prop:density}. 
\end{proof}

The following corollary is~\cite[Thm.~8.33]{Leuschke-Wiegand:BOOK}, but without the standing
hypotheses there, namely that the residue field $k$ be algebraically closed of characteristic
different from $2$.

\begin{corollary}
  \label{cor:bijection}
  The functor $(-)^{\dbl}$ induces a bijection between the isomorphism classes of
  indecomposable non-free MCM $R$-modules and those of the indecomposable non-free MCM
  $A$-modules. 
\end{corollary}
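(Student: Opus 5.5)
The plan is to deduce the bijection from Theorem~\ref{thm:A} together with Lemma~\ref{lem:lift}, using the fact that stable MCM modules and their stable images correspond to each other. On the $R$ side, Eisenbud's correspondence (Lemma~\ref{lem:hmf-umcm}) identifies isomorphism classes of stable MCM $R$-modules with isomorphism classes of objects of $\uMF_S(f)$ having reduced representatives. Here I will use the standard fact that for stable MCM modules $M,M'$ over a local Gorenstein ring, $M\cong M'$ in $\uMCM$ if and only if $M\cong M'$ as modules. This holds because an isomorphism in $\uMCM$ lifts to maps $a\colon M\to M'$ and $b\colon M'\to M$ with $ba-1\in\calp_M$, and $\calp_M\subseteq\jac(\End_R(M))$ by the argument in the proof of Lemma~\ref{lem:lift}. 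Hence $ba$ is an automorphism, and symmetrically $ab$ is too. The same argument works verbatim over $A$, since $A$ is also a complete local hypersurface.

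The key steps, in order, are as follows. First, take an indecomposable non-free MCM $R$-module $M$. It is stable, since indecomposable and non-free, so $M^{\dbl}$ is stable: its matrix factorization is reduced because the entries of $\phi,\psi$ and $\pm u,\pm v$ all lie in the maximal ideal of $B$. By Lemma~\ref{lem:lift}, $M$ is indecomposable in $\uMCM(R)$, and since $(-)^{\dbl}$ is an equivalence, $M^{\dbl}$ is indecomposable in $\uMCM(A)$. Being stable, it is then indecomposable as a module by Lemma~\ref{lem:lift} applied over $A$; the proof of that lemma uses only that the ring is complete local, so it applies to $A$. This defines the map on isomorphism classes. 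Well-definedness is clear, since isomorphic modules give equivalent matrix factorizations, and these give isomorphic images.

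Second, injectivity. If $M^{\dbl}\cong (M')^{\dbl}$ as modules, then they are isomorphic in $\uMCM(A)$. Full faithfulness (Theorem~\ref{thm:solberg}) gives $M\cong M'$ in $\uMCM(R)$, and the lifting fact above gives $M\cong M'$ as modules.

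Third, surjectivity. Let $N$ be an indecomposable non-free MCM $A$-module; it is stable. Proposition~\ref{prop:density} gives a MCM $R$-module $M$, which we may take stable after discarding free summands, with $N\cong M^{\dbl}$ as modules. Since $N$ is indecomposable in $\uMCM(A)$ by Lemma~\ref{lem:lift}, $M$ is indecomposable in $\uMCM(R)$ by the equivalence. It is therefore indecomposable as a module by Lemma~\ref{lem:lift}, and non-free because it is stable and nonzero.

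I expect the main point requiring care to be the passage between stable isomorphism and honest isomorphism, together with checking that Lemma~\ref{lem:lift} and the radical argument hold over $A$ as well as $R$. Both rely only on completeness and locality, which hold for $A$ because $B=S[\![u,v]\!]$ is complete. Otherwise the argument is formal bookkeeping on top of Theorem~\ref{thm:A}.
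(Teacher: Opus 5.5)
Your proposal is correct and follows the paper's argument: Theorem~A combined with Lemma~\ref{lem:lift}, applied over both $R$ and $A$, to identify indecomposable objects of the stable categories with indecomposable non-free MCM modules. The paper compresses this into one sentence citing Krull--Remak--Schmidt. You additionally prove the step it leaves implicit, that stable modules which are isomorphic in $\uMCM$ are isomorphic as modules (via $\calp_M \subseteq \jac(\End(M))$).
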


\begin{proof}
  Both $\uMCM(R)$ and $\uMCM(A)$ have the Krull-Remak-Schmidt uniqueness property for direct-sum
  decompositions by~\cite[Cor.~1.10]{Leuschke-Wiegand:BOOK} and Lemma~\ref{lem:lift}, so the
  equivalence $(-)^{\dbl}$ induces a bijection on the isomorphism classes of the indecomposable objects.
\end{proof}

The next corollary generalizes the main result of \cite{Solberg}, without assuming existence of
almost split sequences. Combined with Greuel-Kr\"oning \cite{Greuel-Kroning} it gives the
``only if'' direction of \cite[Theorem 9.28]{Leuschke-Wiegand:BOOK}.

\begin{corollary}
  \label{cor:fCMt}
  The hypersurface defined by $f$ has finite (resp., countable) Cohen-Macaulay type if and only
  if that defined by $f + uv$ does. \qed
\end{corollary}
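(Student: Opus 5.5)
The plan is to deduce Corollary~\ref{cor:fCMt} directly from Corollary~\ref{cor:bijection}, with one extra check on the free modules. Finite (resp.\ countable) Cohen-Macaulay type of $R$ means that $\MCM(R)$ has only finitely (resp.\ countably) many isomorphism classes of indecomposable modules; likewise for $A$.

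In each of the local rings $R$ and $A$ there is exactly one indecomposable free module up to isomorphism, namely the ring itself. So the number of indecomposable MCM modules differs from the number of indecomposable \emph{non-free} MCM modules by exactly one. Finiteness and countability are insensitive to adding or removing a single element.

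Corollary~\ref{cor:bijection} gives a bijection between the isomorphism classes of indecomposable non-free MCM $R$-modules and those of indecomposable non-free MCM $A$-modules. Hence one set is finite (resp.\ countable) exactly when the other is, and adding back the one free indecomposable on each side gives the claim.

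I see no real obstacle here. The only points to confirm are that the definition of finite/countable CM type counts isomorphism classes of indecomposable MCM modules, and that Corollary~\ref{cor:bijection} is stated on that same level. It is: it is phrased in terms of actual modules rather than stable objects, thanks to Lemma~\ref{lem:lift} and the Krull-Remak-Schmidt property.
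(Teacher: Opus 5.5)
Your argument is correct and is the one the paper intends. The corollary is marked \qed as an immediate consequence of Corollary~\ref{cor:bijection}, and the only remaining content is the bookkeeping you supply: each local ring contributes exactly one free indecomposable, which does not affect finiteness or countability.
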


The next corollary completes Lemma~\ref{lem:HF}.

\begin{corollary}
  \label{cor:summand}
  Let $N$ be a stable MCM $A$-module. Then
  \[
    \left(\flfl N\right)^{\dbl} \cong N \oplus \syz_1^A N
  \]
  as $A$-modules; in particular $N$ is a direct summand of $\left(\flfl N\right)^{\dbl}$. 
\end{corollary}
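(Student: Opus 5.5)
By Proposition~\ref{prop:density}, since $N$ is stable we may write $N \cong M^{\dbl}$ as $A$-modules for some MCM $R$-module $M$. We may take $M$ stable: removing free summands of $M$ only removes free summands from $M^{\dbl}$ (the Kn\"orrer construction of a free $R$-module is free over $A$), and $N$ has none. The plan is to reduce the statement to Lemma~\ref{lem:HF} and compute both sides explicitly.

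\textbf{Left side.} By Lemma~\ref{lem:HF}\eqref{item:HF} there is an isomorphism of $R$-modules
\[
\flfl N \cong \flfl{\left(M^{\dbl}\right)} \cong M \oplus \syz_1^R M .
\]
The functor $(-)^{\dbl}$ on modules is additive, since the matrix factorization of a direct sum is block diagonal. Hence
\[
\left(\flfl N\right)^{\dbl} \cong M^{\dbl} \oplus \left(\syz_1^R M\right)^{\dbl}.
\]

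\textbf{Right side.} By Lemma~\ref{lem:HF}\eqref{item:Fsyz}, $\left(\syz_1^R M\right)^{\dbl} \cong \syz_1^A\left(M^{\dbl}\right) = \syz_1^A N$. Combining the two computations gives
\[
\left(\flfl N\right)^{\dbl} \cong N \oplus \syz_1^A N
\]
as $A$-modules. In particular $N$ is a direct summand of $\left(\flfl N\right)^{\dbl}$.

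\textbf{Main obstacle.} The one point needing care is that these are genuine module isomorphisms, not just stable ones. Lemma~\ref{lem:HF} is stated at the level of modules, so this is automatic for the two identifications it supplies. It remains to check that the module isomorphism $\flfl N \cong M \oplus \syz_1^R M$ yields an isomorphism after applying $(-)^{\dbl}$. Here $(-)^{\dbl}$ is defined on matrix factorizations, so I must argue it is well defined on modules up to isomorphism. By Eisenbud's theorem, isomorphic MCM modules have matrix factorizations that are equivalent up to adding trivial factorizations $(1,f)$ and $(f,1)$. Equivalent factorizations give equivalent factorizations of $f+uv$ via the block-diagonal maps of Proposition~\ref{prop:functor-exact}. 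The trivial summands are sent to $\left[\begin{smallmatrix} 1 & -v\\ u & f\end{smallmatrix}\right]$ and similar, whose cokernels are free; these contribute only free summands. Since $M$ and $\syz_1^R M$ are stable, these are precisely the summands we have arranged to be absent, so the isomorphism holds exactly.

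\textbf{Alternative.} One could instead argue via Corollary~\ref{cor:one-variable} and Theorem~\ref{thm:B}. The route through Lemma~\ref{lem:HF} is more direct, and I would use it.
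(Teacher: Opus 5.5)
Your proposal is correct and follows essentially the same route as the paper. Both write $N \cong M^{\dbl}$ with $M$ stable (the paper cites Corollary~\ref{cor:bijection}, you cite Proposition~\ref{prop:density}), then apply Lemma~\ref{lem:HF}\eqref{item:HF}, additivity of $(-)^{\dbl}$, and Lemma~\ref{lem:HF}\eqref{item:Fsyz}; your check that $(-)^{\dbl}$ is well defined on stable modules via reduced factorizations is the point the paper settles by noting that both sides are stable.
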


\begin{proof}
  By Corollary~\ref{cor:bijection} we can write $N \cong M^{\dbl}$ for some
  stable MCM $R$-module $M$. Then $\flfl N \cong M \oplus \syz_1^R M$ by
  Lemma~\ref{lem:HF}\eqref{item:HF}, so
  \[
    \left(\flfl N\right)^{\dbl} \cong \left(M \oplus \syz_1^R M\right)^{\dbl}
    \cong M^{\dbl} \oplus \left(\syz_1^R M\right)^{\dbl}
    \cong M^{\dbl} \oplus \syz_1^A\left(M^{\dbl}\right) = N \oplus \syz_1^A N\,,
  \]
  the last isomorphism by Lemma~\ref{lem:HF}\eqref{item:Fsyz}. Both sides are stable, so
  the stable isomorphism is an isomorphism of modules.
\end{proof}

\begin{example}
  \label{exam:non-isolated}
  Nothing above requires $f$ to be reduced or irreducible, or to define an isolated
  singularity. Take $\charac k = 2$, $S = k[\![x,y]\!]$ and $f = x^2y^2$. Then $A =
  k[\![x,y,u,v]\!]/(x^2y^2+uv)$ has singular locus $V(u,v) \cong \Spec
  k[\![x,y]\!]/(x^2y^2)$, of dimension one.  Theorem~\ref{thm:A} gives an equivalence $\uMCM(R)
  \simeq \uMCM(A)$.
  This example does not satisfy the hypotheses of~\cite{Knorrer} ($\charac k = 2$)
  or~\cite{Solberg} (not an isolated singularity) or~\cite{Dyckerhoff:2011} (fails Condition
  (B) since not an isolated singularity). 
\end{example}

\section{Kn\"orrer's functor as a second syzygy}\label{sect:syzygy}

This section proves Theorem~\ref{thm:B}. The argument is an explicit construction of a free
resolution which uses neither Theorem~\ref{thm:A} nor Section~\ref{sect:density}, and is
manifestly characteristic-free.

We keep the notation established in the previous sections, and restate Theorem~\ref{thm:B} for
convenience. 
{\def\thetheorem{B}
\begin{theorem}
  Let $M$ be a MCM $R$-module with no free direct summands, and regard $M$ as a
  $A$-module through $A \onto A/(u,v) = R$. Then
  \[
    \syz_2^A(M)  \cong M^{\dbl} \oplus \left(\syz_1^R M\right)^{\dbl}
    =  \left(M \oplus \syz_1^R M\right)^{\dbl}\,,
  \]
  and neither side has a free direct summand.
\end{theorem}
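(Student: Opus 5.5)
The plan is to build an explicit free $A$-resolution of $M$ by a mapping-cone (iterated Koszul) construction, and then read off its second syzygy. Write $M = \cok(\phi\colon G\to F,\psi\colon F\to G)$ with $(\phi,\psi)$ reduced of size $n$, and work with free $B$-modules, reducing modulo $g=f+uv$ at the end. Over $A$, $R = A/(u,v)$ has the Koszul resolution of Lemma~\ref{lem:Rperfect}. Tensor it with an $A$-lift of the $R$-presentation of $M$. Concretely, the first steps of an $A$-resolution are
\[
  F_A \xleftarrow{\ [\,\phi \;\; u \;\; v\,]\ } G_A \oplus F_A \oplus F_A \lto \cdots,
\]
where $(-)_A$ denotes base change to $A$, because $M = F_A/(\phi G_A + uF_A + vF_A)$.

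The key step is to compute the kernel of $[\,\phi\;\;u\;\;v\,]$ over $A$, i.e. $\syz_1^A M$, and then one more syzygy. I expect the relations to come in three families. The Koszul relation $(0,-v,u)$ is one. The relations $(\psi x, -?, \dots)$ come from $\phi\psi = f = -uv$ in $A$; explicitly $\phi\psi y + u(vy) = 0$ gives $(\psi y, vy, 0)$, and symmetrically $(\psi y, 0, uy)$. Finally, there are the relations $(u x, -\phi x, 0)$ and $(v x,0,-\phi x)$, which are Koszul relations between $\phi$ and $u,v$.

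I would organize these relations as a map from $F\oplus G\oplus G\oplus F$ (suitably indexed) and check exactness by a filtration argument. Modulo the regular sequence $u,v$ everything reduces to the periodic $R$-resolution of $M$, and $u,v$ is regular on all modules involved. This gives the second map $d_2$, and $\syz_2^A M = \im d_2 \cong \cok d_3$.

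Next I expect $d_2$ to decompose, after the permutation and sign changes ($\pm1$ only) allowed by Lemma~\ref{lem:HF}(ii) and Lemma~\ref{lem:solberg}, so that $\syz_2^A M$ is presented by the block-diagonal sum of
\[
  \begin{bmatrix}\phi & -v\\ u & \psi\end{bmatrix}
  \quad\text{and}\quad
  \begin{bmatrix}\psi & -v\\ u & \phi\end{bmatrix}.
\]
These are exactly the factorizations defining $M^{\dbl}$ and $(\syz_1^R M)^{\dbl}$.

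A rank check supports this. By Lemma~\ref{lem:rank} the right-hand side has rank $2n$, while the resolution $A^n \leftarrow A^{3n}\leftarrow\cdots$ of the torsion module $M$ gives $\rank\syz_1^A M = 2n$ and hence $\rank \syz_2^A M = 2n$ after the rank-$4n$ term. A cleaner route to verification is therefore to exhibit the complex
\[
  A^{n} \xleftarrow{d_1} A^{3n} \xleftarrow{d_2} A^{4n} \xleftarrow{\ \Phi\oplus\Phi'\ } A^{4n} \xleftarrow{\ \Psi\oplus\Psi'\ } A^{4n}\cdots
\]
and prove it is acyclic. Acyclicity from the matrix-factorization tail is automatic. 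It then suffices to show exactness at $A^{3n}$ and $A^{4n}$, which I would do by the Buchsbaum--Eisenbud criterion or by reducing modulo $(u,v)$ and applying Nakayama plus regularity.

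The main obstacle is finding the correct $d_2$, with correct signs, so that $d_1d_2=0$, $d_2(\Phi\oplus\Phi')=0$ in $A$, and the complex is exact at the middle terms. I would first try $d_2$ built from blocks $\pm u,\pm v,\phi,\psi,1$ mimicking the cone of $M^{\dbl}\to$ Koszul complex, and verify exactness via the Koszul filtration. Stability (no free summands) of both sides is immediate because all entries of the presenting matrices lie in the maximal ideal, given that $(\phi,\psi)$ is reduced; minimality of the resolution at that spot also confirms that $\syz_2^A M$ is computed without free summands.
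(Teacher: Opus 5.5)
Your outline has the same skeleton as the paper's proof. The gap is that the two exactness statements that make up the proof, $\ker d_1=\im d_2$ and $\ker d_2=\im d_3$, are only anticipated, and the tools you name for them do not work.

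Reducing modulo $(u,v)$ and invoking Nakayama needs the reduced complex to be exact at those spots. But for any $A$-free resolution $C$ of $M$ one has $H_i(C\otimes_A R)\cong\Tor_i^A(M,R)$. Because $u,v$ annihilate $M$, this is $M^{2}$ for $i=1$ and $M$ for $i=2$. So $C\otimes_A R$ is not the periodic $R$-resolution, and it fails to be exact exactly in the degrees you need (the same happens modulo $u$ alone). Buchsbaum--Eisenbud concerns finite free complexes, whereas this resolution is infinite, and $d_1,d_2$ do not even compose to zero over $B$. A sound filtration argument does exist: the spectral sequence of a perturbed tensor product $\tilde P\otimes K(u,v)$ filtered by resolution degree, whose $E^1$ page is the periodic $R$-resolution. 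But it runs on that complex, whose third differential is not block-diagonal, so you would still need a splitting step.

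Permutations and signs do not supply that step in general. For the cone-type $d_2$ with columns $(u,-\phi,0),(\psi,v,0),(0,v,-u),(-v,0,\phi)$, the kernel is the image of $\left[\begin{smallmatrix}\Psi & v\cdot 1\\ 0&\Phi\end{smallmatrix}\right]$. The paper removes the $v\cdot 1$ block by a unipotent change of basis coming from the Jacobian homotopy $(\partial_u\Psi)\Phi+\Psi(\partial_u\Phi)=v\cdot 1$.

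Your four non-Koszul families are in fact a good choice. Group them as $\{(\psi y,vy,0),(ux,-\phi x,0)\}$ and $\{(\psi y,0,uy),(vx,0,-\phi x)\}$ and take $d_3=\left[\begin{smallmatrix}\phi&-u\\ v&\psi\end{smallmatrix}\right]\oplus\Phi$; its first summand has cokernel $\cong\cok\Psi$. Then $d_2d_3=0$, and $d_3$ does present $\ker d_2$. However, that there are no mixed relations between the two groups is exactly what remains to be proved.

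Both holes are filled by short arguments from the paper.

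For $\ker d_1\subseteq\im d_2$: lift a relation to $B$, where it reads $\phi\hat W+u\hat X+v\hat Y=gH$. Subtracting $(\psi H,vH,0)$ gives a relation holding over $B$. Setting $u=v=0$ and using that $\phi$ is injective over $S$ gives $W=uW_1+vW_2$. Regularity of $u,v$ on $B^n$ then writes $(W,X,Y)$ as a combination of the remaining columns.

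For $\ker d_2=\im d_3$, use the rank count you already have; correctly, $\rank\syz_1^AM=n$ (not $2n$) and $\rank\syz_2^AM=2n$. Since $d_2d_3=0$, $d_2$ induces a surjection $\cok d_3\to\im d_2=\syz_2^AM$ between rank-$2n$ modules over the domain $A$. Its kernel is therefore torsion, hence zero inside the maximal Cohen--Macaulay (so torsion-free) module $\cok d_3$.

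With your grouping this completes the proof with no diagonalization at all. With the cone-type $d_2$ you also need the Jacobian-homotopy step.
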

}
\begin{proof}[Proof of Theorem~\ref{thm:B}]
  Let $(\phi\colon G \to F, \psi\colon F \to G)$ be a reduced matrix factorization of $f$ over
  $S$ with $\cok (\phi, \psi) = M$, and let $n$ be the size of the matrix $\phi$. We regard $M$
  as an $A$-module via $A \onto A/(u,v) = R$.

  As in~\cite[8.13]{Leuschke-Wiegand:BOOK}, we upgrade the free $S$-modules $F$ and $G$ to free
  $A$-modules $\tilde F = A \otimes_S F$ and $\tilde G = A \otimes_S G$, each isomorphic to
  $A^n$, but to keep the notation cleaner we use the same symbols $\phi$ and $\psi$ to denote
  the induced maps between them.

  First we observe that there is an exact sequence
  \[
    \tilde G \oplus \tilde F \oplus \tilde F
    \xto{\alpha}
    \tilde F \to M \to 0\,,
  \]
  where $\alpha = \left[\begin{smallmatrix}\phi & u 1_F & v 1_F\end{smallmatrix}\right]$. 
  Indeed, since $M \cong F /\phi(G)$ and is also killed by $u$ and $v$, we have $M \cong \tilde F
  /\left(\phi(\tilde G) + u\tilde F + v \tilde F\right)$.  In particular $\syz_2^AM = \ker \alpha$.
  
  Set
  \[
    \beta =
    \left[
      \begin{matrix}
        u & \psi & 0 & -v \\
        -\phi & v & v & 0 \\
        0 & 0 & -u & \phi
      \end{matrix}
    \right]
    :
    \tilde G \oplus \tilde F \oplus \tilde F \oplus \tilde G
    \to 
    \tilde G \oplus \tilde F \oplus \tilde F\,.
  \]
  Then $\alpha\beta = 0$ by direct computation.
  To show that $\ker \alpha \subseteq \im \beta$, let $(W, X, Y) \in \ker \alpha \subseteq
  \tilde G \oplus \tilde F \oplus \tilde F$ and lift to $\hat W, \hat X, \hat Y \in B^n$, so
  that
  \[
    \phi(\hat W) + u \hat X + v \hat Y = gH
  \]
  for some $H \in B^n$. Since $\phi \psi = f \cdot 1_n$ over $S$ and $g = f+uv$, this gives 
  \[
    \phi(\hat W) + u \hat X + v \hat Y = \phi\psi(H) + uv H\,,
  \]
  whence 
  \[
    \phi(\hat W-\psi(H)) + u (\hat X-vH) + v \hat Y = 0
  \]
  in $B^n$. In other words, $(\hat W - \psi(H), \hat X-vH, \hat Y)$ is in the kernel of
  $\alpha$ when considered as a matrix over $B$. Identifying the difference between these two
  lifts as coming from the second column of $\beta$, we may assume that $(W,X,Y)$ satisfies
  \begin{equation}
    \label{eq:Brel}
    \phi( W) + u X + v Y = 0
  \end{equation}
  on the nose in $B^n$, not just in $A^n$.

  Now, setting $u=v=0$ in \eqref{eq:Brel} gives $\phi(W) = 0$ in $B^n/(u,v)B^n = S^n$.  Since
  $\phi$ is injective over $S$, we have $W = 0$ in $S^n$, i.e.\ $W \in (u,v)B^n$. Write $W =
  uW_1 + vW_2$.  Then
  \[
    u(\phi(W_1)+X) + v(\phi(W_2) + Y) = 0\,.
  \]
  Since $u,v$ is a regular sequence on $B^n$, there exists $Z \in \tilde F$ such that
  \[
    \phi(W_1) + X = vZ \qquad \text{and} \qquad \phi(W_2) +Y = -uZ\,.
  \]
  In particular,
  \[
    (W,X,Y) = (uW_1, -\phi(W_1), 0) + (0,vZ,-uZ) + (vW_2, 0, -\phi(W_2))
  \]
  is in the image of the first, third, and fourth columns of $\beta$.

  Finally consider the matrix
  \[
    \gamma =
    \left[
      \begin{matrix}
        \psi & v & v & 0 \\
        -u & \phi & 0 & v \\
        0 & 0 & \phi& -v \\
        0 & 0 & u & \psi
      \end{matrix}
    \right]
    \colon
    \tilde F \oplus \tilde G \oplus \tilde G \oplus \tilde F
    \to
    \tilde G \oplus \tilde F \oplus \tilde F \oplus \tilde G\,.
  \]
  Once again $\beta \gamma =0$ as matrices over $A$. It follows that $\im \gamma \subseteq
  \ker \beta$, whence $\beta$ factors through $\cok \gamma$ and we obtain a surjection $\pi
  \colon \cok \gamma \to \im \beta = \syz_2^A M$.  We will see below that in fact $\pi$ is an
  isomorphism, so $\im \gamma = \ker \beta$ and $\syz_2^A M \cong \cok \gamma$. Before
  proving this fact, we identify $\cok \gamma$. 
  
  Note that the two diagonal $2n \times 2n$ blocks of $\gamma$ are the matrices of
  Definition~\ref{defn:functor} in reverse order. In particular, if we set $(\Phi, \Psi) =
  (\phi, \psi)^{\dbl}$, then
  \[
    \gamma = \left[ \begin{matrix} \Psi & v 1 \\ 0 & \Phi \end{matrix} \right]\,.
  \]
  Set $\del = \dd{u}$. Then $\del(f+uv) = v$, and the second equation of
  Lemma~\ref{lem:jacobian}, with $h = f+uv$, reads
  \[
    \left(\del \Psi\right) \Phi + \Psi \left(\del\Phi\right) = v 1\,.
  \]
  In particular
  \[
    \left[\begin{matrix} 1 & -\del \Psi \\ 0 & 1 \end{matrix} \right]
    \left[\begin{matrix} \Psi & v 1 \\ 0 & \Phi \end{matrix} \right]
    \left[\begin{matrix} 1 & -\del \Phi \\ 0 & 1 \end{matrix} \right]
    =
    \left[\begin{matrix} \Psi & 0 \\ 0 & \Phi \end{matrix} \right]\,.
  \]
  Checking back to Definition~\ref{defn:functor}, we see that
  \[
    \del_u \Psi = \left[\begin{matrix} 0 & 0 \\ -1 & 0 \end{matrix} \right]
    \qquad
    \text{and}
    \qquad
    \del_u \Phi = \left[\begin{matrix} 0 & 0 \\ 1 & 0 \end{matrix} \right]
  \]
  so this change of basis is defined and invertible over any ring.  It follows that
  \[
    \begin{split}
      \cok \gamma
      &\cong
      \cok \left[ \begin{matrix} \psi & v \\ -u & \phi\end{matrix}\right]
      \oplus
      \cok \left[\begin{matrix}\phi& -v \\ u & \psi \end{matrix}\right] \\
      &= \cok \Psi \oplus \cok \Phi \\
      &=\syz_1^A\left(M^{\dbl}\right) \oplus M^{\dbl} \\
      &=\left(\syz_1^R M\right)^{\dbl} \oplus M^{\dbl}\,.
    \end{split}
  \]
  In particular it follows that $\cok \gamma$ is a MCM $A$-module without free
  direct summands. As an $A$-module, it has rank $2n$ by Lemma~\ref{lem:rank}.
  
  Finally we observe that $\pi \colon \cok \gamma \to \im \beta$ is an isomorphism. Indeed, as
  an $A$-module $M$ has rank $0$ since it is $(u,v)$-torsion, so from the short exact sequence
  $0 \to \syz_1^A M \to \tilde F \to M \to 0$ we get $\rank \syz_1^A M = n$, and from
  $0 \to \syz_2^A M \to \tilde G \oplus \tilde F \oplus \tilde F \to \syz_1^A M \to 0$ we get
  $\rank \syz_2^A M = 2n$.  Thus $\pi$ is a surjection between two modules of the same rank
  $2n$; its kernel must have rank $0$ and thus be a torsion module. However $\ker \pi$ lives
  inside the MCM module $\cok \gamma$, and MCM modules over CM local rings are always
  torsion-free since the associated primes are a subset of the associated primes of the
  ring. Thus $\ker \pi = 0$.
\end{proof}

\begin{remark}
  \label{rmk:approximation}
  Theorem~\ref{thm:B} can be seen as identifying a maximal Cohen-Macaulay
  approximation. The forward direction of Buchweitz's equivalence $\Dsg(A) \simeq
  \uMCM(A)$ is induced on a finitely generated module $M$ by writing a CM approximation $0 \to
  Y \to X \to M \to 0$, where $X$ is MCM and $\pd_AY < \infty$. Since $Y$ is perfect, $M \cong
  X$ in $\Dsg(A)$. Over a Gorenstein ring, it is known (the so-called ``pitchfork
  construction'' of Auslander-Buchweitz~\cite{Auslander-Buchweitz}) that one may take $X =
  \Sigma^n \syz_n^A M$ for any $n \geq \codepth_A M$.
  
  For $M$ a MCM $R$-module, viewed as an $A$-module through $A \onto R$, we have
  $\codepth_A M = 2$ so the CM approximation is $\Sigma^2 \syz_2^A M$. But $\Sigma^2 = \id$ on
  MCM $A$-modules as $A$ is a hypersurface. Hence $M\cong\Sigma^2\syz^A_2 M \cong\syz^A_2M$ in
  $\Dsg(A)$, so $\syz^A_2M$ is the stable MCM $A$-module representing $M$, and by Theorem B
  it is $(M\oplus\syz^R_1M)^{\dbl}$.
\end{remark}

\section{Explicit matrices and the Euler class}\label{sect:matrices}

Corollary~\ref{cor:summand} asserts that
\[
    \left(N^{\flat\flat}\right)^{\dbl} \cong N \oplus \syz_1^A N
\]
for a stable MCM module over the hyperbolic extension $A$. In terms of matrix factorizations,
this solves the following rigidity problem.

\begin{corollary}\label{cor:matrices}
  Let $(\Phi,\Psi)$ be a reduced matrix factorization of $f+uv$ over $B$, and let
  $\phi = \Phi|_{u=v=0}$, $\psi = \Psi|_{u=v=0}$, so that $(\phi, \psi)$ is a matrix
  factorization of $f$ over $S$ with cokernel $N^{\flat\flat}$. Then the two matrices
\[
    \left[\begin{matrix} \phi & -v1 \\ u1 & \psi\end{matrix}\right]
    \qquad \text{and} \qquad
    \left[\begin{matrix}\Phi & \\ & \Psi\end{matrix}\right]
\]  
are equivalent over $B$. 
\end{corollary}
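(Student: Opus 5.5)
The plan is to translate Corollary~\ref{cor:summand} into matrix language using the fact that over the complete local ring $B$, two square matrices with entries in the maximal ideal are equivalent exactly when their cokernels are isomorphic as $B$-modules.

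Set $N = \cok \Phi$, a stable MCM $A$-module since $(\Phi,\Psi)$ is reduced.

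\emph{Step 1: identify $\flfl N$.} Tensoring the presentation $B^m \xrightarrow{\Phi} B^m \to N \to 0$ with $S = B/(u,v)$ is right exact, so $\flfl N = N/(u,v)N \cong \cok \phi$. The pair $(\phi,\psi)$ is a matrix factorization of $f$, obtained by setting $u=v=0$ in $\Phi\Psi = (f+uv)1$, and its entries lie in $\n$. Hence $(\flfl N)^{\dbl} = \cok\left[\begin{smallmatrix} \phi & -v1 \\ u1 & \psi\end{smallmatrix}\right]$ by Definition~\ref{defn:functor}.

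\emph{Step 2: identify the other cokernel.} By~\eqref{eq:reverse} applied over $B$ to $g = f+uv$, we have $\cok \Psi \cong \syz_1^A N$. So the block-diagonal matrix has cokernel $N \oplus \syz_1^A N$. Corollary~\ref{cor:summand} then gives an isomorphism of $A$-modules, hence of $B$-modules,
\[
\cok\left[\begin{smallmatrix} \phi & -v1 \\ u1 & \psi\end{smallmatrix}\right] \cong \cok\left[\begin{smallmatrix}\Phi & \\ & \Psi\end{smallmatrix}\right].
\]

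\emph{Step 3: pass from isomorphic cokernels to equivalent matrices.} Both matrices are injective over the domain $B$, since each has a companion matrix whose product with it is $g\cdot 1$. They have entries in the maximal ideal of $B$: this uses reducedness of $(\Phi,\Psi)$ together with $u,v\in\n_B$ and $\phi,\psi$ reduced. Each is therefore a minimal free resolution over the local ring $B$ of its cokernel. Minimal resolutions are unique up to isomorphism of complexes, so there are invertible $P,Q$ with
\[
P\left[\begin{smallmatrix} \phi & -v1 \\ u1 & \psi\end{smallmatrix}\right]Q = \left[\begin{smallmatrix}\Phi & \\ & \Psi\end{smallmatrix}\right].
\]
This argument also shows the sizes agree, namely $2n = 2m$, where $n$ is the size of $\phi$ and $m$ the size of $\Phi$.

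The only delicate point is Step 3's reliance on minimality. If $\phi$ had a unit entry, the cokernels would still be isomorphic but the sizes could differ. The hypothesis that $(\Phi,\Psi)$ is reduced is exactly what rules this out, because reducing entries mod $(u,v)$ preserves membership in the maximal ideal. With that checked, the rest is a direct reformulation of Corollary~\ref{cor:summand}.
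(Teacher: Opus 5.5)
Your proof is correct and is essentially the paper's argument. The paper presents Corollary~\ref{cor:matrices} as the matrix-factorization translation of Corollary~\ref{cor:summand}, and your Step~3 (uniqueness of minimal free resolutions over $B$) just makes explicit Eisenbud's correspondence between reduced matrix factorizations and stable MCM modules. The ``delicate point'' you flag about sizes is not actually an issue: $\phi=\Phi|_{u=v=0}$ has the same size $m$ as $\Phi$ by construction, so both matrices are automatically $2m\times 2m$.
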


\begin{question}\label{quest:matrices}
Can one express the equivalence of Cor.~\ref{cor:matrices} by explicit matrices over $B$?
\end{question}

We first observe that there is no single universal answer to Question~\ref{quest:matrices} for
all input data.

\begin{prop}
\label{prop:no-formula} Let $f$, $u$, $v$, $\Phi$, $\Psi$ be indeterminates over $\ZZ$ and let
\[
\calu = \ZZ[f,u,v]\langle\Phi,\Psi\rangle/\left(\Phi\Psi - (f+uv), \Psi\Phi - (f+uv)\right)
\]
be the generic algebra representing the data of a matrix factorization of $f+uv$. Then
    \[ v \notin \Psi \calu + \calu \Phi\,.
    \] Consequently there is no universal formula, polynomial in $f$, $u$, $v$, $\Phi$, and
$\Psi$, with integer coefficients, clearing the off-diagonal blocks in
$\left[\begin{smallmatrix} \phi & -v1 \\ u1 & \psi\end{smallmatrix}\right]$.
\end{prop}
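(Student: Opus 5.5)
The plan is to detect non-membership through a ring homomorphism out of $\calu$. Any ring homomorphism $\rho\colon \calu \to T$ carries the right ideal $\Psi\calu$ into $\rho(\Psi)T$ and the left ideal $\calu\Phi$ into $T\rho(\Phi)$. It therefore suffices to find a single commutative target $T$, together with images of $\Phi$ and $\Psi$ satisfying the two defining relations, in which $\rho(v)$ lies outside $\rho(\Psi)T + T\rho(\Phi)$.

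Concretely, take $T = \ZZ[x,y,u,v]$ to be a polynomial ring in four variables. Define $\rho$ on generators by $u \mapsto u$, $v \mapsto v$, $\Phi \mapsto x$, $\Psi \mapsto y$ and $f \mapsto xy - uv$. The relations hold because $\rho(\Phi)\rho(\Psi) = xy = \rho(f) + uv = \rho(\Psi)\rho(\Phi)$, so $\rho$ descends to $\calu$. If $v = \Psi a + b\Phi$ held in $\calu$, then applying $\rho$ would give $v \in (x,y)T$. This is impossible: setting $x = y = 0$ defines a map $T \to \ZZ[u,v]$ that kills $(x,y)$ but sends $v$ to $v \neq 0$. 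This gives $v \notin \Psi\calu + \calu\Phi$. This is a $1\times 1$ matrix factorization, which is enough for the universal statement.

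For the consequence, I would make precise what a universal clearing formula means. Following the computation in the proof of Theorem~\ref{thm:B}, clearing the off-diagonal block of the upper-triangular form $\left[\begin{smallmatrix} \Psi & v1 \\ 0 & \Phi\end{smallmatrix}\right]$ by elementary block operations $\left[\begin{smallmatrix} 1 & -S \\ 0 & 1\end{smallmatrix}\right](-)\left[\begin{smallmatrix} 1 & -T \\ 0 & 1\end{smallmatrix}\right]$ requires $v - S\Phi - \Psi T = 0$. If $S$ and $T$ were given by integer polynomials in $f,u,v,\Phi,\Psi$, valid for all input data, then this identity would hold in the generic algebra $\calu$. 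That contradicts the first part.

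The main obstacle is this last step, namely pinning down the class of allowed block operations so that any universal equivalence reduces to an identity of the form $v = \Psi T + S\Phi$ in $\calu$. I would handle it by restricting to unipotent block-triangular changes of basis, as in the proof of Theorem~\ref{thm:B}, and noting that the derivation solution $S = \del\Psi$, $T = \del\Phi$ used there is exactly of this shape but is not polynomial in the symbols $\Phi,\Psi$ themselves. The non-membership argument itself is routine once the specialization $\rho$ is chosen.
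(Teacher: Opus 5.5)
Your proof is correct, but it argues differently from the paper. The paper computes inside $\calu$. It asserts a $\ZZ[f,u,v]$-basis $1,\Phi,\Psi,\Phi^2,\Psi^2,\dots$, lists the multiplication rules, and observes that every element of $\Psi\calu+\calu\Phi$ has coefficient of $1$ in $(f+uv)$, whereas $v\notin(f+uv)$. You instead detect non-membership through a homomorphism to a commutative ring. This uses only the universal property of $\calu$ and a one-line check of the relations, so you never need the normal form, which the paper asserts without a diamond-lemma style verification.

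The obstruction is the same in both arguments. Your composite $\calu\to T\to\ZZ[u,v]$ sends $\Phi,\Psi\mapsto 0$ and $f\mapsto -uv$, which is precisely ``coefficient of $1$ modulo $f+uv$''. You could map directly to $\ZZ[u,v]$ this way and skip $T$. Note also that the relations force $\Phi\Psi=\Psi\Phi$, so $\calu$ is commutative. Your $\rho$ is in fact an isomorphism $\calu\cong\ZZ[x,y,u,v]$ (eliminate $f=\Phi\Psi-uv$). This explains the paper's basis, and shows that the statement is just $v\neq 0$ in $\calu/(\Phi,\Psi)\cong\ZZ[f,u,v]/(f+uv)$.

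The paper gives no separate argument for the ``consequently'' clause, so your reduction to unipotent block operations on $\left[\begin{smallmatrix}\Psi & v1\\ 0 & \Phi\end{smallmatrix}\right]$ is a reasonable way to make it precise. That matrix has the shape of $\gamma$ in the proof of Theorem~\ref{thm:B}. Since $\Psi\calu+\calu\Phi=(\Phi,\Psi)$, the same obstruction applies to the $u$- and $v$-blocks of $\cala$ in Lemma~\ref{lem:easyresn}. Keep in mind that this excludes only formulas of that shape. It does not exclude arbitrary universal equivalences with the displayed matrix $\left[\begin{smallmatrix}\phi & -v1\\ u1 & \psi\end{smallmatrix}\right]$, a stronger reading that neither you nor the paper proves.
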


\begin{proof} The algebra $\calu$ is free over $\ZZ[f,u,v]$ on the basis $\left\{1, \Phi, \Psi,
\Phi^2, \Psi^2, \dots\right\}$. The relations on this basis are generated by
\[
    \begin{split} \Phi\Phi^a = \Phi^{a+1} \,, &\qquad \Psi \Psi^a = \Psi^{a+1} \\ \Phi \Psi^a =
(f+uv)\Psi^{a-1} \,, &\qquad \Psi \Phi^a = (f+uv) \Phi^{a-1}
    \end{split}
\] for all $a \geq 1$. It follows that for every element of $\Psi \calu + \calu \Phi$, the
coefficient of the basis element $1$ is contained in the ideal $(f+uv)\ZZ[f,u,v]$. But $v
\notin (f+uv)$.
\end{proof}

Lemma~\ref{lem:jacobian} and the proof of Theorem~\ref{thm:B} hint that the answer should
involve homotopies, as we now explain.

Let $N = \cok (\Phi,\Psi)$ be a stable MCM $A$-module. The proof of Theorem~\ref{thm:B}
constructs an $A$-free resolution of $M = N^{\flat\flat}$ starting from
\[ \alpha = \left[\begin{matrix}\Phi|_{u=v=0} & u1 & v1\end{matrix}\right]\,.
\] However, one could equally well, in this case, start from $\left[\begin{smallmatrix}\Phi &
u1 & v1\end{smallmatrix}\right]$, which has the same cokernel. This simplifies the argument
slightly, and eliminates one row of the matrix $\beta$, since instead of $\phi\psi = f\cdot 1 =
-uv\cdot 1$ in $A$, we now have $\Phi \Psi = g \cdot 1 = 0$ in $A$.  We leave the details of
the proof to the reader.

\begin{lemma}
\label{lem:easyresn} In the notation of \S\ref{sect:syzygy}, we have an exact sequence of free
$A$-modules
\[ \tilde G \oplus \tilde F \oplus \tilde F \oplus \tilde G \xto{\cala} \tilde F \oplus \tilde
G \oplus \tilde G \oplus \tilde F \xto{d_2} \tilde G \oplus \tilde F \oplus \tilde F \xto{d_1}
\tilde F
     \to
     N^{\flat\flat}
     \to 0\,,
     \]
where 
\[
    \begin{split}
    d_1 &= \left[\begin{matrix} \Phi & u1 & v1 \end{matrix}\right]\,,\\
    d_2 &= \left[
        \begin{matrix} 
            \Psi & u & v & 0 \\
            0 & -\Phi & 0 & -v \\
            0 & 0 & -\Phi & u\end{matrix}
        \right] \,,\\
    \cala &= \left[
        \begin{matrix}
            \Phi & u  & v & 0 \\
             & -\Psi & 0 & -v \\
             & & -\Psi & u \\
             & & & \Phi
        \end{matrix}
        \right]\,.
    \end{split}
  \]
  In particular $\syz_2^A(N^{\flat\flat}) \cong \cok \cala$. Moreover, $(\cala, \calb)$ is a
matrix factorization of $g=f+uv$ over $B$, where $\calb$ is obtained from $\cala$ by swapping
$\Phi$ and $\Psi$.
\end{lemma}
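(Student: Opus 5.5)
We will follow the template of the proof of Theorem~\ref{thm:B}, now with $(\Phi,\Psi)$ a reduced matrix factorization of $g$ over $B$ with $N=\cok\Phi$. Here $\tilde F,\tilde G$ denote the free $A$-modules of rank $n$ on which $\Phi,\Psi$ act. The first step is the presentation: $N^{\flat\flat}=N/(u,v)N=\tilde F/(\Phi(\tilde G)+u\tilde F+v\tilde F)$, so $d_1$ presents $N^{\flat\flat}$. Next one checks $d_1d_2=0$ and $d_2\cala=0$ over $A$. This uses only $\Phi\Psi=g=0=\Psi\Phi$ in $A$ and the commutation of scalars with the blocks. For instance, the second column of $d_1d_2$ is $\Phi u-u\Phi=0$, the fourth is $-uv+vu=0$, and the first column of $d_2\cala$ is $\Psi\Phi=0$. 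Similarly, $(\cala,\calb)$ is a matrix factorization of $g$: $\cala\calb$ is block upper triangular with diagonal $\Phi\Psi,\Psi\Phi,\Psi\Phi,\Phi\Psi$, all equal to $g$. The off-diagonal blocks cancel pairwise; for example, the $(1,2)$ block is $\Phi u-u\Phi=0$ and the $(2,4)$ block is $\Psi(-v)\cdot(-1)\ldots$, and each such block is a signed difference of the same monomial. This is a routine check.

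The second step is exactness at $\tilde G\oplus\tilde F\oplus\tilde F$. Take $(W,X,Y)\in\ker d_1$ and lift it to $B$, so that $\Phi W+uX+vY=gH$. Since $g H=\Phi\Psi H$ already in $B$ (we no longer need to split $g=f+uv$), subtracting the first column of $d_2$ applied to $H$ reduces us to $\Phi W+uX+vY=0$ in $B^n$. Now reduce modulo $(u,v)$. We get $\phi(\bar W)=0$ over $S$, and $\phi$ is injective because $\det\phi\ne0$. Hence $W=uW_1+vW_2$, and $u(\Phi W_1+X)+v(\Phi W_2+Y)=0$. Regularity of $u,v$ on $B^n$ then gives $Z$ with $\Phi W_1+X=vZ$ and $\Phi W_2+Y=-uZ$. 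We need to write $(W,X,Y)$ using the columns $(\Psi,0,0)$, $(u,-\Phi,0)$, $(v,0,-\Phi)$ and $(0,-v,u)$. Take
\[
(W,X,Y)=(uW_1,-\Phi W_1,0)+(vW_2,0,-\Phi W_2)+(0,vZ,-uZ),
\]
which is the image of $(0,W_1,W_2,-Z)$. So $\ker d_1=\im d_2$, and $\syz_2^A N^{\flat\flat}=\im d_2$.

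The third step is exactness at the next spot, and this is where we expect the main obstacle. Rather than chase kernels, we mimic the rank argument used for Theorem~\ref{thm:B}. Since $(\cala,\calb)$ is a matrix factorization of $g$, the module $\cok\cala$ is MCM, hence torsion-free. Since $d_2\cala=0$, we get a surjection $\pi\colon\cok\cala\to\im d_2$. Compute ranks as before. $N^{\flat\flat}$ is $(u,v)$-torsion, so it has rank $0$; hence $\rank\syz_1^A=n$ and $\rank\im d_2=3n-n=2n$. On the other side, Eisenbud's determinant criterion applied to the block upper triangular $\cala$ gives $\det\cala=\det\Phi^2\det\Psi^2$. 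Using $\det\Phi\det\Psi=g^{n}$, this equals $g^{2n}$, so $\rank\cok\cala=2n$. (Alternatively, invoke Lemma~\ref{lem:rank} after conjugating as below.) Then $\ker\pi$ has rank $0$ inside a torsion-free module, so $\ker\pi=0$. This yields $\im\cala=\ker d_2$ and $\syz_2^A(N^{\flat\flat})\cong\cok\cala$.

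If one wants the identification with $N\oplus\syz_1^A N$ as a cross-check, one can use Lemma~\ref{lem:jacobian} with $\partial_u,\partial_v$ to clear the off-diagonal $u,v$ blocks of $\cala$ up to homotopy, as in the proof of Theorem~\ref{thm:B}. However, this is not needed for the lemma as stated.
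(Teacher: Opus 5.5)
Your proposal is correct and is essentially the adaptation of the proof of Theorem~\ref{thm:B} that the paper leaves to the reader: the lift is absorbed by the column $(\Psi,0,0)^T$ because $\Phi\Psi=g$ holds on the nose over $B$, and $\ker d_1$ is then decomposed exactly as in the Theorem~B argument. Obtaining $\rank\cok\cala=2n$ from $\det\cala=(\det\Phi\det\Psi)^2=g^{2n}$ is the right substitute for the splitting of $\cok\gamma$ used there. Your parenthetical alternative is unnecessary and incomplete as stated: the Jacobian homotopies alone only bring $\cala$ to the form of Proposition~\ref{prop:four-blocks}, not to block-diagonal form.
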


The goal now is to clear the off-diagonal blocks of $\cala$.  Denote
\[
  t_u = \dd{u}\Phi\,, \quad s_u = \dd{u}\Psi\,, \quad t_v = \dd{v}\Phi\,, \quad s_v =
\dd{v}\Psi\,,
\]
so that Lemma~\ref{lem:jacobian} yields homotopies
\begin{equation}  \label{eq:four-homotopies}
\begin{aligned}
  \Phi s_v + t_v \Psi &= u1 \,, & \Psi t_u +
                                    s_u \Phi &= v1\,,\\
  \Psi t_v + s_v \Phi &=u 1 \,,& \Phi s_u + t_u \Psi &=v1\,.
\end{aligned}
\end{equation}
Write $D = \diag(\Phi, -\Psi, -\Psi, \Phi)$, so that $\cala = D + U$ with $U$ strictly block
upper-triangular and $\cok D \cong N \oplus \syz_1^A N \oplus \syz_1^A N \oplus \syz_2^A N$.

\begin{prop}
\label{prop:four-blocks}
    Set 
    \[
        X = \left[\begin{matrix}
            0 & t_v & t_u & 0 \\
             & 0 & 0 & s_u \\
             & & 0 & -s_v \\
             & & & 0
             \end{matrix}\right]\,,
        Y = \left[\begin{matrix}
            0 & -s_v & -s_u & 0 \\
             & 0 & 0 & -t_u \\
             & & 0 & t_v \\
             & & & 0
             \end{matrix}\right]\,,
    \]
    both strictly block upper-triangular matrices, so that $1+X$ and $1+Y$ are invertible over $B$. Then 
    \[
        (1+X)\cala (1+Y) = D + Z\,,
    \]
    where $Z$ has all entries equal to zero except for 
    \[
      \begin{split}
        Z_{14} &= t_v \Psi t_u - t_u \Psi t_v \\
        &= \Phi(s_ut_v-s_vt_u) + E(\Phi)\,,
      \end{split}
    \]
    with $E = u \dd{u} - v \dd{v}$. 
\end{prop}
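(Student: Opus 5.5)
The plan is a direct block-matrix computation: expand the product, then use the four homotopies \eqref{eq:four-homotopies} to cancel every off-diagonal block except the $(1,4)$ block. Write $\cala = D + U$, where the only nonzero blocks of $U$ are
\[
U_{12} = u1,\quad U_{13} = v1,\quad U_{24} = -v1,\quad U_{34} = u1.
\]
Since $X$ and $Y$ are strictly block upper-triangular with square-zero-type nilpotency, $1+X$ and $1+Y$ are invertible over any ring. Expanding gives
\[
(1+X)\cala(1+Y) = D + U + XD + DY + XU + UY + XDY + XUY .
\]
Every product of two or more strictly upper-triangular factors lands strictly above the diagonal, and a product of three such factors can only be nonzero in block $(1,4)$. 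So the diagonal is exactly $D$, and I only need to examine the blocks $(1,2),(1,3),(2,4),(3,4)$ and then $(1,4)$.

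For the superdiagonal blocks only $U + XD + DY$ contributes. In each case the result is the difference of the two sides of one equation in \eqref{eq:four-homotopies}:
\[
\begin{aligned}
(1,2)&:\ u - t_v\Psi - \Phi s_v = 0, &\qquad (1,3)&:\ v - t_u\Psi - \Phi s_u = 0,\\
(2,4)&:\ -v + s_u\Phi + \Psi t_u = 0, &\qquad (3,4)&:\ u - s_v\Phi - \Psi t_v = 0 .
\end{aligned}
\]
Here I use $D_{22} = D_{33} = -\Psi$ and $D_{44} = \Phi$.

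For block $(1,4)$, the terms $XD$ and $DY$ contribute nothing, since $X_{14} = Y_{14} = 0$. The term $XUY$ also vanishes, because it would require $Y_{44} \neq 0$. The remaining contributions are
\[
XU + UY = (-v t_v + u t_u) + (-u t_u + v t_v) = 0,
\]
which cancels because $u,v$ are central scalars, and
\[
XDY = t_v(-\Psi)(-t_u) + t_u(-\Psi)t_v = t_v\Psi t_u - t_u\Psi t_v .
\]
This gives the first expression for $Z_{14}$.

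For the second expression, substitute $t_v\Psi = u - \Phi s_v$ and $t_u\Psi = v - \Phi s_u$ from \eqref{eq:four-homotopies}:
\[
Z_{14} = \Phi(s_u t_v - s_v t_u) + u\,t_u - v\,t_v .
\]
By definition $u\,t_u - v\,t_v = E(\Phi)$.

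The only real obstacle is bookkeeping. The signs in $D$ and the placement of $t$ versus $s$ in $X$ and $Y$ must match the correct one of the four homotopies. I would therefore verify the four superdiagonal cancellations first, and confirm that the three-factor terms vanish outside block $(1,4)$, before simplifying $Z_{14}$.
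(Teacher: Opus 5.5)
Your proposal is correct and follows the paper's proof: you use the same expansion, the same four homotopies to clear the off-diagonal blocks of $\cala$, the same decomposition of the corner block as $(XU)_{14}+(UY)_{14}+(XDY)_{14}$, and the same substitutions $t_v\Psi=u-\Phi s_v$, $t_u\Psi=v-\Phi s_u$. Two small bookkeeping points. First, at blocks $(1,3)$ and $(2,4)$ (and at $(2,3)$), the terms $XU$, $UY$, $XDY$ vanish because the $(2,3)$ blocks of $X$, $U$, $Y$ are all zero, not by your three-factor argument. Second, $X^2$ need not vanish (its $(1,4)$ block is $t_vs_u-t_us_v$), so ``square-zero'' is inaccurate, though nilpotency alone makes $1+X$ and $1+Y$ invertible.
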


\begin{proof}
  The proof is matrix multiplication, using the four equations~\eqref{eq:four-homotopies} to
  kill off the four off-diagonal entries of $\cala$. The top right corner entry of
  $(1+X)\cala(1+Y)$ is  
  \[
    \cala_{14} + (XU)_{14} + (UY)_{14} + (XDY)_{14} 
    = 
    0 + (ut_u - vt_v) + (vt_v - ut_u) + (t_v \Psi t_u - t_u \Psi t_v)\,.
  \]
  For the second expression, use~\eqref{eq:four-homotopies} to rewrite $t_u \Psi = v- \Phi
  s_u$ and $t_v \Psi = u-\Phi s_v$ so that  
   \[
     Z_{14} = ut_u - vt_v + \Phi(s_u t_v - s_v t_u)\,.
   \]
\end{proof}

To proceed, we introduce a $\ZZ$-grading on $B$. Set 
\[
  \deg u = +1\,, \qquad \deg v = -1\,, \qquad \deg S = 0\,.
\]
Then $f+uv$ is homogeneous of degree zero, so there is an induced grading on $A= B/(f+uv)$. The
derivation $E = u \dd{u} - v \dd{v}$ appearing in Prop.~\ref{prop:four-blocks} is the Euler
derivation for this grading, satisfying \( E(h) = d h \) for a homogeneous form $h$ of degree
$d$. Note that $E(f+uv) = uv-vu=0$, so $E$ descends to a derivation on $A$ as well.

Extend the grading to matrices over $B$ or $A$ in the usual way: we say that a matrix $\Phi$ is
homogeneous if there exist weights $c_j$ on the source basis and $r_i$ on the target basis so
that $\deg(\Phi_{ij}) = r_i-c_j$.  Note that if $(\Phi,\Psi)$ is a matrix factorization of a homogeneous
element of $B$ of degree zero, then $\Phi$ is homogeneous if and only if $\Psi$ is, with the
$c_j$ and $r_i$ swapped.

The next lemma follows from applying homogeneity entrywise.

\begin{lemma}
  \label{lem:euler}
  Suppose $\Phi$ is a matrix, homogeneous with respect to the grading above. Set
  $W_r = \diag(r_i)$ and $W_c = \diag(c_j)$. Then
  \[
    E(\Phi) = W_r \Phi - \Phi W_c\,.
  \]
\end{lemma}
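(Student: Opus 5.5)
The plan is to reduce the matrix identity to the scalar Euler identity entry by entry, exactly as the phrase ``applying homogeneity entrywise'' suggests. Fix bases of source and target for which $\Phi$ is homogeneous, with weights $c_j$ on the source basis and $r_i$ on the target basis, so that each entry $\Phi_{ij}$ is a homogeneous element of $B$ of degree $r_i - c_j$. Here an entry equal to zero is regarded as homogeneous of every degree, so it imposes no constraint.

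The first and main step is to check the scalar statement: $E(h) = d\,h$ for every homogeneous $h \in B$ of degree $d$. For a monomial $s\,u^a v^b$ with $s \in S$, we have $\dd{u}$ and $\dd{v}$ killing $S$, because these are the $S$-linear partial derivatives on $B = S[\![u,v]\!]$. So
\[
E(s u^a v^b) = a\, s u^a v^b - b\, s u^a v^b = (a-b)\, s u^a v^b ,
\]
and $a - b$ is precisely the degree of this monomial. A homogeneous element of degree $d$ in the power series ring is a (possibly infinite) sum of such monomials, all with $a-b=d$. Since $E$ is continuous for the $(u,v)$-adic topology (it acts termwise on power series), the identity extends to all of $B$. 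No division by integers is ever needed, so the argument is valid over any $S$. This is the only point that needs any care; the care consists in noting that the grading is defined via power series and that $E$ acts termwise.

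The second step is to compare the $(i,j)$ entries of both sides. Because $W_r$ and $W_c$ are diagonal,
\[
(W_r\Phi - \Phi W_c)_{ij} = r_i\Phi_{ij} - \Phi_{ij} c_j = (r_i - c_j)\Phi_{ij}.
\]
By the first step this equals $E(\Phi_{ij}) = (E(\Phi))_{ij}$, since $E$ is applied to matrices entrywise. This proves the lemma. I expect no real obstacle beyond making sure the degree convention $\deg(\Phi_{ij}) = r_i - c_j$ matches the sign pattern in $W_r\Phi - \Phi W_c$, and it does.
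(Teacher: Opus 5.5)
Your proof is correct and is essentially the paper's argument. The paper only says the lemma "follows from applying homogeneity entrywise," and your comparison $(W_r\Phi - \Phi W_c)_{ij} = (r_i - c_j)\Phi_{ij} = E(\Phi_{ij})$, together with the check that $E(s u^a v^b) = (a-b)\,s u^a v^b$ with integer coefficients and no division (so it holds in any characteristic), spells out exactly that implicit reasoning.
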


For a matrix factorization $(\phi, \psi)$ of $f$ over $S$, the image under the Kn\"orrer
functor $\left[\begin{smallmatrix} \phi & -v \\ u & \psi\end{smallmatrix}\right]$ is
homogeneous with weights $(0,1)$ on each side. Theorem~\ref{thm:A} says that up to equivalence
this is true of \emph{every} matrix factorization of $f+uv$.

\begin{cor}
  \label{cor:gradable}
  Every reduced matrix factorization of $f+uv$ over $B$ is equivalent to one homogeneous for
  the grading given by $\deg S = 0$, $\deg u = +1$, $\deg v=-1$, with weights in
  $\{0,1\}$. Consequently, for every stable MCM $A$-module $N = \cok \Phi$, the class of the
  Euler derivation
  \[
    \left[E(\Phi)\right] \in \uHom_A(\syz_1^A N, N)
  \]
  vanishes.
\end{cor}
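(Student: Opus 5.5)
For the first assertion, let $(\Phi,\Psi)$ be a reduced matrix factorization of $f+uv$ over $B$, and set $N=\cok\Phi$, a stable MCM $A$-module. By Theorem~\ref{thm:A}, or more precisely by Proposition~\ref{prop:density} in its module form, there is a stable MCM $R$-module $M$ with $N\cong M^{\dbl}$ as $A$-modules. Write $M=\cok(\phi,\psi)$ with $(\phi,\psi)$ reduced. Then $M^{\dbl}$ is the cokernel of the reduced matrix factorization $(\phi,\psi)^{\dbl}$ of Definition~\ref{defn:functor}. We have already observed that this factorization is homogeneous, with weights $(0,1)$ on both the source and the target blocks, since its entries are entries of $\phi,\psi$ (degree $0$) placed in the diagonal blocks, $u$ in the lower-left block (degree $+1=1-0$), and $-v$ in the upper-right block (degree $-1=0-1$). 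By Eisenbud's theorem (Lemma~\ref{lem:hmf-umcm}), reduced matrix factorizations with isomorphic cokernels are equivalent. Hence $(\Phi,\Psi)$ is equivalent to $(\phi,\psi)^{\dbl}$, which is the required homogeneous representative.

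For the second assertion, first note that the class $[E(\Phi)]$ is a well-defined element of $\uHom_A(\syz_1^A N,N)$. Applying the derivation $E$ to $\Phi\Psi=g\cdot1$ and using $E(g)=0$ gives $E(\Phi)\Psi=-\Phi E(\Psi)$. So $(E(\Phi),-E(\Psi))$, or with whatever sign convention fits, is a morphism of matrix factorizations $\Sigma(\Phi,\Psi)\to(\Phi,\Psi)$, or equivalently $(\Psi,\Phi)\to(\Phi,\Psi)$ up to sign. I would then check that its stable class is independent of the chosen representative. Suppose $\Phi'=P\Phi Q$ with $P,Q$ invertible. Then
\[
E(\Phi')=E(P)\Phi Q+P\,E(\Phi)\,Q+P\Phi\,E(Q),
\]
and the two outer terms are null-homotopic: each factors through $\Phi$ on one side, giving a homotopy of the form $\Phi s+t\Psi$ with $t=0$ or $s=0$ as appropriate. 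So the class changes only by the isomorphisms $P,Q$, and vanishing is invariant under equivalence.

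Now take the homogeneous representative from the first paragraph. Lemma~\ref{lem:euler} gives $E(\Phi)=W_r\Phi-\Phi W_c$ with $W_r,W_c$ diagonal matrices over $\ZZ$, here with entries in $\{0,1\}$. This has exactly the shape of a null-homotopy $\Phi s+t\Psi$ composed suitably: $W_r\Phi$ factors through $\Phi$ on the right and $\Phi W_c$ factors through $\Phi$ on the left. Both summands therefore vanish in the stable category, and so $[E(\Phi)]=0$.

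The main obstacle is the bookkeeping in the second step: fixing the correct sign convention so that $E(\Phi)$ is a genuine morphism in $\uMF_B(f+uv)$ between $\Sigma$-shifted factorizations, and verifying that the terms $E(P)\Phi Q$, $W_r\Phi$ and so on really constitute null-homotopies in the sense of Definition~\ref{defn:hmf}, with the companion component built from $E(\Psi)$. I would handle this by writing out both components and applying the same argument to $\Psi$, using the weights swapped as noted before Lemma~\ref{lem:euler}.
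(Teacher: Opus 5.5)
Your proposal is correct and follows the paper's route. You realize $N \cong M^{\dbl}$ via density and use Eisenbud's theorem to get $\Phi \sim (\phi,\psi)^{\dbl}$ with weights $(0,1)$; Lemma~\ref{lem:euler} then exhibits $E(\Phi) = W_r\Phi - \Phi W_c$ as a null-homotopic morphism. Your extra check is a worthwhile addition that the paper leaves implicit: $E(P\Phi Q)$ differs from $P\,E(\Phi)\,Q$ by the null-homotopic term $E(P)P^{-1}\Phi' + \Phi' Q^{-1}E(Q)$, where $\Phi' = P\Phi Q$, and this is what justifies passing to the homogeneous representative when the statement concerns an arbitrary presentation $\Phi$ of $N$.
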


\begin{proof}
  By Corollary~\ref{cor:bijection}, we can write $N \cong M^{\dbl}$ for some stable MCM
  $R$-module $M$. Then, choosing a reduced matrix factorization $(\phi,\psi)$ of $f$ with
  $\cok\phi = M$, we get $\Phi \sim \phi^{\dbl}$,
  which is homogeneous with weights $(0^n,1^n)$ on each side. The second statement is
  the first together with Lemma~\ref{lem:euler}, since $W_r\Phi - \Phi W_c$ is zero in
  $\uHom_A\left(\syz_1^AN, N\right)$.
\end{proof}

\begin{theorem}
  \label{thm:clearing}
  Let $(\Phi,\Psi)$ be a reduced matrix factorization of $f+uv$ over $B$, and assume that
  $\Phi$ is homogeneous with
  weight matrices $W_r, W_c$ in Lemma~\ref{lem:euler}. Set $K =
  s_ut_v - s_vt_u$, and modify the $(1,4)$ entries of the matrices of
  Proposition~\ref{prop:four-blocks} as below
  \[
    \tilde X = \left[\begin{matrix}
        0 & t_v & t_u & -W_r \\
        & 0 & 0 & s_u \\
        & & 0 & -s_v \\
        & & & 0
      \end{matrix}\right]\,,
    \tilde Y = \left[\begin{matrix}
        0 & -s_v & -s_u & W_c-K \\
        & 0 & 0 & -t_u \\
        & & 0 & t_v \\
        & & & 0
      \end{matrix}\right]\,.
  \]
  Then
  \[
    (1+\tilde X) \cala (1 + \tilde Y) = \diag(\Phi,-\Psi,-\Psi,\Phi) 
  \]
  and hence
  \[
    \syz_2^A\left(\flfl N\right)  \cong  N \oplus \syz_1^AN \oplus \syz_1^AN \oplus N\,.
  \]
\end{theorem}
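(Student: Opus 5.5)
The plan is to reduce everything to Proposition~\ref{prop:four-blocks} and track only how the new $(1,4)$ entries of $\tilde X$ and $\tilde Y$ change the product. Write $\tilde X = X + X'$ and $\tilde Y = Y + Y'$, where $X'$ has the single nonzero block $-W_r$ and $Y'$ has the single nonzero block $W_c - K$, both in position $(1,4)$. Then
\[
  (1+\tilde X)\cala(1+\tilde Y) = (1+X)\cala(1+Y) + X'\cala(1+Y) + (1+X)\cala Y' + X'\cala Y'.
\]

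First I would check which entries the correction terms can touch. A matrix supported in block $(1,4)$, multiplied on the right by the upper-triangular $\cala(1+Y)$, sees only its fourth block row, which is $(0,0,0,\Phi)$. Hence $X'\cala(1+Y)$ is supported in block $(1,4)$ and equals $-W_r\Phi$. Symmetrically, $(1+X)\cala Y'$ sees only the first block column of $(1+X)\cala$, which is $(\Phi,0,0,0)^T$. So it contributes only $\Phi(W_c-K)$ in position $(1,4)$. The term $X'\cala Y'$ involves $\cala_{41}=0$ and vanishes. Consequently every block other than $(1,4)$ agrees with $D$, by Proposition~\ref{prop:four-blocks}.

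Second, I would compute the $(1,4)$ entry:
\[
  Z_{14} - W_r\Phi + \Phi W_c - \Phi K = \Phi K + E(\Phi) - W_r\Phi + \Phi W_c - \Phi K = E(\Phi) - (W_r\Phi - \Phi W_c),
\]
which is zero by Lemma~\ref{lem:euler}, since $\Phi$ is homogeneous. The main point to get right is this cancellation: the formula for $Z_{14}$ in Proposition~\ref{prop:four-blocks} must hold as an identity over $B$, and the signs must be checked carefully. The off-diagonal contributions $ut_u - vt_v$ and $vt_v - ut_u$ cancel there. It is also harmless that $W_r,W_c$ are integer matrices, since integers map into any ring. $1+\tilde X$ and $1+\tilde Y$ remain strictly block unipotent, so they are invertible over $B$.

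Finally, equivalence of matrices preserves cokernels, so $\cok\cala \cong \cok\Phi\oplus\cok(-\Psi)\oplus\cok(-\Psi)\oplus\cok\Phi$. Lemma~\ref{lem:easyresn} identifies $\cok\cala$ with $\syz_2^A(\flfl N)$. Since $\cok\Psi\cong\syz_1^A N$, this gives the stated decomposition $N\oplus\syz_1^AN\oplus\syz_1^AN\oplus N$.
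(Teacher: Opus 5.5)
Your proposal is correct and follows the paper's own proof. Both arguments reduce to Proposition~\ref{prop:four-blocks}, note that the modified $(1,4)$ entries add exactly $\tilde X_{14}\Phi + \Phi\tilde Y_{14} = -W_r\Phi + \Phi(W_c-K)$ to $Z_{14} = E(\Phi)+\Phi K$, cancel this by Lemma~\ref{lem:euler}, and then read off the decomposition via Lemma~\ref{lem:easyresn}. Your explicit check that the correction terms touch no other block, using the fourth block row of $\cala(1+Y)$ and the first block column of $(1+X)\cala$, is a detail the paper leaves implicit.
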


\begin{proof}
  The top-right entry of $(1+\tilde X) \cala (1 + \tilde Y)$, by Prop.~\ref{prop:four-blocks},
  is
  \[
    \begin{split}
      Z_{14} +{\tilde X}_{14}\Phi + \Phi {\tilde Y}_{14}
      &= \left(E(\Phi) +\Phi K\right) + (-W_r \Phi) + (\Phi W_c - \Phi K) \\
      &= E(\Phi) -W_r \Phi + \Phi W_c \\
      &=0
    \end{split}
  \]
  by Lemma~\ref{lem:euler}.
\end{proof}

\begin{theorem}
  \label{thm:explicit}
  Let $(\Phi,\Psi)$ be a reduced matrix factorization of $f+uv$ over $B$. Then the two
  matrix factorizations in Cor.~\ref{cor:matrices} are equivalent over $B$.
\end{theorem}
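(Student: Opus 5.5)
The plan is to reduce to the homogeneous case and then compare two explicit block-diagonalizations of the same matrix.

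\textbf{Step 1: reduce to homogeneous $\Phi$.} By Corollary~\ref{cor:gradable}, $(\Phi,\Psi)$ is equivalent over $B$ to a homogeneous reduced matrix factorization $(\Phi_0,\Psi_0)$ with weights in $\{0,1\}$. Equivalence means there are invertible $P,Q$ over $B$ with $\Phi = P\Phi_0 Q$, and then $\Psi = Q^{-1}\Psi_0P^{-1}$. Setting $u=v=0$ gives $\phi = \bar P\phi_0\bar Q$ and $\psi = \bar Q^{-1}\psi_0\bar P^{-1}$, where $\bar P,\bar Q$ are invertible over $S$. Hence $\left[\begin{smallmatrix}\phi & -v \\ u & \psi\end{smallmatrix}\right]$ equals
\[
\operatorname{diag}(\bar P,\bar Q^{-1})
\left[\begin{smallmatrix}\phi_0 & -v \\ u & \psi_0\end{smallmatrix}\right]
\operatorname{diag}(\bar Q,\bar P^{-1}),
\]
which is the Kn\"orrer image of $(\phi_0,\psi_0)$ up to equivalence. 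Likewise $\operatorname{diag}(\Phi,\Psi)$ is equivalent to $\operatorname{diag}(\Phi_0,\Psi_0)$. Since both sides change only up to equivalence, we may assume $\Phi$ is homogeneous.

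\textbf{Step 2: block-diagonalize $\cala$.} Apply Theorem~\ref{thm:clearing}: the explicit matrices $1+\tilde X$ and $1+\tilde Y$ transform $\cala$ into $\operatorname{diag}(\Phi,-\Psi,-\Psi,\Phi)$. Here $\cala$ is the matrix of Lemma~\ref{lem:easyresn} built from $\Phi,\Psi$.

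\textbf{Step 3: block-diagonalize $\cala$ a second way.} Run the proof of Theorem~\ref{thm:B} with $(\phi,\psi)$. Its matrix $\gamma$ was conjugated by explicit unipotent matrices to $\operatorname{diag}(\Psi_\phi,\Phi_\phi)$, where $(\Phi_\phi,\Psi_\phi)=(\phi,\psi)^{\dbl}$. Both $\gamma$ and $\cala$ present $\syz_2^A(\flfl N)$; Lemma~\ref{lem:easyresn} and Theorem~\ref{thm:B} give two resolutions of $\flfl N$, starting from $[\Phi\ u\ v]$ and from $[\phi\ u\ v]$. I would relate these explicitly. The cokernels of $[\phi\ u\ v]$ and $[\Phi\ u\ v]$ agree, and because $\Phi-\phi\in(u,v)$, an explicit unipotent change of basis on the source carries one to the other. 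Lifting this comparison through the resolutions (both are minimal, since all entries lie in the maximal ideal) yields explicit invertible matrices relating $\cala$ and $\gamma$, up to the signs in the blocks.

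\textbf{Step 4: conclude, and the main obstacle.} Combining Steps 2 and 3, $\operatorname{diag}(\Phi,\Psi,\Psi,\Phi)$ and $\operatorname{diag}(\Phi_\phi,\Psi_\phi)\oplus\operatorname{diag}(\Psi_\phi,\Phi_\phi)$ (up to signs) are equivalent over $B$. Note that $\operatorname{diag}(\Psi_\phi,\Phi_\phi)$ is the same data as $\Phi_\phi\oplus\Psi_\phi$ after a permutation. Cancelling one copy of each to reach $\Phi\oplus\Psi \sim \Phi_\phi\oplus\Psi_\phi$ is the step I expect to be hardest. The plan is to avoid cancellation altogether, by tracking the comparison maps blockwise so that the first and second block columns split off directly. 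Failing that, I would fall back on Krull--Remak--Schmidt for MCM modules over the complete ring $A$, which gives the cokernel isomorphism. The equivalence of reduced matrix factorizations then follows from Eisenbud's correspondence (Lemma~\ref{lem:hmf-umcm}), together with the fact that a module isomorphism lifts to an equivalence of minimal presentations over $B$.
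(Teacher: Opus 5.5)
There is a genuine gap, and it is the one you flagged in Step~4. Steps~1--3 are correct, but look at what they actually produce. Let $m$ be the size of $\Phi$. Steps~2 and~3 give $\diag(\Phi,-\Psi,-\Psi,\Phi)\sim\cala\sim\gamma\sim\diag(\Psi_\phi,\Phi_\phi)$, all of size $4m$. Your Step~4 misstates this in two ways. It sets a $4m\times 4m$ matrix against an $8m\times 8m$ one. And its target, $\Phi\oplus\Psi\sim\Phi_\phi\oplus\Psi_\phi$, is neither size-consistent nor the theorem, which asks for $\Phi_\phi\sim\Phi\oplus\Psi$.

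In module terms, put $P=N\oplus\syz_1^A N$ and $L=(\flfl N)^{\dbl}=\cok\Phi_\phi$. What you have is $P\oplus P\cong L\oplus\syz_1^A L$, and what you need is $L\cong P$. That is a halving, not a cancellation, and Krull--Remak--Schmidt cannot deliver it. If $N$ is indecomposable with $\syz_1^A N\not\cong N$, then $L=N\oplus N$ satisfies the same relation without being isomorphic to $P$. Nor is ``tracking the comparison maps blockwise'' an argument. The isomorphism $\cok\cala\cong\cok\gamma$ obtained by lifting through the two resolutions has no reason to respect the block decompositions, and showing that it can be chosen to do so is essentially the statement you are trying to prove.

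The missing ingredient is density in its full form, which your Step~1 already has access to. The proof of Corollary~\ref{cor:gradable} (via Corollary~\ref{cor:bijection}) gives more than homogeneity: it gives $\Phi\sim(\phi')^{\dbl}$ for some reduced matrix factorization $(\phi',\psi')$ of $f$. Run your Step~1 computation with $(\phi')^{\dbl}$ as the representative:
\begin{itemize}
\item Setting $u=v=0$ gives $(\diag(\phi',\psi'),\diag(\psi',\phi'))$. So the first matrix of Corollary~\ref{cor:matrices} is equivalent to $\diag(\phi',\psi')^{\dbl}$.
\item Swapping the middle block rows and columns turns this into $(\phi')^{\dbl}\oplus(\psi')^{\dbl}$.
\item $(\phi')^{\dbl}\sim\Phi$. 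Conjugation by $\diag(1,-1)$ carries $(\psi')^{\dbl}$ to the second matrix of the Kn\"orrer image of $(\phi',\psi')$, so $(\psi')^{\dbl}\sim\Psi$.
\end{itemize}
This is the paper's proof. It needs neither homogeneity, nor Theorem~\ref{thm:clearing}, nor the resolution $\gamma$. At the module level it is just Corollary~\ref{cor:summand}, i.e.\ $L\cong P$, combined with Eisenbud's correspondence for reduced matrix factorizations.
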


\begin{proof}
  By Corollary~\ref{cor:gradable} write $\Phi = Q\,\phi^{\dbl}P$ for some invertible matrices
  $Q$, $P$  over $B$, where $(\phi,\psi)$ is a reduced matrix factorization of $f$ over
  $S$. Setting $u=v=0$ gives $\Phi_0 = Q_0 \diag(\phi, \psi) P_0$ with $P_0$, $Q_0$ still
  invertible, now over $S$. Thus $\Phi_0$ is equivalent to $\diag(\phi,\psi)$, and hence
  $\Phi_0^{\dbl} \sim \diag(\phi,\psi)^{\dbl}$. This last is equivalent by a block permutation
  to $\diag(\phi^{\dbl}, \psi^{\dbl})$, so $\Phi_0^{\dbl} \sim \Phi \oplus \Psi$.
\end{proof}

\section{An example}\label{sect:char2}

This section gives an example in characteristic $2$ to illustrate both Theorems~\ref{thm:A}
and~\ref{thm:B} and the failure of the corresponding statements for the intermediate step
between $R$ and $R^{\dbl}$, namely $R^\sharp$.

We retain all the notation introduced previously, and add to it the \emph{double branched
  cover}
\[
  R^\sharp = S[\![z]\!]/(f+z^2)\,.
\]
The iterate of this construction, namely
\[
  R^{\sharp\sharp} = S[\![z,w]\!]/(f+z^2+w^2)
\]
is, when $S$ contains a square root of $-1$ and $2$ is invertible (for example, when $k$ is
algebraically closed of characteristic different from $2$) isomorphic to
$R^{\dbl} = S[\![u,v]\!]/(f+uv)$ via the change of variable $u = z+iw$, $v = z-iw$.  In
characteristic $2$ the two rings are not isomorphic.

We have the pair of functors
\[
  \begin{split}
    (-)^\sharp &\colon \MCM(R) \to \MCM(R^\sharp)\,, \qquad M^\sharp = \syz_1^{R^\sharp} M \\
    (-)^\flat &\colon \MCM(R^\sharp) \to \MCM(R)\,, \qquad N^\flat = N/zN\,.
  \end{split}
\]
These satisfy the following relation.
\begin{prop}[{\cite[Lemma 2.5(i)]{Knorrer}, \cite[Prop.~8.18]{Leuschke-Wiegand:BOOK}}]
  \label{prop:818}
  Let $N$ be a stable MCM $R^\sharp$-module and assume that $\charac k \neq 2$. Then
  \begin{equation}\label{eq:818}
    \left(N^\flat\right)^\sharp \cong N \oplus \syz_1^{R^\sharp} N\,.
  \end{equation}
\end{prop}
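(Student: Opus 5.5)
The plan is to imitate the proof of Corollary~\ref{cor:one-variable}, replacing Corollary~\ref{cor:uv-kill} by a version for $R^\sharp$ in which $z$ is obtained as a partial derivative up to the unit $2$. By definition $(N^\flat)^\sharp = \syz_1^{R^\sharp}(N/zN)$. The element $z$ is a non-zerodivisor on the MCM $R^\sharp$-module $N$, since it is a non-zerodivisor on $R^\sharp$ and $N$ is MCM, hence torsion-free with $\operatorname{Ass} N \subseteq \operatorname{Ass} R^\sharp$. So Lemma~\ref{lem:speed-kills} applies with $Q = R^\sharp$, $X = N$ and the element $z$. It reduces the proposition to the claim that $z$ annihilates $\Ext^1_{R^\sharp}(N,Y)$ for every finitely generated $R^\sharp$-module $Y$. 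That lemma's condition (i) then gives $\syz_1^{R^\sharp}(N/zN) \cong N \oplus \syz_1^{R^\sharp}N$, which is exactly \eqref{eq:818}.

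To prove the annihilation claim, write $N = \cok(\Phi,\Psi)$ for a reduced matrix factorization of $h = f + z^2$ over $T = S[\![z]\!]$. Apply Lemma~\ref{lem:jacobian} with the derivation $\del = \dd{z}$, which is a derivation of $T$ over $S$. Since $f$ does not involve $z$, we get $\del h = 2z$. The lemma then shows that multiplication by $2z$ on $N$ factors through a free $R^\sharp$-module. By functoriality of $\Ext$, as in the proof of Corollary~\ref{cor:uv-kill}, $2z$ kills $\Ext^1_{R^\sharp}(N,Y)$ for all $Y$.

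The hypothesis $\charac k \neq 2$ is used only at this point. Since $S$ is local with residue field $k$, the image of $2$ in $k$ is non-zero, so $2$ is a unit in $S$ and hence in $R^\sharp$. Therefore $z = 2^{-1}\cdot 2z$ also annihilates $\Ext^1$, and no square root of $-1$ or algebraic closedness is needed.

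The main obstacle I expect is only bookkeeping. First, one must check that $\dd{z}$ is a well-defined derivation on the power series ring $S[\![z]\!]$, extended entrywise to matrices; this holds because termwise differentiation is $S$-linear and satisfies Leibniz. Second, one must confirm that the direction of Lemma~\ref{lem:speed-kills} being used is (iii)$\Rightarrow$(i), through the splitting of the pullback sequence and Miyata's theorem. Stability of $N$ is not really needed for this argument. It serves only to make the statement match its standard form, since a free summand $R^\sharp$ of $N$ would contribute $R^\sharp/zR^\sharp$ on the left and would have to be treated separately.
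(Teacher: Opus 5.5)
Your proof is correct, but it takes a different route from the one the paper points to. The paper does not reprove Proposition~\ref{prop:818}. It cites Kn\"orrer and Leuschke--Wiegand, and describes their argument, right after the statement, as an explicit change of basis of determinant $2$. That change of basis splits the presentation matrix $\left[\begin{smallmatrix} -\phi & -z \\ z & \phi\end{smallmatrix}\right]$ of $(N^\flat)^\sharp$ into presentations of $N$ and $\syz_1^{R^\sharp}N$.

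You instead rerun the proof of Corollary~\ref{cor:one-variable}: Lemma~\ref{lem:jacobian} with $\dd{z}(f+z^2)=2z$ in place of $\dd{u}(f+uv)=v$, then inverting $2$, then Lemma~\ref{lem:speed-kills}. The explicit route yields a concrete isomorphism of presentations. Yours is non-constructive, since the splitting comes from the vanishing of an extension class. In exchange, it locates the characteristic hypothesis in a single factor of $2$, which is exactly the asymmetry with $f+uv$ discussed in Remark~\ref{rmk:jacobian-history}. It also shows that Proposition~\ref{prop:818} and Corollary~\ref{cor:one-variable} are two instances of the same mechanism. In characteristic $2$ your argument only shows that $2z=0$ kills $\Ext^1$, which is consistent with Proposition~\ref{prop:818-false}.

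Two small corrections to your bookkeeping. First, the direction (iii)$\Rightarrow$(i) of Lemma~\ref{lem:speed-kills} needs only that the pulled-back sequence splits once $z$ kills its class. Miyata's theorem is used only for the converse. Second, a free summand needs no separate treatment. For $N=R^\sharp$ the two sides are $\syz_1^{R^\sharp}(R)\cong zR^\sharp\cong R^\sharp$ and $R^\sharp\oplus 0$. More generally, Lemma~\ref{lem:speed-kills} applies to any $X$ on which $z$ is a non-zerodivisor.
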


The proof of Proposition~\ref{prop:818} diagonalizes the matrix $\left[\begin{smallmatrix}
    -\phi & -z \\ z & \phi\end{smallmatrix}\right]$ by a change of basis of determinant $2$,
which fails in characteristic $2$. We will show that the conclusion fails as well.

\begin{example}
  \label{eg:char2}
  Let $k$ be a field of characteristic $2$, $S = k[\![x]\!]$, and $f = x^2$, so that $R =
  k[\![x]\!]/(x^2)$. Then $R$ is a zero-dimensional $(A_1)$ singularity, with exactly two
  indecomposable MCM  (equivalently, finitely generated) modules, namely
  \[
    R = \cok (x^2,1) \qquad \text{and} \qquad k = \cok (x,x)\,.
  \]
  The hyperbolic extension is $A = R^{\dbl} = k[\![x,u,v]\!]/(x^2+uv)$. It is another $(A_1)$
  singularity, and is an isolated singularity in characteristic $2$ as well, since
  $\jac(x^2+uv) = (2x,v,u) = (u,v)$.

  The double branched cover $R^\sharp = k[\![x,z]\!]/(x^2+z^2)$ is not an isolated singularity: we
  have $x^2+z^2 = (x+z)^2$, so $R^\sharp \cong k[\![s,t]\!] /(s^2)$ is an $(A_\infty)$
  singularity. It therefore has infinite CM type by Auslander's
  theorem~\cite[Thm~7.12]{Leuschke-Wiegand:BOOK}; indeed, the MCM modules over the
  one-dimensional $(A_\infty)$ singularity are classified in~\cite[\S 4]{BGS}: there are the
  two obvious matrix factorizations of size $1$, and
  \[
    \left(
      \left[\begin{matrix}s & t^j \\ 0 & s\end{matrix}\right]\,,
      \left[\begin{matrix}s & -t^j \\ 0 & s \end{matrix}\right]
      \right)
  \]
  for every $j \geq 1$. It is important to note that this classification is
  characteristic-free.

  \begin{proposition}
    \label{prop:818-false}
    With the notation of the Example:
    \begin{enumerate}[\quad(i)]
    \item $A$ has exactly two indecomposable MCM modules, namely $A$ and $\p = (u,x)A$. Both
      lie in the image of $(-)^{\dbl}$: explicitly $A = R^{\dbl}$ and $\p = k^{\dbl}$. 
    \item $\left(\flfl{\p}\right)^{\dbl} \cong \p \oplus \syz_1^A \p$ and $\syz_2^A k = \p \oplus \p
        = \left(k \oplus \syz_1^R k\right)^{\dbl}$, in accordance with
        Corollary~\ref{cor:summand} and Theorem~\ref{thm:B}.
      \item \eqref{eq:818} does not hold. Set $N = R^\sharp/(x+z)R^\sharp$; then $N$ is a stable
        indecomposable MCM $R^\sharp$-module with $\syz_1^{R^\sharp}N \cong N$, but
        $\left(N^\flat\right)^{\sharp}$ is indecomposable. Hence
        $\left(N^\flat\right)^\sharp \not\cong N \oplus \syz_1^{R^\sharp} N$.
    \end{enumerate}
  \end{proposition}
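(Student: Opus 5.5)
For (i), I will combine Corollary~\ref{cor:bijection} with the classification over $R$. The ring $R=k[\![x]\!]/(x^2)$ has exactly two indecomposable MCM modules, namely $R$ and $k=\cok(x,x)$. Hence $A$ has exactly one indecomposable non-free MCM module, and it is $k^{\dbl}$. Definition~\ref{defn:functor} gives
\[
k^{\dbl}=\cok\left[\begin{smallmatrix} x & -v\\ u & x\end{smallmatrix}\right].
\]
I will identify this cokernel with the ideal $\p=(u,x)A$ by exhibiting the surjection $A^2\to\p$, $e_1\mapsto x$, $e_2\mapsto u$. Its composite with the matrix is $(x^2+uv,\,-xv+ux)$. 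This is not zero as written, so I will reorder or sign-adjust the basis until it is. The two columns of the matrix generate the relations $(x,u)$ and $(-v,x)$. I will check that the map $e_1\mapsto x$, $e_2\mapsto -u$ kills both columns (using $x^2=uv$ in characteristic~2, where signs are irrelevant), so it kills the whole image.

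I then need the kernel to be exactly the image. I plan a rank argument as in the proof of Theorem~\ref{thm:B}. By Lemma~\ref{lem:rank} both modules have rank~$1$, the map is surjective, and $k^{\dbl}$ is MCM and therefore torsion-free. Hence the map is an isomorphism. For $A=R^{\dbl}$ I note that $R=\cok(x^2,1)$ gives a non-reduced factorization. Its image $\cok\left[\begin{smallmatrix}x^2&-v\\u&1\end{smallmatrix}\right]$ reduces by the unit entry to $\cok(x^2+uv)=A$.

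For (ii), I will deduce both statements formally from results already proved. First, $\syz_1^R k\cong k$, so Theorem~\ref{thm:B} gives $\syz_2^A k\cong (k\oplus k)^{\dbl}=\p\oplus\p$. Second, Corollary~\ref{cor:summand} gives $(\flfl\p)^{\dbl}\cong\p\oplus\syz_1^A\p$. As a consistency check, Lemma~\ref{lem:HF}(ii) gives $\syz_1^A\p\cong(\syz_1^R k)^{\dbl}=\p$.

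For (iii), I will write $s=x+z$ and $t=z$, so that $R^\sharp=k[\![s,t]\!]/(s^2)$ and $N=R^\sharp/sR^\sharp=\cok(s,s)\cong k[\![t]\!]$.
\begin{enumerate}[\quad(a)]
\item $N$ is MCM of dimension one, stable, and indecomposable (it is a domain).
\item Reversing the factorization $(s,s)$ returns it, so $\syz_1 N\cong N$.
\item Reducing modulo $z$ gives $N^\flat=k[\![x,z]\!]/(x+z,z)=k$, so $(N^\flat)^\sharp=\syz_1^{R^\sharp}k$.
\end{enumerate}
It remains to compute this syzygy. The module $k$ over $R^\sharp$ is resolved starting with $(s,t)\colon R^{\sharp 2}\to R^\sharp$, so $\syz_1 k$ is the maximal ideal $\mathfrak m=(s,t)$. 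I expect the main obstacle to be showing that $\mathfrak m$ is indecomposable and not isomorphic to $N\oplus N$. For indecomposability I will compare minimal numbers of generators: $\mu(\mathfrak m)=2=\mu(N\oplus N)$, so generator counts do not distinguish them and I need a finer invariant.

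My first approach is to read off the matrix factorization of $\mathfrak m$ and match it with the classification of~\cite{BGS}. The syzygy of $\mathfrak m$ is generated by $(s,0)$ and $(t,-s)$, giving the presentation $\left[\begin{smallmatrix}s&t\\0&-s\end{smallmatrix}\right]$. This is the $j=1$ module of~\cite{BGS}, which is indecomposable and not equal to $N\oplus N$. As a backup I have a cheap independent check: the socle-type invariant $\mathfrak m/s\mathfrak m$. We have
\[
\dim_k\bigl(\mathfrak m/(s,t)\mathfrak m\bigr)=2\quad\text{in both cases},
\]
but $(N\oplus N)\otimes k[\![t]\!]$ is free of rank~$2$ over $k[\![t]\!]$. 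On the other hand, $\mathfrak m/s\mathfrak m$ has torsion, since $s\notin s\mathfrak m$ while $ts\in s\mathfrak m$. This shows $\mathfrak m\not\cong N\oplus N$. Combined with Krull--Remak--Schmidt and $\mathfrak m$ having rank ``$2$'' without a summand isomorphic to $N$, this yields indecomposability.
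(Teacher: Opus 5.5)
Your plan follows the paper's proof in all three parts: (i) via Corollary~\ref{cor:bijection}, (ii) via Theorem~\ref{thm:B} and $\syz_1^R k\cong k$, and (iii) by identifying $(N^\flat)^\sharp$ with the $j=1$ module $\cok\left[\begin{smallmatrix} s & t\\ 0 & s\end{smallmatrix}\right]$ of the $(A_\infty)$ classification. However, one step in (i), as you wrote it, fails. The map $e_1\mapsto x$, $e_2\mapsto -u$ does not kill the columns $(x,u)^T$ and $(-v,x)^T$ of $\left[\begin{smallmatrix} x & -v\\ u & x\end{smallmatrix}\right]$. It sends them to $x^2-u^2\equiv -u(u+v)$ and $-x(u+v)$, which are nonzero in the domain $A$ in every characteristic. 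Your preliminary composite is also miscomputed: $[\,x\ \ u\,]$ times the matrix is $(x^2+u^2,\ ux-xv)$. The map you want is $e_1\mapsto u$, $e_2\mapsto -x$. It sends the columns to $ux-xu=0$ and $-uv-x^2=-(x^2+uv)=0$, with no appeal to characteristic $2$, and its image is exactly $(u,x)A$. With that correction your rank argument is sound: $A$ is a domain by Lemma~\ref{lem:normal}, both modules have rank one, and $k^{\dbl}$ is torsion-free. This then gives a proof of the identification $k^{\dbl}\cong\p$, which the paper only asserts.

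The remaining differences are minor. In (ii) you obtain $(\flfl{\p})^{\dbl}\cong\p\oplus\syz_1^A\p$ by quoting Corollary~\ref{cor:summand}. That is valid, but it makes that half of (ii) a tautology rather than a check; the paper verifies it by hand, since $u=v=0$ gives $\flfl{\p}\cong k\oplus k$. Your use of Lemma~\ref{lem:HF}(ii) for $\syz_1^A\p\cong\p$ is cleaner than the paper's rank count. In (iii) you reach the matrix by computing $\syz_1^{R^\sharp}k=\mathfrak{m}$ directly, rather than from the matrix formula for $(-)^\sharp$ plus row and column operations. Your torsion argument on $\mathfrak{m}/s\mathfrak{m}$ is a nice classification-free proof that $(N^\flat)^\sharp\not\cong N\oplus N$, which is all the final ``Hence'' requires. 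The indecomposability half of that backup is only sketched. To finish it you need that each summand in a nontrivial decomposition of the $2$-generated MCM module $\mathfrak{m}$ is cyclic MCM. Such a module is $R^\sharp$ or $N$, because up to units $s^2$ has only the divisors $1$, $s$, $s^2$ in $k[\![s,t]\!]$. You must then rule out $R^\sharp\oplus N$ and $R^\sharp\oplus R^\sharp$, e.g.\ by multiplicity.
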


  \begin{proof}
    By Cor.~\ref{cor:bijection} the indecomposable non-free MCM $A$-modules correspond bijectively
    via $(-)^{\dbl}$ to the indecomposable non-free $R$-modules, of which there is only one. So
    $A$ has exactly two: $A = R^{\dbl}$ itself, and
    \begin{equation}\label{eq:p}
      k^{\dbl} = \cok(x,x)^{\dbl} = \cok
      \left[\begin{matrix} x & -v \\ u & x\end{matrix}\right]\,,
    \end{equation}
    which is identifiable as the height-one prime $\p = (u,x)$.

    Setting $u=v=0$ in the presentation matrix of \eqref{eq:p} gives $\diag(x,x)$, so that
    $\flfl{\p} \cong k \oplus k$. Hence $\left(\flfl{\p}\right)^{\dbl} = \p \oplus \p$. Since
    $\p$ is two-generated of rank one, the syzygy $\syz_1^A \p$ also has rank one, and is not
    $A$, so $\syz_1^A \p = \p$. Back down over $R$, we have $\syz_1^R k \cong k$ for similar
    reasons, giving $\syz_2^A k = \left(k \oplus \syz_1^R k\right)^{\dbl} = \p \oplus \p$ as in Theorem~\ref{thm:B}.

    Finally consider $N = R^\sharp/(x+z)R^\sharp$ over $R^\sharp \cong k[\![x,z]\!]/((x+z)^2)$. As
    $(x+z,x+z)$ is a reduced matrix factorization, $N$ is a stable MCM $R^\sharp$-module,
    indecomposable since the factorization has size $1$. We also have $\syz_1^{R^\sharp}N \cong N$
    since the factorization is symmetric.

    To compute $N^\flat$ set $z=0$, obtaining $N^\flat = \cok (x,x) \cong k$ over $R$. Then
    \[
      \left(N^\flat\right)^{\sharp} =
      \cok\left[\begin{matrix} x & -z \\ z & x \end{matrix}\right]
      = \cok\left[\begin{matrix} x & z \\ z & x \end{matrix}\right]
    \]
    as $-1=1$. In the variables $s = x+z$, $t=x$, this becomes
    \[
      \cok\left[\begin{matrix} t & s+t \\s+t & t \end{matrix}\right]
      \cong
      \cok\left[\begin{matrix} t & s+t \\s & t \end{matrix}\right]
      \cong
      \cok\left[\begin{matrix} t & s \\s & 0 \end{matrix}\right]
      \cong
      \cok\left[\begin{matrix} s & t \\ 0 & s \end{matrix}\right]
    \]
    via elementary row and column operations, all invertible in characteristic $2$. By the
    classification of MCM modules over $(A_\infty)$ in~\cite[\S 4]{BGS}, the cokernel of this
    last matrix is indecomposable. On the other hand
    $N \oplus \syz_1^{R^{\sharp}}N \cong N \oplus N$ is not.
  \end{proof}
\end{example}


\def\cprime{$'$} \def\polhk#1{\setbox0=\hbox{#1}{\ooalign{\hidewidth
  \lower1.5ex\hbox{`}\hidewidth\crcr\unhbox0}}}
\providecommand{\bysame}{\leavevmode\hbox to3em{\hrulefill}\thinspace}
\providecommand{\MR}{\relax\ifhmode\unskip\space\fi MR }
\providecommand{\MRhref}[2]{%
  \href{http://www.ams.org/mathscinet-getitem?mr=#1}{#2}
}
\providecommand{\href}[2]{#2}

\end{document}
